\title{Long-range last-passage percolation on the line}
\author{Sergey Foss, James B.\ Martin and Philipp Schmidt}
\date{December 29, 2012}
\newtheorem{theorem}{Theorem}[section]
\newtheorem{lemma}[theorem]{Lemma}
\newtheorem{proposition}[theorem]{Proposition}
\theoremstyle{definition}
\theoremstyle{remark}
\newtheorem{remark}{Remark}[section]
\newtheorem{example}{Example}[section]
\newcommand{\EE}{{\mathbb{E}}}
\newcommand{\PP}{{\mathbb{P}}}
\newcommand{\ZZ}{{\mathbb{Z}}}
\newcommand{\wtw}{{\widetilde{w}}}
\newcommand{\cR}{{\mathcal{R}}}
\newcommand{\cU}{{\mathcal{U}}}
\newcommand{\cA}{{\mathcal{A}}}
\newcommand{\bare}{{\bar{e}}}
\newcommand{\var}{{\operatorname{Var}}}
\newcommand{\essinf}{{\operatorname{ess\,inf}}}
\newcommand{\Axpp}{{A_x^{\scriptscriptstyle{++}}}} 
\newcommand{\Axmp}{{A_x^{\scriptscriptstyle{-+}}}} 
\newcommand{\Axmm}{{A_x^{\scriptscriptstyle{--}}}}
\begin{document}
\maketitle 
\abstract

We consider directed last-passage percolation on the random graph $G =
(V,E)$ where $V = \mathbb{Z}$ and each edge $(i,j)$, for $i < j \in
\mathbb{Z}$, is present in $E$ independently with some probability $p
\in \left( \left. 0,1 \right] \right.$. To every $(i,j) \in E$ we
attach i.i.d.\ random weights $v_{i,j} > 0$. We are interested in
the behaviour of $w_{0,n}$, which is the maximum weight of all
directed paths from $0$ to $n$, as $n \rightarrow \infty$. We 
see two very different types of behaviour, depending on 
whether $\mathbb{E} \left[v_{i,j}^{2} \right] < \infty$ or 
$\mathbb{E} \left[ v_{i,j}^{2}
\right] = \infty$. In the case where $\mathbb{E} \left[ v_{i,j}^{2}
\right] < \infty$ we show that the process has a certain
regenerative structure, and prove a strong law of large numbers and,
under an extra assumption, a functional central limit theorem.
In the situation where
$\mathbb{E} \left[ v_{i,j}^{2} \right] = \infty$ we obtain scaling
laws and asymptotic distributions expressed in terms of a ``continuous
last-passage percolation'' model on $[0,1]$; these are related to 
corresponding results for two-dimensional last-passage percolation
with heavy-tailed weights obtained in \cite{hamblymartin}.
\newline {\scshape Keywords:} Last-passage percolation,
directed random graph, regenerative structure, regular
variation, heavy tails
\newline {\scshape AMS 2000 Mathematics Subject Classification: 60K35,
  05C80} \renewcommand{\sectionmark}[1]{}

\section{Introduction}

We study a model of directed last-passage percolation on the integer
line $\ZZ$. 
Consider the random directed graph $G = (\ZZ,E)$, where
every directed edge $(i,j)$ from vertex $i$ to vertex $j>i$ is
present independently with probability $p \in \left( 0,1 \right]$. 
Random structures of this kind have been
used to study community food webs or
task graphs for parallel processing in computer science. In the first
case a link between $i$ and $j$ means that species $j$ preys upon species
$i$. The computational interpretation would be that task $i$ must be
completed before task $j$ can start. Such models have been studied in
\cite{cohenbriandnewman}, \cite{gelenbe}, \cite{isopinewman} and
\cite{newman}, for example.

We consider a model in which weights 
$v_{i,j}$ are attached to all edges $(i,j) \in E$. We are interested in
the asymptotic behaviour of the random variable $w_{0,n}$ that is
defined as follows. For any increasing path $\pi = \left( (i_0,
  i_1), (i_1,i_2), \ldots, (i_{l-1}, i_l) \right)$ from $i=i_0$ to
$j=i_l$ (for some $l \geq 0$) the \textit{weight} of the path is the
sum of the edge weights $\sum_{k=1}^{l} v_{i_{k-1},i_k}$. We define
the weight $w_{i,j}$ to be the maximal weight of a path from $i$ to
$j$. A maximizing path between two points is called a
\textit{geodesic}. If we let $\Pi_{i,j}$ be the set of all paths from
$i$ to $j$, then
\begin{equation} \notag
w_{i,j} = \max_{\pi \in \Pi_{i,j}} \sum_{e \in \pi} v_{e}.
\end{equation}
If there is no path between $i$ and $j$ (which is possible if $p < 1$)
then $w_{0,n}$ will be $-\infty$. We will analyze the behaviour of
$w_{0,n}$ as $n$ tends to infinity. In the parallel processing model, $v_{i,j}$
represents a delay required between the start of task $i$ and the start of task $j$, and  
$w_{0,n}$ represents the overall constraint on the time between the start of task $0$ and the start of 
task $n$. We will write $v$ for a generic random variable whose distribution 
is that of $v_{i,j}$, and we write $F$ for the distribution function of $v$. 

In this paper we study this one-dimensional random graph model with
general weight distributions. The equivalent model with constant weights 
was studied in \cite{fosskonstantopoulos} and
\cite{denisovfosskonstantopoulos}. 
Many features of the model with constant weights remain 
if the random weights have a sufficiently light tail. 
In the case where the weights have finite variance, we 
prove a strong law of large numbers for the passage time $w_{0,n}$,
and under the stronger assumption of a finite third moment
we give a functional central limit theorem. The strategy 
of the proof is similar to that of \cite{denisovfosskonstantopoulos};
we construct a renewal process with the 
property that no geodesic uses an edge which crosses a renewal point. 
In this way much of the analysis can be carried out by considering
the behaviour of the length and weight of individual renewal intervals.
In \cite{denisovfosskonstantopoulos}, 
the definition of renewal points
could be made rather simply in terms of the connectivity properties of
the graph, but here the disorder induced by the random weights
requires a rather more intricate construction. 
We then define a set of auxiliary random variables to
construct an upper bound for $\Gamma_0$, the first
renewal point to the right of the origin, and use them to 
conclude that $\mathbb{E} \left[ \Gamma_0 \right] < \infty$ if
$\mathbb{E} \left[ v^3 \right] < \infty$.
This provides a bound on the second moment of the length of a typical renewal
interval, and we can deduce that the variance that appears in the central limit theorem is finite.

This regenerative structure means that the problem retains
an essentially one-dimensional nature. Most of the edges used in 
an optimal path are short, and the behaviour is qualitatively the
same as one would see in a model with edges of bounded length.
We find an entirely different
behaviour when $\mathbb{E} \left[ v^2 \right] = \infty$. 
Now the passage time $w_{0,n}$ grows super-linearly in $n$, 
and the dominant contribution to the passage time is given 
by the weights of edges whose length is on the order of $n$. 
The appropriate comparison with a bounded-length model is now with a
\textit{two-dimensional} last-passage percolation problem. 
Under the assumption of a regularly varying tail, we prove
scaling laws and asymptotic distributions in terms of 
a ``continuum long-range last-passage percolation'' model 
on the interval $[0,1]$. The construction is closely related to 
that used by Hambly and Martin
\cite{hamblymartin}, who studied (nearest-neighbour) 
last-passage percolation in two
dimensions with heavy-tailed weights. 
There are interesting relationships between such models
and the theory of random matrices with heavy-tailed entries
(see for example \cite{BBP1, BBP2}).

The difference between the behaviour of the model in the cases
$\EE[v^2]<\infty$ and $\EE[v^2]=\infty$ 
can be seen in the simulations in Figure \ref{simfig} in Section \ref{figsection}.

The paper is organized in the following way: in Section 2 we will
present the main results. We will split the results up into two main
sections: one for the case where the weights have a second moment
(Section 2.1) and one where they do not (Section 2.2). The main
results in Section 2.1 are the strong law of large numbers and a
functional central limit theorem for the random variable $w_{0,n}$
giving the weight of the heaviest path from $0$ to $n$. 
Another main result (Theorem \ref{longestedge}) describes scaling laws 
for the length of the longest
edge and the weight of the heaviest edge used on the maximizing path
from $0$ to $n$, and we can use these results to conclude that in
certain situations where
$\mathbb{E} \left[ v^3 \right] = \infty$ a central limit theorem cannot hold. 

The main results for the case $\mathbb{E} \left[ v^2 \right] = \infty$ 
are then given in Section 2.2, along with simulations illustrating the 
scaling limit and the difference in behaviour from the case of weights 
with finite variance.

The proofs for the model with finite second moments can then be found
in Section 3. There we will also briefly discuss a variant of the model where the
edge probabilities are not constant, but depend on the lengths of the
edges (Section 3.5). This model was studied in
\cite{denisovfosskonstantopoulos} but without random
weights.
The proofs for the model with infinite second moments will be given in
Section 4; the structure is closely related to that of the corresponding
results in \cite{hamblymartin}.

\section{Main results}

In this section we introduce the main results. Like the rest of the paper, the results will be split up into two parts: results about the model with finite second moments and results about the model with infinite second moment.

\subsection{Weights with finite second moment}

Here we present the results for the model where the weights $v_{i,j}$ have a finite second moment, i.e. $\mathbb{E} \left[v^{2}\right] < \infty$. The aim is to prove a strong law of large numbers and a functional central limit theorem for $w_{0,n}$.
\begin{theorem} \label{SLLN}
If $\mathbb{E} \left[ v^2 \right] < \infty$ then there exists a constant 
$C\in(0,\infty)$ such that
$$ \frac{w_{0,n}}{n} \xrightarrow[n \rightarrow \infty]{} C \text{ a.s.} $$
and
$$ \frac{w_{0,n}^+}{n} \xrightarrow[n \rightarrow \infty]{} C \text{ in } \mathcal{L}^1. $$
\end{theorem}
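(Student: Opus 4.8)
The plan is to establish a subadditivity (or rather superadditivity) structure for $w_{0,n}$ and then invoke Kingman's subadditive ergodic theorem, after first dealing with the complication that $w_{0,n}$ can equal $-\infty$ when $p<1$. The key observation is that $w_{i,k} \ge w_{i,j} + w_{j,k}$ whenever all three quantities are finite, since concatenating a geodesic from $i$ to $j$ with one from $j$ to $k$ gives an admissible path from $i$ to $k$; thus $-w_{0,n}$ is subadditive in the appropriate sense. The sequence $(w_{i,j})_{i<j}$ is stationary and ergodic under integer shifts (the edge-presence variables and weights are i.i.d.), so the hypotheses of Kingman's theorem are met once integrability is under control. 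First I would record that $\EE[v^2]<\infty$ implies $\EE[v]<\infty$, which will give the needed one-sided moment bound.

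**Next** I would handle the finiteness/percolation issue. Let $A_n$ be the event that there is at least one directed path from $0$ to $n$. Since each edge $(i,i+1)$ is present with probability $p>0$ independently, $\PP(A_n)\ge p^n>0$; more usefully, the events that $0\to n$ is connected are highly likely to hold eventually, and in fact by a Borel–Cantelli / renewal argument (of the type used in \cite{fosskonstantopoulos, denisovfosskonstantopoulos}) the set of $n$ for which $0$ is connected to $n$ has density one, indeed the gaps between such $n$ have exponential tails. On the complement we have $w_{0,n}=-\infty$ and so $w_{0,n}^+=0$. The cleanest route is to introduce a modified weight $\wtw_{0,n}$ that replaces the missing path by some finite surrogate (for instance, the maximum weight over paths in an enlarged graph where all nearest-neighbour edges are added with weight $0$, or simply $w_{0,n}\vee 0$), show $\wtw_{0,n}$ is genuinely superadditive and integrable, apply Kingman to get $\wtw_{0,n}/n \to C$ a.s.\ and in $\mathcal L^1$, and then argue that $w_{0,n}^+$ and $\wtw_{0,n}$ differ by a quantity that is $o(n)$ almost surely and in $\mathcal L^1$ (because $w_{0,n}^+ \le \wtw_{0,n}$, and $\wtw_{0,n} - w_{0,n}^+$ is nonzero only on the rare disconnection events and is then controlled by the weight of a short detour path). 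One must also check $C<\infty$: this follows from $\EE[\wtw_{0,n}]\le \EE[\sum \text{edge weights on best path}]$ together with the fact that a geodesic from $0$ to $n$ uses at most $n$ edges and a crude first-moment bound on the maximum-weight path, e.g.\ $\EE[w_{0,n}^+] \le \EE\big[\sum_{0\le i<j\le n} v_{i,j}\mathbbm 1_{(i,j)\in E}\big]$-type estimates combined with the structure that only $O(n)$ edges are used; and $C>0$ since the nearest-neighbour path already gives $w_{0,n}\ge \sum_{i=0}^{n-1} v_{i,i+1}\mathbbm 1_{(i,i+1)\in E}$ on $A_n$, whose expectation grows linearly.

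**The $\mathcal L^1$ convergence of $w_{0,n}^+/n$** then follows from the $\mathcal L^1$ part of Kingman's theorem applied to $\wtw_{0,n}$, plus the bound $0 \le \wtw_{0,n} - w_{0,n}^+ \le \wtw_{0,n}\,\mathbbm 1_{A_n^c}$, whose expectation divided by $n$ tends to $0$ because $\PP(A_n^c)$ decays (exponentially) and $\wtw_{0,n}$ has at most linearly growing expectation — a Cauchy–Schwarz or Hölder argument closes this, using $\EE[v^2]<\infty$ to control $\EE[\wtw_{0,n}^2]$ if needed, or just using that on $A_n^c$ the surrogate is a short-path weight with bounded expectation.

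**The main obstacle** I anticipate is not the subadditive ergodic theorem itself but the careful bookkeeping around the $-\infty$ values: one needs a surrogate process that is simultaneously (i) genuinely superadditive with no exceptional set, (ii) integrable, and (iii) close enough to $w_{0,n}^+$ that the limit constant is unchanged and the $\mathcal L^1$ statement transfers. Choosing the surrogate well — and proving the exponential decay of the disconnection probability $\PP(A_n^c)$, or at least summability of it, which is what ultimately lets the surrogate and $w_{0,n}^+$ share the same asymptotics — is the delicate part. I expect the authors handle this via the renewal/regenerative construction they advertise in the introduction, which automatically produces connected regeneration blocks and hence bypasses the disconnection issue entirely; but a direct Kingman argument with a surrogate is the most economical self-contained route to the statement as written.
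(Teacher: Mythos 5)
Your high-level plan (superadditivity plus Kingman, with a surrogate to handle $w_{0,n}=-\infty$) is a genuinely different route from the paper's. The paper itself, in a remark immediately after its proof, observes that Kingman applies directly when $p=1$ (where $w_{0,n}>-\infty$ always), but deliberately avoids it for general $p$: instead it proves the SLLN through the renewal decomposition $w_{0,n}=w_{0,\Gamma_0}+\sum_{i=1}^{r(n)}w_{\Gamma_{i-1},\Gamma_i}+w_{\Gamma_{r(n)},n}$, using Lemma \ref{cycles} (the $w_{\Gamma_{i-1},\Gamma_i}$ are i.i.d.\ for $i\ge 1$) and the ordinary strong law. The only analytic input there is $\EE[w_{\Gamma_0,\Gamma_1}]<\infty$, which comes from the a priori bound $w_{0,n}\le n+\sum_{0\le x<n}Z_x$ with $Z_x=\sum_{y>x}[v_{x,y}-(y-x)]_+$ and $\EE Z_x\le\frac{1}{2}\EE[v^2]<\infty$; this simultaneously shows $C<\infty$ and, by domination, gives the $\mathcal{L}^1$ statement for $w_{0,n}^+/n$.

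The gap in your version is the surrogate: neither candidate you name has the properties you need. First, $w_{0,n}\vee 0$ is not superadditive. If $w_{0,j}=5$, $w_{j,k}=-\infty$ and $w_{0,k}=3$ (possible: take $j=1$, $k=2$, edge $(0,1)$ present with weight $5$, edge $(0,2)$ present with weight $3$, edge $(1,2)$ absent), then $(w_{0,j}\vee 0)+(w_{j,k}\vee 0)=5>3=w_{0,k}\vee 0$. Second, for the enlarged-graph surrogate $\wtw_{0,n}$ (add zero-weight nearest-neighbour edges), superadditivity and integrability do hold and Kingman gives $\wtw_{0,n}/n\to\tilde C$; but your claim that ``$\wtw_{0,n}-w_{0,n}^+$ is nonzero only on the rare disconnection events'' is false. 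With $n=3$, edges $(0,1)$ and $(2,3)$ present with weight $10$ each, $(0,3)$ present with weight $1$, and $(1,2)$, $(0,2)$, $(1,3)$ absent, one has $0$ connected to $3$ and $w_{0,3}=1$, yet $\wtw_{0,3}=20$. Such discrepancies are created by any single absent nearest-neighbour edge, so they occur with probability bounded away from $0$ per unit length; nothing in your argument rules out $\tilde C>C$, and since the enlarged model genuinely admits more paths there is no a priori reason the two linear rates should coincide. Repairing this would need a more carefully tailored surrogate (for instance restricting to the subsequence of strongly connected points), and showing that its Kingman constant equals $C$ would require essentially the regenerative machinery the paper builds anyway — which is why the paper's route, while less ``off the shelf'' than Kingman, is the economical one here.
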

In order to state the central limit theorem we will need some more
notation. This will also give an idea of how we want to prove the SLLN
and CLT. We define so-called \textit{renewal points} which will give
the model a regenerative structure. In order to do this we need the following
three events, which depend on a constant $c$ to be chosen later. The constant $c$ will have to be sufficiently small but still satisfy $\mathbb{P} \left[ v < c \right] > 0$.
We define
the random variables $\alpha_{i,j}$, $i < j \in \mathbb{Z}$ to be $1$ if
the edge $(i,j)$ is present and $-\infty$ otherwise. For $x\in\ZZ$, define
\begin{equation} \label{Ax++}
A_x^{\scriptscriptstyle{++}} = \bigcap_{l=1}^{\infty} \left\{ w_{x,x+l} \geq cl \right\},
\end{equation}
\begin{equation} \label{Ax-+} A_x^{\scriptscriptstyle{-+}} =
  \bigcap_{j,l=1}^{\infty} \left\{ \alpha_{x-j,x+l} v_{x-j,x+l} <
    c(l+j) \right\}
\end{equation}
and
\begin{equation} \label{Ax--}
A_x^{\scriptscriptstyle{--}} = \bigcap_{l=1}^{\infty} \left\{ w_{x-l,x} \geq cl \right\}.
\end{equation}
We say that $x$ is a renewal point if 
$A_x^{\scriptscriptstyle{++}} \cap A_x^{\scriptscriptstyle{-+}} \cap
A_x^{\scriptscriptstyle{--}}$ holds. Write $A_x$ for this combined event, and write 
$\cR$ for the set of points such that $A_x$ holds.

Let us explain in words the meaning of the three events used in the definition of the 
set $\cR$. The event $\Axpp$ occurs if for every $y>x$, the optimal path 
from $x$ to $y$ has weight at least $c$ times its length. 
The event $\Axmm$ says the equivalent thing about paths from $y$ to $x$ for $y<x$.
Finally, the event $\Axmp$ says that every edge that contains $x$ in its interior
has weight \textit{less} than $c$ times its length. 

We immediately obtain the following property:
\begin{lemma}\label{basicrenewallemma}
If $x\in\cR$ and $i<x<j$, then the optimal path from $i$ to $j$ passes
through the point $x$. In particular,
\begin{equation}
w_{i,j} = w_{i,x} + w_{x,j}. \label{renewal}
\end{equation}
\end{lemma}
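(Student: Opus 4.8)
The plan is to classify paths from $i$ to $j$ according to how they interact with $x$. Since any such path is increasing and runs from a vertex $<x$ to a vertex $>x$, it either passes through $x$ or it contains exactly one \emph{crossing edge} $(x-a,x+b)$ with $a,b\ge 1$. I would bound the weight of a path in each of these two cases, and show that the value $w_{i,x}+w_{x,j}$ is attained and that only paths through $x$ can attain it; \eqref{renewal} and the geodesic statement then both follow at once.

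First, for a path $\pi$ that passes through $x$: splitting $\pi$ at its visit to $x$ exhibits it as a concatenation of a path from $i$ to $x$ with a path from $x$ to $j$, so its weight is at most $w_{i,x}+w_{x,j}$. Conversely, the events $\Axmm$ (with $l=x-i$) and $\Axpp$ (with $l=j-x$) force $w_{i,x}$ and $w_{x,j}$ to be finite, i.e.\ there exist paths $i\to x$ and $x\to j$; concatenating geodesics of these realises the weight $w_{i,x}+w_{x,j}$. Hence $w_{i,j}\ge w_{i,x}+w_{x,j}$, with the bound attained through $x$.

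Now consider a path $\pi$ with crossing edge $(x-a,x+b)$. Since this edge is used it is present, so its contribution to the weight is $v_{x-a,x+b}$, and $\Axmp$ gives $v_{x-a,x+b}<c(a+b)$. The parts of $\pi$ before and after the crossing edge are paths from $i$ to $x-a$ and from $x+b$ to $j$, so the weight of $\pi$ is strictly less than $w_{i,x-a}+c(a+b)+w_{x+b,j}$. On the other hand $\Axmm$ gives $w_{x-a,x}\ge ca$ and $\Axpp$ gives $w_{x,x+b}\ge cb$, while concatenation of paths gives $w_{i,x}\ge w_{i,x-a}+w_{x-a,x}$ and $w_{x,j}\ge w_{x,x+b}+w_{x+b,j}$. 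Adding these inequalities shows that the weight of $\pi$ is strictly less than $w_{i,x}+w_{x,j}$.

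Combining the two cases, every path from $i$ to $j$ has weight at most $w_{i,x}+w_{x,j}$, and equality is impossible for a path that crosses $x$; since the bound is attained, we conclude $w_{i,j}=w_{i,x}+w_{x,j}$ and that every geodesic passes through $x$. I do not expect a serious obstacle here: the only point demanding care is to keep the relevant passage times finite, so that the additions are legitimate and no geodesic can route "over" $x$ through a vertex that is unreachable from $i$ or cannot reach $j$ — and this is exactly what the $\Axpp$ and $\Axmm$ parts of the definition of $\cR$ guarantee.
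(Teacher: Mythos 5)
Your proof is correct and uses essentially the same idea as the paper: the paper argues in one sentence that any crossing edge $(u,y)$ with $x$ in its interior can be replaced by the optimal paths $u\to x$ and $x\to y$, strictly increasing the weight, thanks to $\Axmp$ (upper bound $v_{u,y}<c(y-u)$) together with $\Axmm$ and $\Axpp$ (lower bound $w_{u,x}+w_{x,y}\ge c(y-u)$). You carry out the same comparison, just phrased as a direct inequality on the path weight rather than as an exchange, and you spell out the finiteness considerations that the paper leaves implicit.
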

For suppose a path includes an edge $(u,y)$ with $x$ in its interior.
Then we can increase the weight of the path by replacing that 
edge by the union of the optimal paths from $u$ to $x$ and from $x$ to $y$.

A priori the set $\mathcal{R}$ might be empty, finite or
infinite, but the following result will be proved in Sections 3.1
(for $p=1$) and 3.2 (for $p<1$):
\begin{lemma} \label{Rinf} Suppose that $\mathbb{E} \left[ v^2
  \right] < \infty$. There exists $c>0$ such that 
the set $\mathcal{R}$ is infinite with probability 1.
\end{lemma}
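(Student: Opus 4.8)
The plan is to produce a small $c>0$ with $\PP[A_0]>0$ and then pass to the conclusion by ergodicity. For the ergodic step, the environment $\bigl((\alpha_{i,j})_{i<j},(v_{i,j})_{i<j}\bigr)$ is a product measure, and under the identification of the edge set with $\bigsqcup_{d\ge1}\ZZ$ via $(i,j)\leftrightarrow(j-i,\,i)$ the left shift $\theta\colon(i,j)\mapsto(i+1,j+1)$ becomes the product over $d\ge1$ of the unit shift on $\ZZ$; this is a countable product of Bernoulli shifts, hence mixing and in particular ergodic. Since $\mathbbm 1_{A_x}=\mathbbm 1_{A_0}\circ\theta^{x}$, the ergodic theorem gives $n^{-1}\sum_{x=0}^{n-1}\mathbbm 1_{A_x}\to\PP[A_0]$ a.s., so $\PP[A_0]>0$ forces $\cR$ to be infinite a.s. Moreover the three events at $x=0$ are measurable with respect to disjoint families of edges -- $\Axpp$ uses only edges $(i,j)$ with $0\le i<j$, $\Axmm$ only those with $i<j\le0$, and $\Axmp$ only those with $i<0<j$ -- hence they are independent, and the reflection $(i,j)\mapsto(-j,-i)$ gives $\PP[A_0]=\PP[\Axpp]^2\,\PP[\Axmp]$. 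It therefore suffices to check $\PP[\Axmp]>0$ for every $c$ and $\PP[\Axpp]>0$ for some small $c$.

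For $\Axmp$ at $x=0$, the complement is the union over $j,l\ge1$ of the independent events $\{\alpha_{-j,l}=1,\ v_{-j,l}\ge c(l+j)\}$, each of probability $p\,\PP[v\ge c(l+j)]\le p<1$; hence $\PP[\Axmp]=\prod_{j,l\ge1}\bigl(1-p\,\PP[v\ge c(l+j)]\bigr)$, which is strictly positive exactly because $\sum_{j,l\ge1}\PP[v\ge c(l+j)]=\sum_{m\ge2}(m-1)\,\PP[v\ge cm]<\infty$, and this finiteness is precisely where the hypothesis $\EE[v^2]<\infty$ enters.

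For $\Axpp$ I would argue separately for $p=1$ and $p<1$. When $p=1$ every edge is present, so the nearest-neighbour path gives $w_{0,l}\ge S_l:=\sum_{k=1}^{l}v_{k-1,k}$; the increments are a.s.\ positive and $S_l/l\to\EE[v]>0$ a.s., so $\inf_{l\ge1}S_l/l>0$ a.s.\ and $\PP[\Axpp]\ge\PP[\inf_l S_l/l\ge c]>0$ for small $c$. When $p<1$ the nearest-neighbour path may fail to exist, so I would replace it by a surrogate. First, $\PP[\mathcal C]>0$ where $\mathcal C=\{0\to l\text{ for all }l\ge1\}$, since conditionally on $0\to k$ for all $k<l$ the event $0\to l$ has fresh conditional probability $1-(1-p)^{l}$, whence $\PP[\mathcal C]=\prod_{l\ge1}\bigl(1-(1-p)^{l}\bigr)>0$. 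On $\mathcal C$, following nearest present in-edges backwards from $l$ builds a path $\gamma_l$ from $0$ to $l$ with $w_{0,l}\ge\operatorname{wt}(\gamma_l)=:W_l$; its number of edges $M_l$ is a renewal count with $\operatorname{Geometric}(p)$ increments, and conditionally on the vertices of $\gamma_l$ its edge weights are i.i.d.\ copies of $v$. Standard large-deviation estimates (for the renewal count, and Cram\'er's lower tail for the i.i.d.\ sum, which needs only $\EE[v]<\infty$) then give $\PP[W_l<\delta l]\le e^{-c'l}$ for a suitably small fixed $\delta>0$; summing over $l$ and absorbing the finitely many small $l$ using $W_l>0$ on $\mathcal C$ yields $\PP\bigl[\mathcal C\cap\bigcap_{l\ge1}\{W_l\ge cl\}\bigr]>0$ for $c$ small enough, so $\PP[\Axpp]>0$. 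Fixing such a small $c$ (also with $\PP[v<c]>0$ when that is possible) finishes the argument.

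I expect the main obstacle to be the $p<1$ case of $\PP[\Axpp]>0$: one must guarantee, with positive probability, paths reaching \emph{every} $l$ -- not merely arbitrarily distant ones -- with weight at least $cl$ \emph{simultaneously} over all $l$. The backward-in-edge construction is what secures reachability of every $l$, while the uniformity in $l$ is what forces the large-deviation bounds and the separate handling of small $l$; by contrast $\PP[\Axmp]>0$ and the $p=1$ case of $\PP[\Axpp]>0$ are soft.
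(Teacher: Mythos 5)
Your proposal reaches the right conclusion but takes a genuinely different route from the paper's in two places, and has a couple of small technical gaps in the $p<1$ case. First, you pass from $\PP[A_0]>0$ to $|\cR|=\infty$ via the pointwise ergodic theorem for the (countable-product-of-Bernoulli, hence mixing) shift; the paper's Lemma 3.1 instead approximates $A_0$ by events depending on finitely many weights and runs a bare-hands independence argument. Both work; your version is cleaner and shorter. Second, and more substantially, for $p<1$ the paper imports the machinery of \emph{strongly connected points} from \cite{denisovfosskonstantopoulos} --- a stationary renewal process $(\tau_k)$ with exponential moments and a decomposition $w_{0,l}\ge w_{0,\tau_0}+\sum_j w_{\tau_{j-1},\tau_j}+w_{\tau_{m(l)},l}$ --- whereas you build from scratch explicit backward-greedy paths $\gamma_l$ on the event $\mathcal C=\{0\to l\ \forall l\ge1\}$. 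Your approach is more self-contained, but it needs two things spelled out. (a) That the backward-greedy walk from $l$ terminates \emph{at} $0$ on $\mathcal C$: this is true but not automatic --- if it jumped past $0$ from some $k_i>0$, then no edge $(j,k_i)$ with $0\le j<k_i$ would be present, contradicting $0\to k_i$; you should make this argument explicit. (b) The claim that, conditionally on $\mathcal C$, the jump lengths of $\gamma_l$ are i.i.d.\ Geometric$(p)$ is not quite right: the conditioning on $\mathcal C$ biases the jumps. This is fixable by a one-line comparison with the \emph{unconditioned} backward walk: the event $\{M_l<\delta'l\}\cap\mathcal C$ is contained in the event that the unconditioned Geometric$(p)$-increment walk from $l$ reaches or passes $0$ in fewer than $\delta'l$ steps, and this has the exponential bound you want; dividing by $\PP[\mathcal C]$ (a fixed positive constant) yields the conditional estimate. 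Finally, the ``absorb the finitely many small $l$'' step deserves a sentence (the paper does the analogous thing by altering finitely many edge weights on a positive-probability event), and for $p=1$ you should note that positivity of $\PP[\Axmp]$ requires $c>\essinf[v]/2$ (the paper imposes the slightly stronger $c>\essinf[v]$), not ``every $c$''. With these patches the argument goes through.
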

The result is stated explicitly with a sufficient condition on $c$ in 
Lemma \ref{combinedlemma}.

We can then denote the points in $\mathcal{R}$ by $\left( \Gamma_n
\right)_{n \in \mathbb{Z}}$ where $\Gamma_0$ is the smallest
non-negative element of $\mathcal{R}$.
\begin{remark}
  If $\mathbb{E} \left[ v^2 \right] = \infty$ then the set
  $\mathcal{R}$ is almost surely the empty set. This will follow from
  the proof of Lemma \ref{Rinf}, see Remark \ref{empty}.
\end{remark}
The final consequence of the definition is
that, as suggested by the name, the set of points $\cR$ forms a 
renewal process. Furthermore, conditional on the points of $\cR$, 
the weights of edges contained within different renewal intervals 
are independent. 
These properties are proved in Lemma \ref{cycles},
and will be central to the structure of the argument that follows.

We now have the necessary notation to state the functional central limit theorem for $w_{0,n}$.
To ensure that the variance is finite, we need the stronger
condition $\mathbb{E} \left[ v^3 \right] < \infty$. 
(see Proposition \ref{var} below).

\begin{theorem} \label{CLT} 
Suppose $\EE\left[v^3\right]<\infty$.
Then there exists $c>0$ such that the following holds. 
Let $\sigma^{2} = \operatorname{Var}
  \left( w_{\Gamma_0,\Gamma_1} - C \left( \Gamma_1 - \Gamma_0 \right)
  \right)$ and $\lambda = \mathbb{P} \left[A_0\right] =\PP(0\in\cR)$.  
Then $\sigma^2 < \infty$ and
$$ \left( l_n(t) = \frac{w_{0,[nt]} - Cnt}{\lambda^{\frac{1}{2}}\sigma\sqrt{n}} , t \geq 0 \right) $$
converges weakly as $n \rightarrow \infty$ to a standard Brownian motion.
\end{theorem}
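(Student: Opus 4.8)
The plan is to use the regenerative structure provided by Lemmas~\ref{basicrenewallemma} and~\ref{cycles} to realize $w_{0,[nt]}$ as the value of a cumulative (renewal--reward) process up to two vanishing boundary corrections, and then to invoke Donsker's theorem together with a random time change. I would fix $c$ as in Lemma~\ref{combinedlemma}, so that $\cR$ is a.s.\ infinite, and for $k\ge1$ put $\tau_k=\Gamma_k-\Gamma_{k-1}$ and $\zeta_k=w_{\Gamma_{k-1},\Gamma_k}-C\tau_k$. Since every geodesic from $\Gamma_{k-1}$ to $\Gamma_k$ lies inside $[\Gamma_{k-1},\Gamma_k]$, Lemma~\ref{cycles} shows the pairs $(\tau_k,\zeta_k)_{k\ge1}$ are i.i.d.; Proposition~\ref{var} gives $\sigma^2=\var(\zeta_1)<\infty$; as explained in the introduction, $\EE[\Gamma_0]<\infty$ holds under $\EE[v^3]<\infty$ and yields $\EE[\tau_1^2]<\infty$, hence also $w_{\Gamma_0,\Gamma_1}=\zeta_1+C\tau_1\in L^2$; the renewal structure (Lemma~\ref{cycles}) gives $\EE[\tau_1]=1/\lambda$; and the renewal--reward strong law underlying Theorem~\ref{SLLN} gives $\EE[\zeta_1]=0$. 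Writing $M(m)=\max\{k\ge0:\Gamma_k\le m\}$ and applying Lemma~\ref{basicrenewallemma} to the renewal points in $[0,m]$, one obtains, for all $m\ge\Gamma_0$,
\begin{equation}\notag
w_{0,[nt]}-Cnt=\sum_{k=1}^{M([nt])}\zeta_k+C\bigl(\Gamma_{M([nt])}-[nt]\bigr)+\bigl(w_{0,\Gamma_0}-C\Gamma_0\bigr)+w_{\Gamma_{M([nt])},[nt]}.
\end{equation}

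The first term carries the limit. Donsker's theorem gives $\bigl(\tfrac{1}{\sigma\sqrt n}\sum_{k=1}^{[ns]}\zeta_k\bigr)_{s\ge0}\Rightarrow B$ in $D[0,\infty)$, while the renewal strong law gives $\tfrac1n M([nt])\to\lambda t$ a.s.\ uniformly on compact $t$-sets; Billingsley's random-change-of-time theorem then yields $\tfrac{1}{\sigma\sqrt n}\sum_{k=1}^{M([nt])}\zeta_k\Rightarrow B(\lambda t)$, and since $(B(\lambda t))_{t\ge0}$ has the law of $(\sqrt\lambda\,B(t))_{t\ge0}$, this is exactly $\sqrt\lambda\,\sigma$ times the limit claimed for $l_n$. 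It then suffices to show that the other three terms in the decomposition, divided by $\sqrt n$, tend to $0$ uniformly on compact $t$-sets in probability, after which the converging-together lemma (the Brownian limit being continuous) gives $l_n\Rightarrow B$. The term $w_{0,\Gamma_0}-C\Gamma_0$ is a fixed a.s.\ finite random variable, hence negligible. For the second term, $|\Gamma_{M([nt])}-[nt]|\le\tau_{M([nt])+1}\le\max_{k\le M([nT])+1}\tau_k$ for $t\le T$, and since $M([nT])\le 2\lambda nT$ for all large $n$ (a.s.) and $\EE[\tau_1^2]<\infty$, one has $\max_{k\le 2\lambda nT+1}\tau_k=o(\sqrt n)$ a.s. For the third term, the lower bounds in $\Axpp$ at $\Gamma_{M([nt])}$ and in $\Axmm$ at $\Gamma_{M([nt])+1}$ allow one to complete any geodesic inside the cycle $[\Gamma_{M([nt])},\Gamma_{M([nt])+1}]$ to a geodesic across the whole cycle without losing weight, so $0\le w_{\Gamma_{M([nt])},[nt]}\le w_{\Gamma_{M([nt])},\Gamma_{M([nt])+1}}$, a term of the i.i.d.\ sequence $(w_{\Gamma_{k-1},\Gamma_k})_{k\ge1}$ whose entries lie in $L^2$; hence this term is again $o(\sqrt n)$ uniformly.

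I expect this final assembly to be routine; the substance of the theorem sits in the inputs it consumes, above all the finiteness of $\sigma^2$ and of $\EE[\tau_1^2]$ --- that is, Proposition~\ref{var} and the estimate $\EE[\Gamma_0]<\infty$ under $\EE[v^3]<\infty$ --- whose proofs rest on the delicate construction of the renewal points and of the auxiliary variables bounding $\Gamma_0$, all of which we are taking as given here. Within the present argument the one step needing genuine care is the handling of the incomplete final cycle $w_{\Gamma_{M([nt])},[nt]}$: because the ``reward'' here accrues continuously along a cycle rather than in a single jump at its end, one cannot simply quote an off-the-shelf renewal--reward functional limit theorem, but must bound this partial reward by an honest i.i.d.\ cycle functional with a finite second moment --- which is precisely what the inequality $w_{\Gamma_{M([nt])},[nt]}\le w_{\Gamma_{M([nt])},\Gamma_{M([nt])+1}}$ above, obtained from the renewal-point properties, provides.
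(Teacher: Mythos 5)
Your proposal is correct and follows essentially the same route as the paper: the paper establishes $\EE[\Gamma_0]<\infty$ (Lemma~\ref{exp}) and $\sigma^2<\infty$ (Proposition~\ref{var}), then delegates the final assembly to the proof of Theorem~2 in \cite{denisovfosskonstantopoulos}, citing exactly the ingredients you make explicit --- Donsker, the random time change via $r(n)/n\to\lambda$, and control of the boundary cycles. Your spelled-out version of that assembly, including the bound $w_{\Gamma_{M([nt])},[nt]}\le w_{\Gamma_{M([nt])},\Gamma_{M([nt])+1}}$ for the incomplete final cycle (which holds because $\Gamma_{M([nt])+1}$ is a strongly connected point so the weight can be completed non-negatively, rather than directly from $\Axpp$/$\Axmm$ as you phrase it), and the fact $\EE[\zeta_1]=0$ read off from the SLLN identity $C=\lambda\,\EE[w_{\Gamma_0,\Gamma_1}]$, is the standard renewal--reward FCLT argument and is sound. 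One small slip: you fix $c$ via Lemma~\ref{combinedlemma}, but to obtain $\EE[\Gamma_0]<\infty$ you need the stronger condition (\ref{ccond2}) from Lemma~\ref{exp} (which implies (\ref{ccond1})), and this is the condition the paper actually imposes at the start of its proof.
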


\begin{remark}
Note that the $c$ corresponds to the 
$c$ in the definition of $\left( \Gamma_n \right)_{n \in \mathbb{Z}}$ and 
$C$ is the constant from Theorem \ref{SLLN}. A sufficient condition for finitenes of 
$\EE \Gamma_0$ in terms of $c$ is given in (\ref{ccond2}). It is not obvious from 
the definitions, but it follows from Theorem \ref{CLT} that, as long as (\ref{ccond2}) holds, 
the quantity $\lambda^{\frac{1}{2}}\sigma$ does not depend on $c$.
\end{remark}

The main idea for the proofs of both the SLLN and the CLT is to use
the regenerative structure induced by the renewal points to represent
$w_{0,n}$ as a random sum of i.i.d.\ random variables in the following
way
\begin{equation} \label{sumw0n}
w_{0,n} = w_{0,\Gamma_0} + \sum_{i=1}^{r(n)} w_{\Gamma_{i-1}, \Gamma_i} + w_{\Gamma_{r(n)},n}
\end{equation}
where $r(n)$ is such that $\Gamma_{r(n)}$ is the largest renewal point
to the left of $n$. We will show in Proposition \ref{cycles} that the
random variables $w_{\Gamma_{i-1},\Gamma_i}$ form an i.i.d.\ sequence,
for $i \geq 1$.

Let $\ell_n$ be the length of
the longest edge and $h_n$ the weight of the heaviest edge used on
the geodesic from $0$ to $n$.
The final result of this section concerns the asymptotic behaviour
of 
$\ell_n$ and $h_n$, under the assumption that the tail of 
the distribution is regularly varying with index $s>2$.  
When $2<s<3$, we can deduce
that the fluctuations of the passage time are of order larger than 
$\sqrt{n}$, and so the central limit theorem cannot be extended to this case.

\begin{theorem} \label{longestedge}
Suppose that the tail of $v$ is regularly varying with index $s>2$, 
in the sense that
\begin{equation}\label{F}
\frac{1-F(tx)}{1-F(x)}\to t^{-s} \text{ as } x\to\infty, \text{ for every } t>0.
\end{equation}
Then we have
\begin{equation} \label{upperlower}
\frac{\log \ell_n}{\log n} \rightarrow \frac{1}{s-1} \text{ in probability as } n \rightarrow \infty
\end{equation}
and the same holds with $\ell_n$ replaced by $h_n$.

Furthermore, the fluctuations of $w_{0,n}$ are of larger order than $n^{\beta}$
for any $\beta<1/(s-1)$, in the sense that for any sequence $y_n$,
\[
\PP(w_{0,n}\in [y_n, y_n+n^\beta]) \to 0 \text{ as } n\to\infty.
\]
In particular, if $2<s<3$ then 
\[
\frac{\operatorname{Var} \left( w_{0,n} \right)}{n} \rightarrow \infty,
\]
and a central limit theorem such as that in Theorem \ref{CLT} cannot hold,
even for individual values of $t$.
\end{theorem}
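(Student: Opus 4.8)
The plan is to handle the four assertions in turn, the upper bounds on $\ell_n,h_n$ being the simplest. The key observation for the upper bound is that any edge $e=(i,j)$ lying on the geodesic from $0$ to $n$ is itself a geodesic from $i$ to $j$: the geodesic has no vertex strictly between $i$ and $j$, so replacing $e$ by a strictly heavier path from $i$ to $j$ would improve it (this is exactly the mechanism behind Lemma~\ref{basicrenewallemma}). Hence $v_e=w_{i,j}$, and in particular $v_e$ dominates the weight of any path from $i$ to $j$ that avoids $e$. Using Theorem~\ref{SLLN} (available since a regularly varying tail with index $s>2$ forces $\EE[v^2]<\infty$) together with a lower-deviation estimate — Hoeffding after truncating the weights when $p=1$, and in addition the connectivity bounds of \cite{denisovfosskonstantopoulos} when $p<1$ — I would show that with probability $\to1$ there is a constant $c_1>0$ such that every pair $i<j$ in $[0,n]$ with $j-i$ above a slowly growing threshold admits a path from $i$ to $j$ avoiding the edge $(i,j)$ of weight at least $c_1(j-i)$. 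On this event $\{\ell_n\ge L\}$ forces some edge $(i,j)\subseteq[0,n]$ with $j-i\ge L$ to satisfy $v_{i,j}\ge c_1(j-i)$, and a union bound together with the regular variation of $\bar F(x):=1-F(x)$ (Karamata's theorem) gives
\[
\PP(\ell_n\ge L)\le o(1)+n\sum_{\ell\ge L}\bar F(c_1\ell)=o(1)+O\!\left(nL\bar F(L)\right)=o(1)+n^{-(s-1)\epsilon+o(1)}
\]
for $L=n^{1/(s-1)+\epsilon}$, which tends to $0$. The bound for $h_n$ then follows, since $h_n$ is at most the largest weight carried by an edge of length at most $\ell_n\le n^{1/(s-1)+\epsilon/2}$, and there are at most $n^{s/(s-1)+\epsilon/2}$ such edges, whose maximum weight is $n^{1/(s-1)+O(\epsilon)}$ with probability $\to1$.

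\textbf{Lower bounds and anti-concentration.} Here I would isolate a pivotal long edge. Fix small $\delta>0$, let $\mathcal F$ be the set of edges contained in $[0,n]$ of length at most $\ell_0:=\lfloor\delta n^{1/(s-1)}\rfloor$ (there are $N\asymp\delta n^{s/(s-1)}$ of them), and let $v^{(1)}\ge v^{(2)}\ge\cdots$ be the order statistics of their weights, with $e^\ast=(a^\ast,b^\ast)$ carrying $v^{(1)}$. Then $v^{(1)}$ and $v^{(2)}$ are of order $n^{1/(s-1)}$, and for any fixed $\gamma>0$ one has $v^{(2)}\ge n^{1/(s-1)-\gamma}$ with probability $\to1$. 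Conditioning on the whole environment except the value $v^{(1)}$ (but including which edges carry $v^{(2)},v^{(3)},\dots$, so that $e^\ast$ is determined), one has $w_{0,n}=\max\bigl(R^\ast,\,S^\ast+v^{(1)}\bigr)$, where $R^\ast$ is the weight of the best $0$-to-$n$ path avoiding $e^\ast$ and $S^\ast=w_{0,a^\ast}+w_{b^\ast,n}$, both measurable with respect to the conditioning, while $v^{(1)}$ is conditionally distributed as $v$ conditioned to exceed $v^{(2)}$. When $F$ has a density, the regular variation of $\bar F$ makes this conditional density at most a constant times $1/v^{(2)}$ on $[v^{(2)},\infty)$; since $w_{0,n}$ has slope $1$ in $v^{(1)}$ on $\{S^\ast+v^{(1)}>R^\ast\}$, the conditional law of $w_{0,n}$ is, off a possible atom at $R^\ast$, absolutely continuous with density $O(1/v^{(2)})$, whence
\[
\PP\bigl(w_{0,n}\in[y_n,y_n+n^\beta]\bigr)\le \PP\bigl(w_{0,n}=R^\ast,\ R^\ast\in[y_n,y_n+n^\beta]\bigr)+n^{\beta-1/(s-1)+o(1)},
\]
and the second term vanishes for every $\beta<1/(s-1)$. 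Choosing $\delta$ small enough that routing through $e^\ast$ strictly improves on every path that uses only edges of length and weight below $n^{1/(s-1)-\epsilon}$ also forces $\ell_n\ge n^{1/(s-1)-\epsilon}$ and $h_n\ge n^{1/(s-1)-\epsilon}$ with probability $\to1$ (conversely $\{\ell_n<n^{1/(s-1)-\epsilon}\}$ would confine $h_n$ to the maximum of at most $n^{1+1/(s-1)-\epsilon}$ weights, i.e.\ to $n^{1/(s-1)-\epsilon/s+o(1)}$, incompatible with the bound just obtained for $h_n$).

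\textbf{Variance and failure of the central limit theorem.} Granting $\rho_n:=\sup_y\PP(w_{0,n}\in[y,y+n^\beta])\to0$ for every $\beta<1/(s-1)$, the rest is elementary. If $\sigma_n=\operatorname{Var}(w_{0,n})^{1/2}$ then Chebyshev puts $w_{0,n}$ in some interval of length $4\sigma_n$ with probability at least $3/4$; tiling it by $\lceil 4\sigma_n/n^\beta\rceil$ pieces of length $n^\beta$ gives $\rho_n\ge (3/4)/\lceil 4\sigma_n/n^\beta\rceil$, so $\sigma_n\ge \tfrac14 n^\beta\bigl(3/(4\rho_n)-1\bigr)$ and hence $\operatorname{Var}(w_{0,n})\ge n^{2\beta}/16$ for all large $n$. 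For $2<s<3$ one has $1/(s-1)>1/2$, so picking $\beta\in(1/2,1/(s-1))$ yields $\operatorname{Var}(w_{0,n})/n\ge n^{2\beta-1}/16\to\infty$. Likewise any central-limit scaling is impossible: if $(w_{0,[nt]}-Cnt)/a_n$ converged weakly to a nondegenerate Gaussian with $a_n=O(\sqrt n)$, then, since $n^\beta/a_n\to\infty$ for $\beta>1/2$, we would get $\PP\bigl(w_{0,[nt]}\in[Cnt,Cnt+n^\beta]\bigr)\to\Phi(\infty)-\Phi(0)=\tfrac12$, contradicting $\rho_{[nt]}\to0$; in particular the conclusion of Theorem~\ref{CLT} cannot hold even for an individual value of $t$.

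\textbf{The main obstacle} is the ``atom'' term above: controlling the event that the geodesic avoids the pivotal edge $e^\ast$, on which $w_{0,n}=R^\ast$ and $R^\ast$ is itself spread out on a comparable scale. Following \cite{hamblymartin} I would iterate the conditioning down the (bounded, or at worst slowly growing) list of heaviest edges of $\mathcal F$ — equivalently, run a multi-scale comparison with the continuum last-passage model — which also furnishes the quantitative input needed to show that the geodesic genuinely gains from a long edge, the point underlying the $\ell_n$ and $h_n$ lower bounds. Two secondary issues require routine care: if $F$ has atoms the conditional density estimates must be replaced by their lattice analogues (or $F$ perturbed by an independent absolutely continuous term), and for $p<1$ one imports the connectivity and lower-deviation estimates from the constant-weight analysis of \cite{denisovfosskonstantopoulos}; the renewal structure of Lemma~\ref{Rinf}, available here because $s>2$ gives $\EE[v^2]<\infty$, can be used to localise these estimates to individual renewal intervals.
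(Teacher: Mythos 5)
Your upper bounds for $\ell_n$ and $h_n$, and the final deduction of $\var(w_{0,n})/n\to\infty$ and of the failure of any CLT from the anti-concentration statement, are correct and essentially the paper's argument (the Chebyshev tiling at the end is in fact spelled out more carefully than in the paper). The lower bound for $\ell_n$ via a pivotal heavy edge is also recoverable, though as sketched it skips the point the paper has to work for: even if $e^\ast$ beats every competing configuration of shorter edges in its neighbourhood, the global geodesic may bypass that neighbourhood entirely; the paper handles this by comparing inside an interval $I^\ast$ five times as long as $e^\ast$, observing that any bypassing edge is itself at least as long as $e^\ast$, and (for $p<1$) requiring strongly connected points on either side of $e^\ast$.

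The genuine gap is in the anti-concentration argument, and it is exactly the ``atom at $R^\ast$'' that you yourself flag as the main obstacle: your conditioning leaves the term $\PP(w_{0,n}=R^\ast,\ R^\ast\in[y_n,y_n+n^\beta])$ uncontrolled, and there is no reason the geodesic uses the globally heaviest short edge $e^\ast$ with probability tending to $1$, so this term need not vanish. ``Iterating the conditioning down the list of heaviest edges'' is not an argument as stated, and the continuum comparison of \cite{hamblymartin} is the tool for $s<2$, where the dominant edges have length of order $n$; here they have length $n^{1/(s-1)}=o(n)$ and that machinery does not apply. The paper removes the atom by conditioning on a different edge: let $\bare$ be the heaviest edge \emph{used by the geodesic}, which has weight $>n^{\beta'}$ with high probability by the already-proved lower bound for $h_n$. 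Given the identity of $\bare$ and the weights of all other edges, the set of values of $v_{\bare}$ for which $\bare$ is the heaviest edge of the geodesic is a half-line $(V_{\min},\infty)$, and on all of it $w_{0,n}$ equals a fixed constant plus $v_{\bare}$; so conditionally $w_{0,n}$ is a deterministic shift of $v$ conditioned on $v>V_{\min}$, with $V_{\min}>n^{\beta'}/2$ with high probability, and there is no atom left over. This also disposes of your density issue: a pointwise density bound $O(1/v^{(2)})$ does not follow from regular variation of $1-F$ (and perturbing $F$ changes the model), whereas the tail-ratio estimate $\sup_x\,\PP\left(v\in[x,x+h]\mid v>V\right)\to0$ for $h=o(V)$, which is all that is needed, follows from (\ref{F}) alone.
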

We prove this theorem in Section 3.4 and also give some examples that show how the behaviour of $\ell_n$ depends on the 
tail of the distribution.

\subsection{Weights with infinite second moment}
\label{infsec}
Here we look at weight distributions that do not have a finite second
moment, i.e. $\mathbb{E} \left[ v^2 \right] = \infty$. 
Under this condition, $w_{0,n}$ grows faster than linearly. 
This can be seen by considering the contribution of the single heaviest
edge in $[0,n]$, and noting that  
the expectation of the maximum of $n^2$ i.i.d.\ random variables with infinite variance
has expectation that grows faster than $n$.
Since $w_{0,n}$ is at least as large as the weight of this single edge, 
we have that $\EE w_{0,n}/n\to\infty$ as $n\to\infty$, and from Kingman's subadditive ergodic 
theorem we can conclude that in fact $\frac{w_{0,n}}{n}\to\infty$ a.s.

We will describe the asymptotic behaviour of $w_{0,n}$, under the assumption 
that the tail of the weight distribution is regularly 
varying with index $s\in(0,2)$, in the sense of (\ref{F}). 
We introduce two useful ways to construct our model in discrete
space and explain how the second construction can be used to define
a corresponding model in continuous space on $[0,1]$. We show that the
passage time $w$ for the continuous model is finite and show
convergence of an appropriately rescaled version of $w_{0,n}$ to
$w$. 

\subsubsection{Discrete model}

We start with the case $p=1$. Since $w_{0,n}$ depends only on
$v_{i,j}$ for $0 \leq i,j \leq n$ it suffices to consider only the
interval $[0,n]$. We can then rescale and consider the set $\left\{
  0,\frac{1}{n}, \ldots , \frac{n-1}{n} , 1 \right\}$ instead of the
interval $[0,n]$. For each $n \in \mathbb{N}$ and $0 \leq i < j \leq
n$, let $v^{(n)}_{i,j}$ be i.i.d.\ with distribution $F$. The weight
of the edge between $\frac{i}{n}$ and $\frac{j}{n}$ is now given by
$v^{(n)}_{i,j}$. We introduce some new notation: for two edges $x =
(i,j)$, $y = (i',j')$ we write $x \sim y$ and say $x$ and $y$ are
compatible if $j \leq i'$ or $j' \leq i$. The edges $x$ and $y$ being
compatible means that they do not overlap and that they can both be
used on a path from $0$ to $1$. As before we define
\begin{equation} \label{w0n1}
w_{0,n} = \max_{\pi \in \Pi_n} \sum_{e \in \pi} v^{(n)}_e
\end{equation}
where $\Pi_n$ is the set of all paths from $0$ to $1$ in $\left\{ 0,\frac{1}{n}, \ldots , \frac{n-1}{n} , 1 \right\}$.

\bigskip

We can think of the same model in the following alternative way: let $M_1^{(n)}
\geq M_2^{(n)} \geq \ldots \geq M_{\binom{n+1}{2}}^{(n)}$ be the order
statistics in decreasing order of the $v^{(n)}_{i,j}$. Let
$Y_1^{(n)}, Y_2^{(n)}, \ldots, Y_{\binom{n+1}{2}}^{(n)}$ be a random
ordering of those edges, chosen uniformly from all the
$\binom{n+1}{2}!$ possibilities. $Y_i^{(n)}$ is the location of the
$i$-th largest weight $M_i^{(n)}$. Now
\begin{equation} \label{C0n}
\mathcal{C}_{0,n} = \left\{ A \subset \left\{1,\ldots,\binom{n+1}{2} \right\} : 
Y_i^{(n)} \sim Y_j^{(n)} \text{ for all } i,j \in A \right\}
\end{equation}
is the random set of admissible paths. Then we have
\begin{equation} \label{w0n2}
w_{0,n} = \max_{A \in \mathcal{C}_{0,n}} \sum_{i \in A} M_i^{(n)}
\end{equation}
which is equivalent to the previous definition of $w_{0,n}$ in (\ref{w0n1}).

\subsubsection{Continuous model}

Following the second approach above, we can define a corresponding continuous model.
Let $W_1,W_2,\ldots$ be a sequence of i.i.d.\
exponential random variables with mean 1 and define, for
$k=1,2,\ldots$, $M_k = \left( W_1 + \ldots + W_k
\right)^{-\frac{1}{s}}$. Let $U_1,U_2,\ldots$ and $V_1,V_2,\ldots$ be
two sequences of i.i.d.\ uniform random variables on $[0,1]$
(independent of the $W_k$). Put $Y_i = (\min(U_i,V_i),\max(U_i,V_i))$
for $i=1,2,\ldots$. The $i$th largest weight $M_i$ will be attached
to the $i$th edge $Y_i$. Similar to (\ref{C0n}) we define
\begin{equation} \notag
\mathcal{C} = \left\{ A \subset 
\left\{ 1,2, \ldots \right\} : Y_i \sim Y_j \text{ for all } i,j \in A \right\}.
\end{equation}
Then we can define a last-passage time for this continuous model
analogously to (\ref{w0n2}) by
\begin{equation} \label{w}
w = \sup_{A \in \mathcal{C}} \sum_{i \in A} M_i.
\end{equation}
A priori the random variable $w$ could be infinite, but we will see in
Theorem \ref{main1} below that it is almost surely finite.

\subsubsection{Convergence results}\label{figsection}

The intuition behind the approximation of the discrete model by the continuous one
is the following 
pair of convergence results.
First, for any finite $k \in \mathbb{N}$ we have
\begin{equation} \label{Yeq}
\left( Y_1^{(n)} , Y_2^{(n)} , \ldots , Y_k^{(n)} \right) 
\xrightarrow[]{d} \left( Y_1 , Y_2 , \ldots , Y_k \right)
\end{equation}
as $n \rightarrow \infty$, where we use the product topology on $([0,1]^2)^k$.

Following  \cite{hamblymartin}, let $a_n =  F^{(-1)}\left(1 - \frac{1}{n}\right)$,
and further let 
$b_n =a_{\binom{n+1}{2}} = F^{(-1)}\left(1 - \frac{1}{\binom{n+1}{2}}\right)$ 
and put $\widetilde{M}_i^{(n)} = \frac{M_i^{(n)}}{b_n}$. 
(As an example, if the weight distribution $F$ is Pareto($s$),
with $F(x) = 1-x^{-s}$ for $x\geq 1$, then $b_n$ grows like $n^{2/s}$. 
More generally under assumption (\ref{F}), $\lim_{n\to\infty} \frac{\log b_n}{\log n} = 2/s)$).
Then from classical results 
in extreme value theory (see for example Section 9.4 of \cite{david})
we have for any
$k \in \mathbb{N}$ that
\begin{equation} \label{Meq}
\left( \widetilde{M}_1^{(n)}, \widetilde{M}_2^{(n)}, \ldots , \widetilde{M}_k^{(n)} \right) 
\xrightarrow[]{d} \left( M_1 , M_2 , \ldots , M_k \right) 
\text{ as } n \rightarrow \infty.
\end{equation}

In this way both the locations and weights of the heaviest edges in the discrete model 
(after appropriate rescaling) are approximated by their equivalents in the continuous model.
We will show that it is the heaviest edges, which make the dominant contribution
to the passage time, and obtain the following convergence result:
\begin{theorem} \label{main1} 
The random variable $w$ in
  (\ref{w}) is almost surely finite. 
If $p=1$ and (\ref{F}) holds, then 
$\frac{{w}_{0,n}}{b_n}
  \rightarrow w$ in distribution as $n \rightarrow \infty$.
\end{theorem}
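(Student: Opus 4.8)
The plan is a dyadic decomposition of the edges by weight, combined with a geometric bound on the size of a compatible family. Since $M_k=(W_1+\dots+W_k)^{-1/s}$ with the $W_i$ i.i.d.\ exponential, the strong law gives $M_k\sim k^{-1/s}$; in particular, for every $\delta>0$ only finitely many $M_k$ exceed $\delta$, and for each integer $j\ge0$ the level set $T_j:=\{k:M_k\in(\delta 2^{-j-1},\delta 2^{-j}]\}$ has cardinality of order $\delta^{-s}2^{js}$, with exponential concentration around this value. The geometric input is that, because the $Y_i$ are i.i.d.\ random sub-intervals of $[0,1]$ whose lengths are independent of their ranking, any compatible (pairwise non-overlapping) family drawn from a given set of $m$ of the $Y_i$ has size at most $C\sqrt m$, off an event whose probability is small enough to survive a union bound over all $j$; this is an Erd\H{o}s--Szekeres / longest-increasing-subsequence estimate (sort the intervals by left endpoint; a compatible subfamily then has increasing right endpoints, and a first-moment bound on the number of increasing subsequences of length $t$ shows that $t\le C\sqrt m$ suffices), and it is exactly the mechanism producing $\sqrt{\cdot}$ scaling. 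Writing $n_j(A)=|A\cap T_j|$ for $A\in\mathcal C$, we then have $n_j(A)\le C(\delta^{-s}2^{js})^{1/2}$ for all $j$ at once, so
\[
\sum_{i\in A,\,M_i\le\delta}M_i\ \le\ \sum_{j\ge0}\delta 2^{-j}\,n_j(A)\ \le\ C\,\delta^{1-s/2}\sum_{j\ge0}2^{j(s/2-1)},
\]
and the geometric series converges precisely because $s<2$. Taking the supremum over $A$ bounds the passage time restricted to edges of weight at most $\delta$ by $C'\delta^{1-s/2}\to0$; adding the almost surely finite contribution $\sum_{k:M_k>\delta}M_k$ of the heavy edges gives $w<\infty$ a.s.

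\textbf{Convergence $w_{0,n}/b_n\to w$.} The plan is truncation at the $k$ heaviest edges. Let $w_{0,n}^{(k)}$ be the discrete passage time (\ref{w0n2}) with the sum restricted to $i\le k$, and $w^{(k)}=\sup\{\sum_{i\in A}M_i:A\in\mathcal C,\ A\subseteq\{1,\dots,k\}\}$ its continuous analogue. There are three steps. (i) For fixed $k$, $w_{0,n}^{(k)}/b_n\to w^{(k)}$ in distribution: this follows from the joint convergence (\ref{Yeq})--(\ref{Meq}) of the rescaled weights and locations of the $k$ heaviest edges together with the continuous mapping theorem, once one checks that $w^{(k)}$ is almost surely a continuity point of the map $\big((m_i,y_i)_{i\le k}\big)\mapsto\max\{\sum_{i\in A}m_i:y_i\sim y_j\ \forall i,j\in A\}$; the only discontinuities come from ties among the endpoints of the $Y_i$, which occur with probability zero in the limit. (ii) $w^{(k)}\uparrow w$ a.s.\ as $k\to\infty$: the $w^{(k)}$ are non-decreasing, and for any $A\in\mathcal C$ one has $\sum_{i\in A}M_i=\lim_k\sum_{i\in A,\,i\le k}M_i\le\lim_k w^{(k)}$, whence $\lim_k w^{(k)}=w$, finite by the first part. (iii) The truncation error is uniformly negligible: $\lim_{k\to\infty}\limsup_{n\to\infty}\PP(w_{0,n}-w_{0,n}^{(k)}>\varepsilon b_n)=0$ for each $\varepsilon>0$. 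This is the dyadic argument of the first part transported into the discrete model: by (\ref{F}) the $m$-th largest of the $\binom{n+1}{2}$ i.i.d.\ weights is of order $b_n m^{-1/s}$, and since $Y_1^{(n)},Y_2^{(n)},\dots$ is a uniformly random ordering of the edges, the same longest-increasing-subsequence bound limits by $C\sqrt m$ the size of any compatible family among the heaviest $m$ edges; summing over dyadic blocks of ranks $(2^jk,2^{j+1}k]$ produces a bound of order $k^{1/2-1/s}b_n$, which (again because $s<2$) tends to $0$ as $k\to\infty$, with probability tending to $1$ uniformly in $n$. Combining (i)--(iii) by a standard converging-together argument --- for bounded Lipschitz $f$, $\EE f(w_{0,n}/b_n)$ is close to $\EE f(w_{0,n}^{(k)}/b_n)$, which converges to $\EE f(w^{(k)})$, which converges to $\EE f(w)$ --- gives $w_{0,n}/b_n\to w$ in distribution.

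\textbf{Main obstacle.} The crux in both parts is the geometric estimate that a compatible family among $m$ of the random intervals (or $m$ random edges in the discrete model) has size $O(\sqrt m)$ with a probability strong enough to survive a union bound over the dyadic levels and, in step (iii), uniformly in $n$; the remaining ingredients are bookkeeping with regular variation and the extreme-value convergence (\ref{Yeq})--(\ref{Meq}). This is the same difficulty encountered in \cite{hamblymartin}, and the argument there can be adapted.
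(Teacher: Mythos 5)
Your overall strategy matches the paper's: truncate at the $k$ heaviest edges, show the truncated discrete and continuous passage times converge to each other (via the extreme-value convergence (\ref{Yeq})--(\ref{Meq})), show $w^{(k)}\uparrow w$, and control the tail beyond rank $k$ uniformly in $n$. The key geometric input is also the same: the maximal compatible family among $m$ edges has size $O(\sqrt m)$, obtained by observing that a pairwise non-overlapping family of intervals is, after sorting by left endpoint, an increasing subsequence of the right endpoints, so that the quantity $\Lambda_m$ in the paper is dominated by the LIS length $L_m$. Where you genuinely differ is the \emph{bookkeeping} for the tail. The paper bounds $S^k\le U_k=\sum_{i>k}\Lambda_i(M_i-M_{i+1})$ by Abel summation and then computes $\EE[U_k]<\infty$ using the explicit law $M_i\stackrel{d}{=}\mathrm{Gamma}(i,1)^{-1/s}$ and $\EE[\Lambda_i]\le c\sqrt i$; this needs only first-moment information and no concentration. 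You instead decompose dyadically by weight level $T_j=\{k:M_k\in(\delta 2^{-j-1},\delta 2^{-j}]\}$, use $M_k\sim k^{-1/s}$ to estimate $|T_j|\asymp\delta^{-s}2^{js}$, apply the $O(\sqrt{|T_j|})$ bound per level, and sum the geometric series, which converges since $s<2$. This is more explicit about the mechanism but costs you concentration estimates (for $|T_j|$ and for the LIS bound) together with a union bound across the scales $j$; the conclusion you actually need --- $\PP(w=\infty)=0$ --- then requires letting the exceptional-event probability tend to zero (e.g.\ by Borel--Cantelli across $j$, or by letting $\delta\downarrow0$), which you gesture at but do not fully nail down. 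Similarly, for the finite-$k$ convergence the paper uses the coupling in Hambly--Martin's Proposition 3.2, whereas you invoke the continuous mapping theorem after arguing that endpoint ties occur with probability zero in the limit; these are equivalent in outcome. None of this is a gap in the sense of a wrong idea, but the paper's Abel-summation route is the cleaner one precisely because it sidesteps the concentration/union-bound work your dyadic argument requires.
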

These heavy edges have length on the order of $n$. 
This is in strong contrast to the behaviour in the case $\EE[v^2]<\infty$,
where the important contribution to the passage time is given by edges of order 1. 
See Figure \ref{simfig} for an illustration of the two types of behaviour.

\begin{figure}[htpb]
\vspace{-1cm}
\begin{minipage}[t]{0.49\textwidth}
\includegraphics[width=0.99\linewidth]{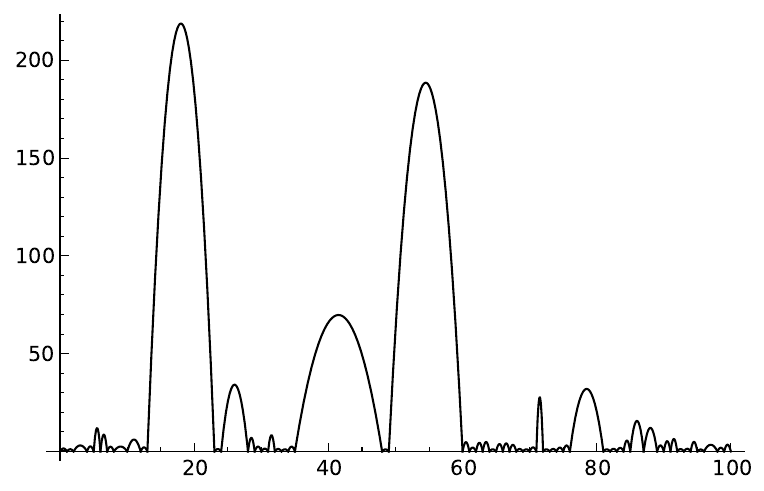}
\end{minipage}
\qquad\qquad
\begin{minipage}[t]{0.49\textwidth}
\includegraphics[width=0.99\linewidth]{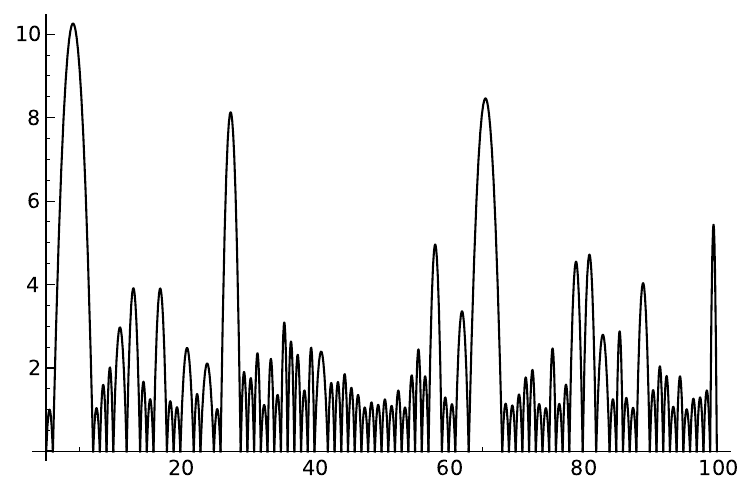}
\end{minipage}
\begin{minipage}[t]{0.49\textwidth}
\includegraphics[width=0.99\linewidth]{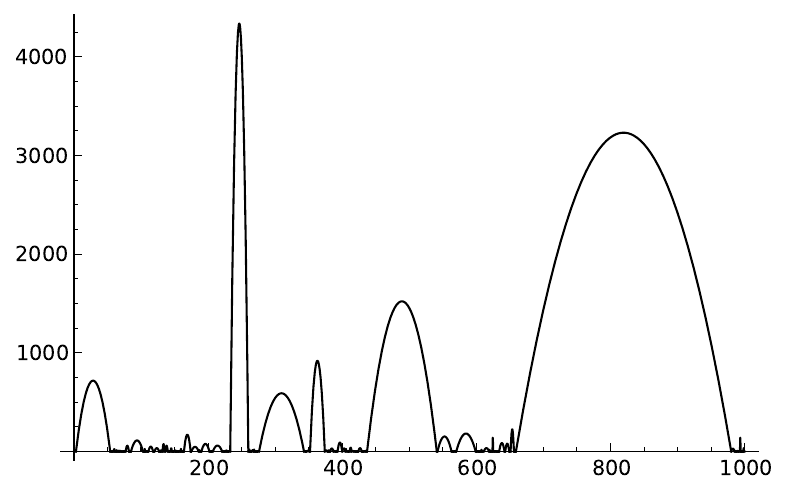}
\end{minipage}
\qquad\qquad
\begin{minipage}[t]{0.49\textwidth}
\includegraphics[width=0.99\linewidth]{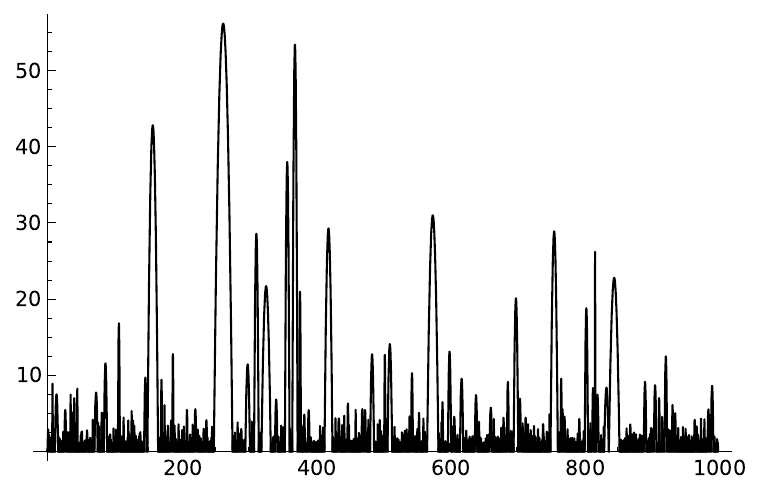}
\end{minipage}
\begin{minipage}[t]{0.49\textwidth}
\includegraphics[width=0.99\linewidth]{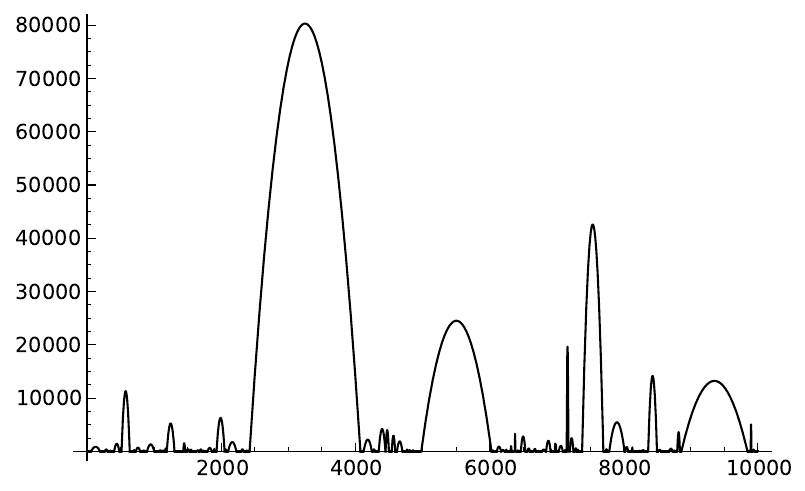}
\end{minipage}
\qquad\qquad
\begin{minipage}[t]{0.49\textwidth}
\includegraphics[width=0.99\linewidth]{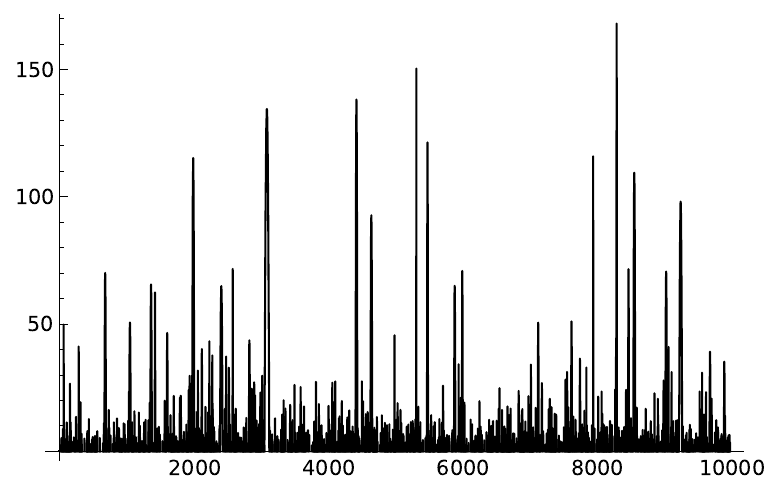}
\end{minipage}
\caption{\label{simfig} 
Simulations for $n=100, 1000$ and $10000$ 
for two weight distributions with $\PP(v>x)=x^{-s}$, $x\geq 1$;
on the left $s=1.5$ and on the right $s=2.5$.
On the left we are in the setting of Section \ref{infsec}.
The scaling limit is clearly visible; visually one 
can hardly distinguish the cases $n=100$ and $n=10000$
(see Remark \ref{main1b} about convergence of the path distribution).
The heaviest edges make an
important contribution to the total weight of the geodesic; their
length is on the order of $n$, and their weight is on the order of
$n^{2/s}$ which is also the order of the total weight of the path.
On the right, the variance of the weight distribution is finite;
the heaviest edges have both length and weight
approximately on the order of $n^{1/(s-1)}$, while the total weight
of the path is on the order of $n$ and obeys a law of large numbers.
(Paths can be generated by a simple dynamic programming algorithm.)}
\end{figure}

This scaling limit extends in a simple way to the case $p<1$,
after taking account of the fact that the total number of edges available
in the interval $[0,n]$ is now on the order of $pn^2/2$ rather than on the order of 
$n^2$:

\begin{theorem} \label{main2}
Let $p\in(0,1]$ and suppose that (\ref{F}) holds. Then
$\frac{p^{-1/s}w_{0,n}}{b_n}\to w$ in distribution as $n\to\infty$.
\end{theorem}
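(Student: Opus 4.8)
The strategy is to reduce to the case $p=1$ of Theorem~\ref{main1}; the only genuinely new ingredients are the constant $p^{1/s}$, which arises from applying regular variation to a \emph{random} number of edges, and a short connectivity estimate. Let $N_n$ be the number of edges present in $[0,n]$, so that $N_n\sim\mathrm{Binomial}\bigl(\binom{n+1}{2},p\bigr)$ and $N_n/\binom{n+1}{2}\to p$ almost surely by the strong law of large numbers; in particular $N_n\to\infty$. Conditionally on $N_n=m$, the present edges form a uniformly chosen $m$-element subset of the $\binom{n+1}{2}$ possible edges, each carrying an independent weight with distribution $F$. Write $M_1^{(n),p}\ge M_2^{(n),p}\ge\cdots$ for the ordered weights of the present edges and $Y_1^{(n),p},Y_2^{(n),p},\ldots$ for their rescaled locations in $[0,1]^2$, with ties broken uniformly at random as before.

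First I would transfer the finite-dimensional convergences (\ref{Yeq}) and (\ref{Meq}). For the locations, conditionally on $\{N_n\ge k\}$ exchangeability of the weights makes $(Y_1^{(n),p},\ldots,Y_k^{(n),p})$ a uniformly random injective $k$-tuple into the present-edge set, and averaging over the uniform present-edge set this is a uniformly random injective $k$-tuple into the full set of $\binom{n+1}{2}$ positions --- exactly the law of $(Y_1^{(n)},\ldots,Y_k^{(n)})$ in the $p=1$ model. Since $\PP(N_n\ge k)\to1$, (\ref{Yeq}) holds verbatim with $Y_i^{(n),p}$ in place of $Y_i^{(n)}$. For the weights, conditionally on $N_n=m$ classical extreme value theory gives $\bigl(M_1^{(n),p}/a_m,\ldots,M_k^{(n),p}/a_m\bigr)\xrightarrow{d}(M_1,\ldots,M_k)$ as $m\to\infty$ with $a_m=F^{(-1)}(1-1/m)$; and (\ref{F}) makes $m\mapsto a_m$ regularly varying of index $1/s$, so $a_{N_n}/b_n\to p^{1/s}$ because $N_n/\binom{n+1}{2}\to p$. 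Combining these --- a routine argument conditioning on $N_n$ and using its concentration --- yields $\bigl(M_1^{(n),p}/(p^{1/s}b_n),\ldots,M_k^{(n),p}/(p^{1/s}b_n)\bigr)\xrightarrow{d}(M_1,\ldots,M_k)$; moreover, just as when $p=1$, locations and weights are asymptotically independent, matching the construction of the continuous model.

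From here the argument follows that of Theorem~\ref{main1}. For fixed $k$, let $w_{0,n}^{(k),p}$ be the maximal weight of a pairwise-compatible set of present edges drawn from the $k$ heaviest; the convergence just established, together with the almost-sure continuity (with respect to the limit law) of the ``top-$k$'' last-passage functional, gives $w_{0,n}^{(k),p}/(p^{1/s}b_n)\xrightarrow{d}w^{(k)}$, where $w^{(k)}$ is the corresponding top-$k$ passage time in the continuous model, and $w^{(k)}\uparrow w$ almost surely. For the error term, the estimates used in Theorem~\ref{main1} to control the contribution of the light edges apply here after conditioning on $N_n$, the only change being that $\binom{n+1}{2}$ is replaced by $N_n$, which is of order $n^2$; conditionally the weights of rank $>k$ are lower order statistics of i.i.d.\ $F$-variables, so this gives $\lim_{k\to\infty}\limsup_{n\to\infty}\PP\bigl(w_{0,n}^{\mathrm{compat}}-w_{0,n}^{(k),p}>\varepsilon\,p^{1/s}b_n\bigr)=0$, where $w_{0,n}^{\mathrm{compat}}$ is the maximum over all pairwise-compatible sets of present edges. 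Since any path from $0$ to $n$ using present edges has pairwise-compatible edge set, $w_{0,n}\le w_{0,n}^{\mathrm{compat}}$; and conversely, with probability tending to $1$ the edges realising $w_{0,n}^{(k),p}$ are macroscopically separated from one another and from $0$ and $n$, so that --- the directed random graph connecting any fixed collection of macroscopically separated points with probability tending to $1$ --- they can be completed to an actual path from $0$ to $n$, whose weight is at least $w_{0,n}^{(k),p}$. Sandwiching $w_{0,n}/(p^{1/s}b_n)$ between $w_{0,n}^{(k),p}/(p^{1/s}b_n)$ (on the high-probability event) and $w_{0,n}^{\mathrm{compat}}/(p^{1/s}b_n)$, letting $n\to\infty$ and then $k\to\infty$, and invoking the standard approximation lemma for weak convergence, gives $w_{0,n}/(p^{1/s}b_n)\xrightarrow{d}w$, as claimed.

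The one real obstacle absent from the case $p=1$ is this last connectivity point: one must check that the dominant heavy edges are actually usable on a path from $0$ to $n$. This is handled by the fact that these edges have asymptotically uniform locations and hence, with high probability, leave macroscopic gaps around themselves and the endpoints, on which the directed random graph is connected with probability tending to $1$ (a union bound over the $O(k^2)$ relevant pairs suffices). All other steps are direct adaptations of the $p=1$ proof.
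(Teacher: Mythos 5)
Your proposal follows essentially the same two-step reduction to Theorem~\ref{main1} that the paper uses: the factor $p^{1/s}$ comes from applying regular variation of $a_m=F^{(-1)}(1-1/m)$ to the Binomial$\bigl(\binom{n+1}{2},p\bigr)$ count of present edges, and the connectivity issue is resolved by noting that the $k$ heaviest edges leave macroscopic gaps, on which the directed random graph is connected with probability tending to $1$. Your write-up is more detailed (in particular the exchangeability argument for the locations), but the underlying ideas and the route to the conclusion are the same as the paper's.
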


\begin{remark} \label{main1b} 
Although we don't pursue it in detail here, 
one can also prove convergence 
of the optimal path itself in the discrete model to 
that of the continuous model, using an approach 
similar to that in \cite{hamblymartin}. 
For convenience, assume that $F$ is continuous.
Then with probability 1, there exists a unique path $A^{(n)*}\in\mathcal{C}_{0,n}$
which realises the maximal passage time in (\ref{w0n2}). 
One can show that 
in the continuous model there exists a unique set $A^*\in\mathcal{C}$
achieving the supremum in (\ref{w}) (which is therefore in fact a maximum),
and 
$$ \left( \left( Y_i^{(n)} \right)_{i \in \mathbb{N}} , A^{(n)*} \right) 
\xrightarrow[n \rightarrow \infty]{d} 
\left( \left( Y_i \right)_{i \in \mathbb{N}} , A^* \right), $$
where we consider the Euclidean distance on $\mathbb{R}^2$ and the
product topology for the convergence of $\left( Y_i^{(n)} \right)_{i
  \in \mathbb{N}}$, and say that a sequence $(A_k)_{k \in
  \mathbb{N}}$ of subsets of $\mathbb{N}$ converges to a set $A
\subset \mathbb{N}$ if for every $m \in \mathbb{N}$ there exists a $K
\in \mathbb{N}$ such that $A_k \cap \left\{ 1, \ldots, m \right\} = A
\cap \left\{ 1, \ldots, m \right\}$ for all $k \geq K$.

See Theorem 4.2 of \cite{hamblymartin} for an analogous result
in the two-dimensional last-passage case. One can then proceed
to show that in fact the set of endpoints of edges used in the optimal path from 
$0$ to $n$ (rescaled by $n$) converges in distribution under the Hausdorff 
metric to the equivalent object in the continuous model. (Compare
Theorem 4.4 of \cite{hamblymartin}).
\end{remark}

\begin{remark}
Our results do not cover the case where $s=2$ and $\EE v^2=\infty$. 
Since the variance is infinite, it must be the case that
$w_{0,n}$ grows faster than linearly, as noted at the beginning of this section.
On the other hand, by comparison with the scalings obtained for $s<2$, 
the growth must be slower than $n^{1+\epsilon}$ for any $\epsilon>0$. 
It would certainly be interesting to look for appropriate scalings and
limiting distributions in this critical regime. 
\end{remark}

\section{Proofs for the model with $\mathbb{E} \left[ v^2 \right] < \infty$}

In this section we consider the case where the weights $v_{i,j}$ have
a finite second moment, i.e. $\mathbb{E} \left[v^{2}\right] <
\infty$. To avoid degeneracies we assume throughout that $v$ is not a.s.\ constant.
Our main aim is to prove Theorems \ref{SLLN}, \ref{CLT} and
\ref{longestedge}. We start with the model where $p=1$; that is, all
edges are present. First, we show that the set $\mathcal{R}$ of
renewal points is almost surely infinite. Then we
generalize this result to the case where $p \leq 1$. In the following
subsection we will use this result to prove the strong law of large
numbers (Theorem \ref{SLLN}) and the central limit theorem (Theorem
\ref{CLT}) for $w_{0,n}$ for general $p \in \left( \left. 0,1 \right]
\right.$. The next subsection will look at the behaviour of the random
variables $\ell_n$, giving the lengths of the longest edge, and $h_n$,
giving the weight of the heaviest edge, used on the geodesic from $0$
to $n$, see Theorem \ref{longestedge}. We will use these results to
comment on the behaviour of the model if $\mathbb{E} \left[ v^2
\right] < \infty$, but $\mathbb{E} \left[ v^3 \right] =
\infty$. In the last subsection we briefly discuss the case where the
edge probabilities are not constant, but depend on the length of the
edges.

\bigskip                                                                                                                                                                                                                                                                               

\subsection{Proof of Lemma \ref{Rinf} for $p=1$}

When $p=1$, we have $\alpha_{i,j} = 1$ for all $i,j$;
that is, all edges
$(i,j)$, $i,j \in \mathbb{Z}$, are present, and 
in particular there is a path between any two points.

Let $A_x = A_x^{\scriptscriptstyle{++}} \cap
A_x^{\scriptscriptstyle{-+}} \cap A_x^{\scriptscriptstyle{--}}$ be the
event that $x$ is a renewal point. We start with the following Lemma
which is simply Lemma \ref{Rinf} with the additional condition
that $\mathbb{P} \left[ A_0 \right] > 0$. After this Lemma we will
prove in Propositions \ref{independence}, \ref{PAx++} and \ref{PAx-+}
that $\mathbb{P} \left[ A_0 \right] > 0$.
\begin{lemma} \label{Rinf2}
If $\mathbb{P} \left[ A_0 \right] > 0$ then $\mathcal{R}$ is almost surely an infinite set.
\end{lemma}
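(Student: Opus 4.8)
The plan is to show that $\cR$ is infinite by exhibiting infinitely many translates of the origin at which the renewal event holds. The obstacle is that the events $(A_x)_{x\in\ZZ}$ are \emph{not} independent: the event $A_x^{\scriptscriptstyle{++}}$ looks arbitrarily far to the right and $A_x^{\scriptscriptstyle{--}}$ arbitrarily far to the left, so $A_x$ depends on infinitely many edge weights and, worse, on weights of long edges that straddle many integer points. So a naive Borel--Cantelli over i.i.d.\ blocks does not immediately apply. The way around this is to use stationarity and ergodicity rather than independence.

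First I would observe that the whole configuration $(\alpha_{i,j}v_{i,j})_{i<j}$ is stationary and ergodic under the shift $\theta\colon x\mapsto x+1$ on $\ZZ$; this holds because the weights (and, when $p<1$, the edge indicators) are i.i.d.\ and the shift acts on $\ZZ$ in the obvious way. The key point is that the event ``$x\in\cR$'' is exactly $\mathbbm{1}_{A_0}\circ\theta^x$, i.e.\ $\mathbbm{1}_{\{x\in\cR\}} = \mathbbm{1}_{A_0}(\theta^x \omega)$, since $A_x$ is defined from the configuration by the same recipe as $A_0$, translated by $x$. Hence $(\mathbbm{1}_{\{x\in\cR\}})_{x\in\ZZ}$ is a stationary ergodic $\{0,1\}$-valued process with $\PP(x\in\cR)=\PP[A_0]=:\lambda>0$ by hypothesis.

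Then I would invoke the ergodic theorem (or directly the Poincar\'e recurrence theorem / the fact that for an ergodic shift any event of positive probability occurs for infinitely many indices, both positive and negative, almost surely): since $\lambda>0$, with probability $1$ we have $\frac{1}{N}\sum_{x=0}^{N-1}\mathbbm{1}_{\{x\in\cR\}}\to\lambda>0$ as $N\to\infty$, which forces $\#(\cR\cap\{0,1,\dots,N-1\})\to\infty$; in particular $\cR$ is almost surely infinite (and by the same argument applied to $\theta^{-1}$, unbounded in both directions). This completes the proof.

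One remark on rigor: to apply the ergodic theorem one only needs $\mathbbm{1}_{A_0}$ to be a measurable function of the configuration, which it is, being a countable intersection of events each depending measurably on finitely many weights. No integrability beyond boundedness of the indicator is needed, so the step is clean. Thus the entire content of the lemma is that positive probability plus the shift-invariance of the construction upgrades, via ergodicity, to almost-sure infiniteness; the genuinely substantive work — verifying $\PP[A_0]>0$, which requires the finite-second-moment hypothesis and the careful choice of $c$ — is deferred to Propositions \ref{independence}, \ref{PAx++} and \ref{PAx-+}.
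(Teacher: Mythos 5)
Your proof is correct, but it takes a genuinely different route from the paper. You appeal to stationarity and ergodicity of the shift on the i.i.d.\ edge-indexed configuration $(\alpha_{i,j}v_{i,j})_{i<j}$, note that $\mathbbm{1}_{\{x\in\cR\}}=\mathbbm{1}_{A_0}\circ\theta^x$, and conclude via Birkhoff that the density of renewal points in $\{0,\dots,N-1\}$ converges a.s.\ to $\lambda>0$. This is clean, and it even yields slightly more than the lemma asks for (the a.s.\ positive density, which the authors in any case use later when they assert $r(n)\sim\lambda n$). The paper instead proceeds by hand: it approximates $A_0$ by an event $A_0'$ depending on only finitely many weights with $\PP[A_0\,\Delta\,A_0']<\varepsilon$, exploits the genuine independence of the translates $A_0', A_{2m}', A_{4m}',\dots$ to bound $\PP\bigl[\bigcap_{r<R}A_{2rm}^c\bigr]\leq(1-\lambda+\varepsilon)^R+R\varepsilon$, and then optimizes over $R$ and $\varepsilon$. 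The two arguments are close cousins --- the finite-coordinate approximation plus independence of distant translates is exactly how one proves the Bernoulli shift is ergodic --- so your version buys brevity and a quantitative density statement at the cost of invoking the ergodic theorem as a black box (together with the standard fact that the shift acts on the edge set with infinite orbits, hence the product measure is mixing), while the paper's version is self-contained and elementary. One point worth making explicit if you keep your route: ergodicity of the shift on the edge-indexed product space is not literally the one-dimensional Bernoulli shift, so you should say a word about why invariant events are trivial (e.g.\ the approximation argument just described, using that any finite set of edges is eventually disjoint from its translates).
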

\begin{proof}
Let $\lambda=\mathbb{P} \left[ A_0 \right]$, 
which is strictly positive by assumption. 
We can approximate the event $A_0$ by an event $A_0^{'}$ that depends only on finitely many of the $v_{i,j}$. In particular, for every $\varepsilon > 0$ there exists $m \in \mathbb{N}$ such that $A_0^{'}$ depends only on $v_{i,j}$ for $-m \leq i < j \leq m$ and $\mathbb{P} \left[ A_0 \Delta A_0^{'} \right] < \varepsilon$. By translation invariance of our model we get that the same is true for any event $A_x$, where $A_x^{'}$ is defined as the translation of $A_0^{'}$ in the natural way. We can for example choose
\begin{align*}
A_x^{'}
&= \left( \bigcap_{l=1}^{m} \left\{ w_{x,x+l} \geq cl \right\} \right) \bigcap \left( \bigcap_{l=1}^{m} \left\{ w_{x-l,x} \geq cl \right\} \right) \\
&\qquad \qquad \qquad \qquad \bigcap \left( \bigcap_{j,l=1}^{m} \left\{ \alpha_{x-j,x+l} v_{x-j,x+l} < c \left(j + l \right) \right\} \right)
\end{align*}
Then 
the events $A_0^{'}$, $A_{2m}^{'}$, $A_{4m}^{'}$, $\ldots$ are i.i.d.\ 
(since they depend on disjoint sets of edges), and we have
\begin{align*}
\mathbb{P} \left[ A_0^{'} \cup A_{2m}^{'} \cup \ldots \cup A_{2(R-1)m}^{'} \right]
&= 1 - \mathbb{P} \left[ \left( A_0^{'} \right)^c \right]^R \\
&\geq 1 - \left( \mathbb{P} \left[ A_0^c \right] + \varepsilon \right)^R \\
&= 1 - \left( 1 - \lambda + \varepsilon \right)^R.
\end{align*}
With this we get
\begin{align*}
\mathbb{P} \left[ A_0 \cup A_{2m} \cup \ldots \right.
&\left. \cup A_{2(R-1)m} \right] = 1 - \mathbb{P} \left[ A_0^c \cap A_{2m}^c \cap \ldots \cap A_{2(R-1)m}^c \right] \\
&\geq 1 - \left( \mathbb{P} \left[ \left( A_0^{'} \right)^c \cap \left( A_{2m}^{'} \right)^c \cap \ldots \cap \left( A_{2(R-1)m}^{'} \right)^c \right] + R\varepsilon \right) \\
&\geq 1 - \left( 1 - \lambda + \varepsilon \right)^R - R\varepsilon.
\end{align*}
For any $\delta > 0$ we can now first choose $R$ large enough such that $\left( 1 - \lambda + \varepsilon \right)^R < \frac{\delta}{2}$ for all small enough $\varepsilon$, and then further choose $\varepsilon > 0$ small enough such that 
also $R\varepsilon < \frac{\delta}{2}$, to get
$$ \mathbb{P} \left[ A_0 \cup A_{2m} \cup \ldots \cup A_{2(R-1)m} \right] \geq 1 - \delta. $$
Since $\delta$ was arbitrary this shows that at least one of the events $A_x$, for $x \geq 0$, holds. In the same way we can show that with probability $1$ for any fixed $y \in \mathbb{Z}$ there exists $x \geq y$ such that $A_x$ holds. This implies that with probability 1 
infinitely many of the $A_x$ hold and therefore $\mathcal{R}$ is almost surely an infinite set. 
\end{proof}

Now it remains to show that the condition of Lemma \ref{Rinf2} is satisfied, i.e. that $\mathbb{P} \left[ A_0 \right] > 0$. To be precise, we have to show that for sufficiently small $c > 0$ with $\mathbb{P} \left[ v < c \right] > 0$ we have $\mathbb{P} \left[ A_0 \right] > 0$. We will do this in four steps: first we show that the events $A_0^{\scriptscriptstyle{++}}$, $A_0^{\scriptscriptstyle{-+}}$ and $A_0^{\scriptscriptstyle{--}}$ are independent and then we will show for each of them that they hold with positive probability for a suitable $c>0$.
\begin{proposition} \label{independence}
For any fixed $x \in \mathbb{Z}$ the events $A_x^{\scriptscriptstyle{++}}$, $A_x^{\scriptscriptstyle{-+}}$ and $A_x^{\scriptscriptstyle{--}}$ are independent.
\end{proposition}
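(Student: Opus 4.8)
The plan is to identify, for each of the three events, a collection of edge weights on which it depends, and to observe that these three collections are pairwise disjoint; since $p=1$ here the graph is deterministic and the only randomness is the i.i.d.\ family $(v_{i,j})_{i<j}$, so independence of the events follows from independence of the weights.

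First I would check which weights enter each event. Any increasing path from $x$ to $x+l$ uses only edges $(i,j)$ with $x\le i<j\le x+l$, so $w_{x,x+l}$ is a function of $\{v_{i,j}:x\le i<j\le x+l\}$; taking the countable intersection over $l\ge 1$, the event $\Axpp$ is measurable with respect to $\sigma\bigl(v_{i,j}:x\le i<j\bigr)$. Symmetrically, $\Axmm$ is measurable with respect to $\sigma\bigl(v_{i,j}:i<j\le x\bigr)$. Finally, by its very definition (with $\alpha\equiv 1$) the event $\Axmp$ is a function of $\{v_{x-j,x+l}:j,l\ge 1\}$, i.e.\ of $\{v_{i,j}:i<x<j\}$.

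Next I would note that these three index sets are pairwise disjoint: every edge $(i,j)$ with $i<j$ satisfies exactly one of $i\ge x$, $j\le x$, or $i<x<j$, and these correspond respectively to the edges seen by $\Axpp$, by $\Axmm$, and by $\Axmp$. Because the $v_{i,j}$ are independent, the three $\sigma$-algebras generated by these disjoint collections are independent, and hence so are $\Axpp$, $\Axmp$ and $\Axmm$.

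I do not expect any genuine obstacle; the one point worth stating carefully is that $\Axpp$ and $\Axmm$ are \emph{countable} intersections of events, each depending on only finitely many weights, so each intersection is still measurable with respect to the full (infinite) collection listed above. The argument uses $p=1$ only via the fact that the edge-presence variables are constant; in the case $p<1$, treated separately in Section~3.2, one argues in the same way after adjoining the corresponding disjoint families of edge-indicators $\alpha_{i,j}$ to each $\sigma$-algebra.
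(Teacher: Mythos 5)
Your proposal is correct and follows essentially the same argument as the paper: $\Axpp$, $\Axmp$ and $\Axmm$ depend respectively on the pairwise disjoint edge families $\{(i,j): x\le i<j\}$, $\{(i,j): i<x<j\}$ and $\{(i,j): i<j\le x\}$, so independence follows from independence of the weights. Your added remarks on measurability and on the $p<1$ case are consistent with how the paper handles these points.
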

\begin{proof}
  As already mentioned above, the event $A_x^{\scriptscriptstyle{++}}$
  depends only on edges whose left endpoint is at least $x$,
  $A_x^{\scriptscriptstyle{-+}}$ depends only on edges with their left
  endpoint to the left of $x$ and their right endpoint to the right of
  $x$, and $A_x^{\scriptscriptstyle{--}}$ depends only on edges whose right endpoint 
is at most $x$. Since all the weights are i.i.d.\
  this implies the required independence of the events
  $A_x^{\scriptscriptstyle{++}}$, $A_x^{\scriptscriptstyle{-+}}$ and
  $A_x^{\scriptscriptstyle{--}}$.
\end{proof}
\begin{proposition} \label{PAx++} 
If $\mathbb{E} \left[ v \right] <
  \infty$, then for any $c < \mathbb{E} \left[ v \right]$ we have that
  $\mathbb{P} \left[ A_x^{\scriptscriptstyle{++}} \right] > 0$ and
  $\mathbb{P} \left[ A_x^{\scriptscriptstyle{--}} \right] > 0$.
\end{proposition}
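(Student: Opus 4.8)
The plan is to reduce both statements to a single fact about a one-dimensional random walk with positive drift, using only the nearest-neighbour path. Since $p=1$, the path $x\to x+1\to\cdots\to x+l$ always exists, so $w_{x,x+l}\ge S_l:=\sum_{k=1}^{l}v_{x+k-1,x+k}$ for every $l\ge1$; consequently
\[
\Axpp\;\supseteq\;\bigcap_{l=1}^{\infty}\{S_l\ge cl\}\;=\;\{X_l\ge 0\text{ for all }l\ge1\},
\]
where $X_l:=S_l-cl$ is a random walk with i.i.d.\ increments $v_{x+k-1,x+k}-c$ of mean $\EE[v]-c>0$. By the strong law of large numbers $X_l/l\to\EE[v]-c>0$, so $X_l\to+\infty$ almost surely.

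The main point is then to show $\PP[X_l\ge0\text{ for all }l\ge1]>0$. I would use the strict descending ladder structure: writing $\rho=\PP[\inf\{l\ge1:X_l<0\}<\infty]$ for the probability that the walk ever falls strictly below its starting level, the number $\nu$ of strict descending ladder epochs satisfies $\PP[\nu\ge k]=\rho^k$ by the i.i.d.\ increment property. If $\rho=1$ then $\nu=\infty$ a.s.; the successive ladder heights are positive i.i.d.\ random variables, so their partial sums diverge and $\liminf_l X_l=-\infty$, contradicting $X_l\to+\infty$. Hence $\rho<1$, and since $\{\nu=0\}$ is precisely $\{X_l\ge0\text{ for all }l\ge1\}$, we obtain $\PP[\Axpp]\ge\PP[\nu=0]=1-\rho>0$. (Alternatively one can deduce the same from the renewal equation satisfied by $y\mapsto\PP[\inf_{l\ge0}X_l\ge-y]$, or simply quote the classical fact that a mean-positive random walk stays nonnegative with positive probability.)

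For $\Axmm=\bigcap_{l\ge1}\{w_{x-l,x}\ge cl\}$ the argument is identical after reflecting the line at $x$: the nearest-neighbour path from $x-l$ to $x$ gives $w_{x-l,x}\ge\sum_{k=1}^{l}v_{x-k,x-k+1}$, and the same random-walk estimate applied to these edge weights (which are disjoint from those used for $\Axpp$) gives $\PP[\Axmm]>0$. Nothing in the argument depends on the particular value of $c$ beyond the positivity of the drift $\EE[v]-c$, so the conclusion holds for every $c<\EE[v]$.

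I do not expect a genuine obstacle: the only non-elementary input is the classical positive-probability-of-staying-nonnegative statement for random walks with positive mean. The one point requiring care is that we use the one-sided bound $w_{x,x+l}\ge S_l$ rather than an equality, which is precisely why the event produced by the random walk lands inside $\Axpp$ (an intersection of ``$\ge$'' events) rather than characterising it.
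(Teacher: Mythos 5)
Your proposal is correct, and it uses the same key reduction as the paper: since $p=1$, bound $w_{x,x+l}$ from below by the nearest-neighbour sum $S_l=\sum_{k=1}^l v_{x+k-1,x+k}$, so that $\Axpp$ contains the event that the drifted walk $S_l - cl$ stays nonnegative. The only difference is in how you conclude that a positive-drift random walk stays nonnegative with positive probability: the paper argues via the SLLN (choose $L$ so that $\PP[\bigcap_{l\geq L}\{S_l\geq cl\}]\geq 1/2$, then observe that the remaining finitely many conditions can be made to hold with positive probability by conditioning the first $L-1$ weights to be large), whereas you invoke the strict descending ladder structure directly. Both are standard and correct; yours is arguably the cleaner of the two, since it avoids the slightly delicate conditioning step and quotes a well-known fact in its natural form.
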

\begin{proof}
  Since all the nearest neighbour edges are present we can bound
  $w_{x,x+l}$ for any $l \in \mathbb{N}$ from below by
  $\sum_{j=0}^{l-1} v_{x+j,x+j+1}$ and the $v_{x+j,x+j+1}$ are
  i.i.d. By the strong law of large numbers we have that $\mathbb{P}
  \left[ \bigcap_{l=L}^{\infty} \sum_{j=0}^{l-1} v_{x+j,x+j+1} \geq cl
  \right] \geq \frac{1}{2}$ for large enough $L$. Since only finitely
  many of the $v_{x+j,x+j+1}$ are involved in the events
  $\sum_{j=0}^{l-1} v_{x+j,x+j+1} \geq cl$ for $l < L$ and the
  $v_{x+j,x+j+1}$ are i.i.d.\ there is positive probability that all
  events $\sum_{j=0}^{l-1} v_{x+j,x+j+1} \geq cl$ hold for $l<L$ as
  well. So $\mathbb{P} \left[ A_x^{\scriptscriptstyle{++}} \right] >
  0$.  The proof for $A_x^{\scriptscriptstyle{--}}$ is exactly the
  same.
\end{proof}
\begin{proposition} \label{PAx-+} 
Assume that $v$ is not a
  constant. If $\mathbb{E} \left[ v^2 \right] < \infty$, then for
  every $c$ such that
\[
\essinf[v]<c<\EE[v],
\]
we have $\mathbb{P} \left[
    A_x^{\scriptscriptstyle{-+}} \right] > 0$.
\end{proposition}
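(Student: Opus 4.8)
The plan is to use the independence of the edge weights directly, since (in the case $p=1$ in force in this subsection) every edge is present and so $\alpha_{x-j,x+l}=1$ for all $j,l\geq1$. Thus
\[
A_x^{\scriptscriptstyle{-+}}=\bigcap_{j,l=1}^{\infty}\bigl\{v_{x-j,x+l}<c(l+j)\bigr\},
\]
and as $(j,l)$ ranges over $\mathbb{N}^2$ the edges $(x-j,x+l)$ are pairwise distinct, so the weights $v_{x-j,x+l}$ are i.i.d.\ and the events in the intersection are mutually independent. By continuity of measure this gives
\[
\PP\bigl[A_x^{\scriptscriptstyle{-+}}\bigr]=\prod_{j,l=1}^{\infty}\PP\bigl[v<c(l+j)\bigr].
\]

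First I would check that each factor is strictly positive. Since $c>\essinf[v]\geq0$ we have $c(l+j)\geq 2c>\essinf[v]$ for every $j,l\geq1$, and by the very definition of the essential infimum this forces $\PP[v<c(l+j)]>0$. Recalling that an infinite product $\prod_n p_n$ of numbers $p_n\in(0,1]$ is strictly positive if and only if $\sum_n(1-p_n)<\infty$, it remains only to show that
\[
\sum_{j,l=1}^{\infty}\PP\bigl[v\geq c(l+j)\bigr]<\infty.
\]

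To estimate this sum I would group the pairs by the value of $k=l+j$: for each $k\geq2$ there are exactly $k-1$ pairs $(j,l)$ with $j,l\geq1$ and $j+l=k$, whence
\[
\sum_{j,l=1}^{\infty}\PP\bigl[v\geq c(l+j)\bigr]=\sum_{k=2}^{\infty}(k-1)\,\PP[v\geq ck]\leq\sum_{k=1}^{\infty}k\,\PP[v\geq ck]\leq\frac{\EE[v^2]}{c^2},
\]
the last inequality being the standard comparison $\sum_{k\geq1}k\,\PP[X\geq k]\leq\EE[X^2]$ applied to the nonnegative random variable $X=v/c$. Since $\EE[v^2]<\infty$ by hypothesis the sum converges, so the product above is positive and $\PP[A_x^{\scriptscriptstyle{-+}}]>0$; translation invariance makes this value independent of $x$.

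The argument is essentially routine and I do not expect a genuine obstacle. The only points needing care are bookkeeping ones: justifying the passage to the infinite product (continuity of measure together with independence), and ensuring no factor vanishes — this last is exactly where the hypothesis $c>\essinf[v]$ is used, and one sees that $c<\EE[v]$ plays no role here (it is needed only in Proposition~\ref{PAx++}). The same reasoning goes through when $p<1$: an absent edge $(x-j,x+l)$ automatically satisfies $\alpha_{x-j,x+l}v_{x-j,x+l}=-\infty<c(l+j)$, so conditioning on the (still infinite) collection of present edges straddling $x$ and applying the estimate above to that collection again yields $\PP[A_x^{\scriptscriptstyle{-+}}]>0$.
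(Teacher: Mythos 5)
Your proof is correct and takes essentially the same route as the paper: expand $\PP[A_x^{\scriptscriptstyle{-+}}]$ as an infinite product using independence, group the pairs $(j,l)$ by $k=j+l$, note that $c>\essinf[v]$ guarantees no factor vanishes, and reduce convergence of the product to convergence of $\sum_k k\,\PP[v\geq ck]$, which follows from $\EE[v^2]<\infty$. The only cosmetic difference is that the paper phrases the final step via $\log(1-x)\sim -x$ inside an exponential, while you invoke the $\prod p_n>0\Leftrightarrow\sum(1-p_n)<\infty$ criterion directly; these are equivalent. Your final remark that $c<\EE[v]$ is not needed for this proposition (only $c>\essinf[v]$ is) is also an accurate observation, and your brief handling of the $p<1$ case is a correct aside, though the paper defers that situation to a later subsection.
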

\begin{proof}
Note that 
$\essinf[v]<\EE[v]$ since we assume that $v$ is not a.s.\ constant.
We have
\begin{align*}
\mathbb{P} \left[ A_x^{\scriptscriptstyle{-+}} \right]
&= \prod_{j,l = 1}^{\infty} \mathbb{P} \left[ v_{x-j,x+l} < c \left( l+j \right) \right] \\
&= \prod_{i=2}^{\infty} \mathbb{P} \left[ v < c i \right]^{i-1} \\
&= e^{\sum_{i=2}^{\infty} (i-1) \ln \left( 1 - \mathbb{P} \left[ v \geq ci \right] \right)}.
\end{align*}
The exponent is negative, and the RHS is positive if and only the sum converges to
a finite quantity rather than to $-\infty$. Since $\PP(v<ci)>0$ for all $i$
(because $c>\essinf[v]$), and since $\log(1-x)\sim -x$ as $x\to 0$, this holds if and only if 
$\sum i\PP(v\geq ci)$ is finite, which in turn holds if and only if the variance
of $v$ is finite. 
\end{proof}
\begin{remark} \label{empty}
If, on the other hand, $\mathbb{E} \left[ v^2 \right] = \infty$ then $\mathbb{P} \left[ A_x^{\scriptscriptstyle{-+}} \right] = 0$ and therefore $\mathcal{R}$ is empty almost surely.
\end{remark}

\begin{proof}[Proof of Lemma \ref{Rinf}:]
This follows directly from Lemma \ref{Rinf2} and Propositions \ref{independence}, \ref{PAx++} and \ref{PAx-+}.
\end{proof}

\subsection{Proof of Lemma \ref{Rinf} for $p<1$}\label{generalpsection}

Let us now consider the case where $p < 1$. We say that a point $x \in \mathbb{Z}$ is a 
\textit{strongly connected point} if $x$ is connected to every other point by a path. Here we do not consider 
the weights of the edges, so the paths do not have to be optimal. We denote the set of strongly connected points 
by $\mathcal{S}$. In the previous section every point was a strongly connected point since $p=1$. The 
first three from the following four  
results about the strongly connected points have all been shown in Lemmas 5 and 7 in \cite{denisovfosskonstantopoulos},
and the latter one is an exponential analogue of Lemma 6, with a very similar proof. 
\begin{itemize}
 \item the probability that $0$ is a strongly connected point is strictly positive for any $p>0$
 \item there are almost surely infinitely many strongly connected points
 \item the sequence of strongly connected points forms a stationary renewal process
 \item if we let $\ldots, \tau_{-1}, \tau_0, \tau_1, \tau_2, \ldots$ be the sequence of strongly connected points, where $\tau_0$ is the smallest non-negative element of $\mathcal{S}$, then for some $\alpha>0$,
\begin{equation}\label{strongexp}
\EE \left[ e^{\alpha \tau_0} \right] < \infty, \text{ and }
\EE \left[ e^{\alpha (\tau_{i+1}-\tau_i)} \right] < \infty \text{ for all } i.  
\end{equation}
\end{itemize}
By $w_{k,l}$ we denote again the weight of the geodesic from $k$ to $l$. This might now be $-\infty$ if there exists no path between $k$ and $l$ and we are therefore taking the supremum over an empty set. However, if $x$ is a strongly connected point, then $w_{x-j,x+l} > 0$ for all $j,l \in \mathbb{N}$ since we know that there exists a path from any $x-j$ to $x$ and from $x$ to any $x+l$. For $x \in \mathbb{Z}$ let $m(x)$ be the index of the largest strongly connected point such that $\tau_{m(x)} < x$.

\bigskip

The definition of the renewal points is the same as before -- see
(\ref{Ax++}), (\ref{Ax-+}) and (\ref{Ax--}) (now
$\alpha_{i,j} = -\infty$ if the edge $(i,j)$ is not present). By
definition we have that if $x$ is not a strongly connected point then
$w_{x,x+l} = -\infty$ for some $l \geq 1$ or $w_{x-j,x} = -\infty$ for
some $j \geq 1$. So $x$ can only be a renewal point if it is a
strongly connected point. An equivalent of Lemma \ref{Rinf2} still
holds in the case where $p<1$ and we want to prove again that the
condition for Lemma \ref{Rinf2} ($\mathbb{P} \left[ A_0 \right] > 0$)
holds. This will give us Lemma \ref{Rinf}. Again it will be enough to
show that the three events $A_x^{\scriptscriptstyle{++}}$,
$A_x^{\scriptscriptstyle{-+}}$ and $A_x^{\scriptscriptstyle{--}}$ are
independent and that all of them happen with positive probability. The
independence follows directly from the same argument as in Proposition
\ref{independence}. Let $\gamma > 0$ be the density of strongly
connected points and $\delta = \mathbb{E} \left[ w_{\tau_0,\tau_1}
\right]$.
\begin{proposition} \label{PAx++2} 
  If $0<c<\gamma\delta$, then
  $\mathbb{P} \left[ A_x^{\scriptscriptstyle{++}} \right] > 0$,
  $\mathbb{P} \left[ A_x^{\scriptscriptstyle{-+}} \right] > 0$ and
  $\mathbb{P} \left[ A_x^{\scriptscriptstyle{--}} \right] > 0$.
\end{proposition}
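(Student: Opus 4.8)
The plan is to argue as in Propositions~\ref{PAx++} and~\ref{PAx-+}, with the strongly connected points $\mathcal{S}$ and a renewal-reward estimate taking over the role played by the nearest-neighbour edges in the case $p=1$ (which need no longer be present here). By translation invariance take $x=0$; since $A_0^{\scriptscriptstyle{++}}$ and $A_0^{\scriptscriptstyle{--}}$ are mirror images of one another it is enough to bound $\PP[A_0^{\scriptscriptstyle{++}}]$ and $\PP[A_0^{\scriptscriptstyle{-+}}]$ from below. We may assume $\delta<\infty$, since otherwise the hypothesis $c<\gamma\delta$ is vacuous; in fact $\delta<\infty$ always, because conditionally on $\mathcal{S}$ the passage time $w_{\tau_0,\tau_1}$ is dominated by a sum of $\binom{\tau_1-\tau_0+1}{2}$ independent copies of $v$, which is integrable by $\EE[v]<\infty$ and (\ref{strongexp}). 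Moreover, since $v$ is not a.s.\ constant, $\gamma\delta$ is strictly smaller than the essential supremum of $v$, so that $\PP[v\geq c]>0$ for every $c<\gamma\delta$.

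For $A_0^{\scriptscriptstyle{-+}}$ the computation is exactly that of Proposition~\ref{PAx-+}, using that an absent edge gives $\alpha_{-j,l}=-\infty$:
\[
\PP\bigl[A_0^{\scriptscriptstyle{-+}}\bigr]=\prod_{j,l=1}^{\infty}\bigl(1-p\,\PP[v\geq c(l+j)]\bigr)=\prod_{i=2}^{\infty}\bigl(1-p\,\PP[v\geq ci]\bigr)^{i-1}.
\]
Because $p<1$, every factor lies in $[1-p,1]$, so the product is strictly positive precisely when $\sum_i i\,\PP[v\geq ci]<\infty$, that is, precisely when $\EE[v^2]<\infty$, which we are assuming. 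Note that, in contrast to the case $p=1$ treated in Proposition~\ref{PAx-+}, here the condition $c>\essinf[v]$ is not needed: the factor $1-p>0$ already keeps every term away from zero.

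For $A_0^{\scriptscriptstyle{++}}$ we condition on the positive-probability event $\{0\in\mathcal{S}\}$. On this event $0=\tau_0$, and by the regenerative structure of $\mathcal{S}$ the interval lengths $\tau_{i+1}-\tau_i$, $i\geq0$, are i.i.d.\ with exponential moments by (\ref{strongexp}), and the passage times $w_{\tau_i,\tau_{i+1}}$, $i\geq0$, are i.i.d.\ with mean $\delta$. Since any two strongly connected points are joined by a path, concatenating geodesics through the strongly connected points lying in $[0,l]$ gives
\[
w_{0,l}\;\geq\;\Sigma(l):=\sum_{i\geq0\,:\,\tau_{i+1}\leq l}w_{\tau_i,\tau_{i+1}}\qquad\text{for every }l.
\]
By the strong law of large numbers for renewal-reward processes $\Sigma(l)/l\to\gamma\delta>c$ almost surely, so $w_{0,l}\geq cl$ for all large $l$. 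It remains to deal with the finitely many small values of $l$ and with the first few (possibly long) renewal intervals; this is done as in Proposition~\ref{PAx++}. The centred process $l\mapsto\Sigma(l)-cl$ has positive drift $\gamma\delta-c$ along the strongly connected points, while the amount by which it can decrease inside a single interval is at most $c(\tau_{i+1}-\tau_i)$, which by (\ref{strongexp}) grows only logarithmically along $\tau_1,\tau_2,\dots$; a standard random-walk argument (using $\PP[v\geq c]>0$ so that the first intervals can build up a sufficient reserve) then shows that $\Sigma(l)\geq cl$ for all $l\geq1$ with positive conditional probability. Hence $\PP[A_0^{\scriptscriptstyle{++}}\mid 0\in\mathcal{S}]>0$, and therefore $\PP[A_0^{\scriptscriptstyle{++}}]>0$.

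The main obstacle is this last gluing step. For $p=1$ one has the \emph{strictly increasing} lower bound $w_{0,l}\geq\sum_{j=0}^{l-1}v_{j,j+1}$, whereas here the natural lower bound $\Sigma(l)$ is constant between consecutive strongly connected points, so one must rule out a single long flat stretch pulling $\Sigma(l)/l$ below $c$; the exponential tail bound (\ref{strongexp}) on the interval lengths is precisely what prevents this and makes the random-walk argument go through.
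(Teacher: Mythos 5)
Your overall strategy is the same as the paper's: reduce $A_0^{\scriptscriptstyle{++}}$ and $A_0^{\scriptscriptstyle{--}}$ to a renewal--reward estimate along the strongly connected points, and treat $A_0^{\scriptscriptstyle{-+}}$ by the convergence of the product $\prod_i (1-p\,\PP[v\geq ci])^{i-1}$, which is positive iff $\sum_i i\,\PP[v\geq ci]<\infty$, i.e.\ iff $\EE[v^2]<\infty$. Your observation that the factor $1-p>0$ makes the lower constraint $c>\essinf[v]$ unnecessary for $A_0^{\scriptscriptstyle{-+}}$ when $p<1$ is correct, and is in fact slightly cleaner than the paper's two-case split into $\{j,l\geq L\}$ and $\{j,l<L\}$ with the crude bound $(1-p)^{(L-1)^2}$.

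The soft spot, which you yourself flag as ``the main obstacle,'' is the passage from ``$w_{0,l}\geq cl$ eventually (a.s.)'' to ``$w_{0,l}\geq cl$ for all $l\geq 1$ with positive probability.'' You gesture at a random-walk reserve-building argument using the exponential tails of $\tau_{i+1}-\tau_i$ and the assertion $\PP[v\geq c]>0$, but this is not actually carried out and has several rough edges: the claim that $\gamma\delta<\text{ess\,sup}\,v$ strictly ``since $v$ is not a.s.\ constant'' is not justified (what one actually has is $\gamma\delta\leq\text{ess\,sup}\,v$ since a geodesic across a renewal interval has at most $\tau_1-\tau_0$ edges and $\EE[\tau_1-\tau_0]=1/\gamma$, and $c<\gamma\delta$ then already gives $c<\text{ess\,sup}\,v$); moreover, a reserve/drift argument would need to be set up carefully to conclude anything about positive probability rather than just a.s.\ behaviour. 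The paper avoids all of this with a one-line finite-modification argument: once $\PP[w_{0,l}\geq cl\text{ for all }l\geq L]>0$ for some fixed $L$, one obtains a realisation in $A_0^{\scriptscriptstyle{++}}$ by altering only the finitely many edges inside $[0,L]$ (making them present with weights just above $c$, which is possible since $c<\text{ess\,sup}\,v$), and this alteration can only increase every $w_{0,l}$, so it preserves the already-good large-$l$ behaviour; since changing finitely many independent coordinates to a positive-probability configuration preserves positivity of probability, $\PP[A_0^{\scriptscriptstyle{++}}]>0$ follows. You would do well to replace your sketched random-walk argument with this cleaner local-modification step.
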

\begin{proof}
  First look at the events $A_x^{\scriptscriptstyle{++}}$ and
  $A_x^{\scriptscriptstyle{--}}$. Now not all the nearest neighbour
  edges are present, but we can use the strongly connected points to
  get a similar bound to the one in the proof of Proposition
  \ref{PAx++}. Without loss of generality assume that $x=0$ and note
  that $m(0)=-1$ from the definition. For any $l>\tau_0$ we can write
\begin{equation} \label{sum2}
w_{0,l} \geq w_{0,\tau_0} + \sum_{j=1}^{m(l)} w_{\tau_{j-1},\tau_j} + w_{\tau_{m(l)},l}.
\end{equation}
Fix a $c < \gamma \delta$. Since the strongly connected points form a
stationary renewal process, independent of the weights, and the
density $\gamma$ of strongly connected points is strictly positive,
the terms in the sum are i.i.d.\ and we have both $\frac{m(l)}{l} \rightarrow\gamma$ 
almost surely as $l \rightarrow \infty$, and $\frac1M \sum_{j=1}^M w_{\tau_{j-1}, \tau_j}
\to \delta$ as $M\to\infty$.
So in fact
\[
\frac{1}{l}\sum_{j=1}^{m(l)} w_{\tau_{j-1}, \tau_j}\to\gamma\delta \text{ a.s.\ as } l\to\infty.
\]
Then since $c<\gamma\delta$ by assumption, we have that for some $L$, the event 
\[
w_{0,l}\geq cl \text{ for all }l\geq L
\]
has positive probability. 
But if this event occurs, then we can obtain a realisation for which 
\[
w_{0,l}\geq cl \text{ for all }l\geq 1
\]
occurs by altering the values of only finitely many edges. 
Hence that event also has positive probability, and so $\PP(\Axpp)>0$ as desired.
In exactly the same way, also $\PP(\Axmm)>0$.

Now look at the event $A_x^{\scriptscriptstyle{-+}}$. With the same
arguments as in the previous section we get that for large $L$
\begin{equation} \notag
\mathbb{P} \left[ \bigcap_{j,l=L}^{\infty} \left\{ \alpha_{x-j,x+l} v_{x-j,x+l} \leq c(l+j) \right\} \right] > 0.
\end{equation}
Since there is a probability of $1-p$ for each edge not to be present, i.e. $\alpha_{i,j} = -\infty$, we get
\begin{equation} \notag
\mathbb{P} \left[ \bigcap_{j,l=1}^{L-1} \left\{ \alpha_{x-j,x+l} v_{x-j,x+l} \leq c(l+j) \right\} \right] \geq (1-p)^{(L-1)^2}.
\end{equation}
Hence $\mathbb{P} \left[ A_x^{\scriptscriptstyle{-+}} \right] = \mathbb{P} \left[ \bigcap_{j,l=1}^{\infty} \left\{ \alpha_{x-j,x+l} v_{x-j,x+l} \leq c(l+j) \right\} \right] > 0$ also.
\end{proof}
So we have shown that the condition in Lemma \ref{Rinf2} is still satisfied in the case where $p<1$ and therefore Lemma \ref{Rinf} holds for $p<1$ as well.

To unify the conditions on $c$ for the cases $p=1$ and $p<1$, 
note that if $p=1$ then $\gamma=1$, and that $\EE[v]\leq \delta$. 
Then we can put together the results of the last two sections to give 
the following:
\begin{lemma}\label{combinedlemma}
Let $p\in(0,1]$. If
\begin{equation}\label{ccond1}
\gamma \essinf[v] < c < \gamma \EE[v]
\end{equation}
then $\lambda=\PP(A_0)>0$ and the set $\cR$ is infinite with probability 1.
\end{lemma}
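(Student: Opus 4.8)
The plan is to deduce Lemma \ref{combinedlemma} from the two cases already treated, the real content being a single comparison showing that hypothesis (\ref{ccond1}) implies the condition on $c$ used in each case. Throughout, recall that $\lambda=\PP(A_0)=\PP(\Axpp\cap\Axmp\cap\Axmm)$ and that the three events $\Axpp$, $\Axmp$, $\Axmm$ are independent (Proposition \ref{independence}; the argument, based on the fact that these events depend on disjoint families of edges whose weights and presence indicators are independent, is the same for $p<1$). Hence it is enough to check that each of the three factors is strictly positive under (\ref{ccond1}). Observe also that, since $\gamma>0$ and $\essinf[v]\ge0$, (\ref{ccond1}) forces $c>0$, and since $v$ is not a.s.\ constant the interval $(\gamma\essinf[v],\gamma\EE[v])$ is nonempty.

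For $p=1$ every point is strongly connected, so $\gamma=1$, and we may take $\tau_i=i$; hence $w_{\tau_0,\tau_1}=w_{0,1}=v_{0,1}$ and $\delta=\EE[v]$. Then (\ref{ccond1}) reads $\essinf[v]<c<\EE[v]$, which is exactly the hypothesis of Propositions \ref{PAx++} and \ref{PAx-+}; these give $\PP(\Axpp),\PP(\Axmm),\PP(\Axmp)>0$, so $\lambda>0$.

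For $p<1$ the one extra ingredient I would establish is $\EE[v]\le\delta$, where $\delta=\EE[w_{\tau_0,\tau_1}]$. To prove it, condition on the graph $G$ alone: this determines the set of strongly connected points, hence $\tau_0$ and $\tau_1$, but is independent of the edge weights. Since $\tau_0$ is connected to $\tau_1$, there is a directed path from $\tau_0$ to $\tau_1$; choosing one (say the lexicographically-first such vertex sequence) as a deterministic function of $G$, it has at least one edge, whose first edge $e_1$ satisfies $w_{\tau_0,\tau_1}\ge v_{e_1}$ because all weights are positive. As the weights are i.i.d.\ with law $F$ and independent of $G$, we get $\EE[w_{\tau_0,\tau_1}\mid G]\ge\EE[v_{e_1}\mid G]=\EE[v]$, and taking expectations gives $\delta\ge\EE[v]$. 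Consequently (\ref{ccond1}) yields $0<c<\gamma\EE[v]\le\gamma\delta$, which is precisely the hypothesis of Proposition \ref{PAx++2}; this gives $\PP(\Axpp),\PP(\Axmp),\PP(\Axmm)>0$, and again $\lambda>0$.

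Finally, with $\lambda=\PP(A_0)>0$ established in both cases, the conclusion that $\cR$ is almost surely infinite follows from Lemma \ref{Rinf2}, whose proof uses only translation invariance of the model and the approximation of $A_0$ by an event depending on finitely many edge weights, so it applies verbatim when $p<1$. The only step that is not bookkeeping is the inequality $\EE[v]\le\delta$; I do not expect it to be a serious obstacle, but it is the point where an actual argument (rather than just matching hypotheses) is required, and it is also exactly what is needed to see that the stated range for $c$ is nonempty and consistent for every $p\in(0,1]$.
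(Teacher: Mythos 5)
Your proof is correct and follows essentially the same route as the paper: reduce to the hypotheses of Propositions \ref{PAx++}, \ref{PAx-+} (for $p=1$) and \ref{PAx++2} (for $p<1$) via the observations that $\gamma=1$ when $p=1$ and $\EE[v]\le\delta$ in general, then invoke Lemma \ref{Rinf2}. The only difference is that you supply an explicit proof of $\EE[v]\le\delta$ (conditioning on $G$ and bounding $w_{\tau_0,\tau_1}$ below by the first edge weight of a chosen path), whereas the paper simply asserts this inequality; your argument for it is sound.
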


\subsection{Proofs of the SLLN and CLT for general $p \in \left( 0,1 \right]$}

In the previous two sections we have shown that 
under the condition (\ref{ccond1}) on $c$, the set $\mathcal{R}$ of renewal points is
infinite. Now we want to prove a strong law of large numbers and a
central limit theorem for the random variable $w_{0,n}$, see Theorems
\ref{SLLN} and \ref{CLT}. As before we denote the points in
$\mathcal{R}$ by $\ldots, \Gamma_{-1}, \Gamma_0 , \Gamma_1, \ldots$,
where $\Gamma_0$ is the smallest non-negative element of
$\mathcal{R}$. Evaluating the function $w$ at the renewal points
$\Gamma_n$ gives the following equation, related to (\ref{sumw0n}):
\begin{proposition}
For all $m < n$ we have
$$ w_{\Gamma_m , \Gamma_n} = w_{\Gamma_m , \Gamma_{m+1}} + \ldots + w_{\Gamma_{n-1} , \Gamma_n}. $$
\end{proposition}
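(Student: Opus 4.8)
The plan is to deduce this immediately from Lemma \ref{basicrenewallemma} by a short induction on $n-m$. Recall that the renewal points are indexed in increasing order, so $\Gamma_m < \Gamma_{m+1} < \cdots < \Gamma_n$, and each $\Gamma_i$ lies in $\cR$. The base case $n=m+1$ is the trivial identity $w_{\Gamma_m,\Gamma_{m+1}} = w_{\Gamma_m,\Gamma_{m+1}}$.

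For the inductive step, suppose the claim holds whenever the index gap is less than $n-m$, and consider $m<n$ with $n\geq m+2$. Since $\Gamma_{m+1}\in\cR$ and $\Gamma_m < \Gamma_{m+1} < \Gamma_n$, Lemma \ref{basicrenewallemma} applied with $i=\Gamma_m$, $x=\Gamma_{m+1}$, $j=\Gamma_n$ gives
\[
w_{\Gamma_m,\Gamma_n} = w_{\Gamma_m,\Gamma_{m+1}} + w_{\Gamma_{m+1},\Gamma_n}.
\]
Applying the inductive hypothesis to $w_{\Gamma_{m+1},\Gamma_n}$ expands the second term as $w_{\Gamma_{m+1},\Gamma_{m+2}} + \cdots + w_{\Gamma_{n-1},\Gamma_n}$, and combining the two displays yields the stated telescoping identity. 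Equivalently, one can simply apply Lemma \ref{basicrenewallemma} successively at the interior renewal points $\Gamma_{m+1},\dots,\Gamma_{n-1}$.

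There is no real obstacle here: the only thing to check is that each intermediate $\Gamma_{m+1},\dots,\Gamma_{n-1}$ is genuinely a renewal point strictly between $\Gamma_m$ and $\Gamma_n$, which is immediate from the indexing convention for $\cR$, after which Lemma \ref{basicrenewallemma} does all the work. (Implicitly this also uses that all the quantities involved are finite, which holds because each $\Gamma_i$ is in particular a strongly connected point, so $w_{\Gamma_{i-1},\Gamma_i}>-\infty$.)
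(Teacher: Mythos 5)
Your proof is correct and is essentially the same argument the paper intends: the paper's one-line proof ("this follows directly from the definition of the renewal points and (\ref{renewal})") is exactly the iterated application of Lemma \ref{basicrenewallemma} at the intermediate renewal points, which you have spelled out as an induction. The added remark on finiteness via strong connectivity is a harmless clarification and does not change the substance.
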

\begin{proof}
This follows directly from the definition of the renewal points and (\ref{renewal})
\end{proof}
We now want to use the fact stated in this Proposition to prove a strong law of large numbers and a central limit theorem for the random variable $w_{0,n}$. If $w_{\Gamma_m , \Gamma_{m+1}}$, $m \geq 0$ are independent, then for $n\geq\Gamma_0$ we can write
\begin{equation} \label{iid}
w_{0,n} = w_{0,\Gamma_0} + \sum_{i=1}^{r(n)} w_{\Gamma_{i-1} , \Gamma_i} + w_{\Gamma_{r(n)},n}
\end{equation}
where $r(n) = \max \left\{ m : \Gamma_m < n \right\}$ and, since $w_{\Gamma_{i-1},\Gamma_i}$, $i \geq 1$, are i.i.d.\, use then the standard strong law of large numbers and central limit theorem (under moment conditions for the variance of $w_{\Gamma_{i-1} , \Gamma_i}$) applied to the sum in (\ref{iid}) to get corresponding results for $w_{0,n}$. Note that since the density of renewal points $\lambda = \mathbb{P} \left[ A_0 \right]$ is strictly positive we have that $r(n) \sim \lambda n$ for large $n$. So first we want to show that $w_{\Gamma_{i-1} , \Gamma_i}$, $i \geq 1$ are indeed independent.

Define $\mathcal{C}_k = \left( \Gamma_{k} - \Gamma_{k-1}, v_{\Gamma_{k-1} + n,\Gamma_{k-1} + i}, \alpha_{\Gamma_{k-1} + n,\Gamma_{k-1} + i} : 0 \leq n < i \leq \Gamma_{k} - \Gamma_{k-1} \right)$, $k \in \mathbb{Z}$. Then these cycles have a regenerative structure in the following sense:
\begin{lemma} \label{cycles}
The cycles $\left( \mathcal{C}_k , k \in \mathbb{Z} \right)$ are independent and $\left( \mathcal{C}_k , k \in \mathbb{Z} - \left\{ 0 \right\} \right)$ are identically distributed. The process $\left( \Gamma_n \right)_{n \in \mathbb{Z}}$ forms a stationary renewal process. 
\end{lemma}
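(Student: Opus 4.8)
The plan is to exploit the key structural fact (Lemma \ref{basicrenewallemma}): a renewal point $x\in\cR$ is determined only by edges lying entirely in $[x,\infty)$ (for $\Axpp$), entirely in $(-\infty,x]$ (for $\Axmm$), or straddling $x$ (for $\Axmp$). The decisive observation is that an edge straddling a renewal point $x$ is, by $\Axmp$, never used on any geodesic, so its weight is irrelevant to all the $w$-values; and the edges straddling $x$ are precisely the ones not contained within a single renewal interval. Hence the data $\cC_k$ attached to the $k$-th interval depends only on edges lying inside $[\Gamma_{k-1},\Gamma_k]$, and the event $\{\Gamma_{k-1},\Gamma_k\text{ are consecutive renewal points, with }\Gamma_k-\Gamma_{k-1}=m\}$ can be expressed in terms of edges in $[\Gamma_{k-1},\infty)$, $(-\infty,\Gamma_k]$, and the straddling edges — but with a careful accounting the whole cycle decomposition becomes a function of disjoint blocks of i.i.d.\ weights.

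First I would make this precise via a \emph{strong Markov / renewal-theoretic} argument. Condition on the position of $\Gamma_0$ (and more generally on the full point set $\cR$). I claim that, given $\cR$, the collections of edge-weights $(v_e : e\subset[\Gamma_{k-1},\Gamma_k])$ for different $k$ are conditionally independent, and for $k\neq 0$ have the law of an i.i.d.\ family conditioned on the corresponding interval being a ``renewal interval'' (i.e.\ containing no renewal point in its interior, with renewal points at both ends). This follows because: (i) the event $\{x\in\cR\}$ for all $x$ in a block $[\Gamma_{k-1},\Gamma_k]$ is, modulo the straddling edges which play no role in any $w_{i,j}$ with $i,j$ in the block, measurable with respect to the weights in that block together with the weights in the complementary half-lines; (ii) one sets up the standard decomposition: sum over all possible interval endpoint configurations, and on each the required events split as a product over blocks because the defining events $\Axpp,\Axmp,\Axmm$ factor across the blocks exactly as in Proposition \ref{independence}. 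The independence of the straddling edges (which are i.i.d.\ and used nowhere) lets one discard them cleanly. I would phrase the conditioning as: let $T_0=\Gamma_0$, and recursively let the $k$-th block be the weights encountered after $\Gamma_{k-1}$; then $(\Gamma_k-\Gamma_{k-1}, \text{block }k)$ is obtained by running an i.i.d.\ sequence of edge-weight columns until the next renewal point occurs, which by the defining form of $\Axpp$ (depending only on edges to the right) and $\Axmm$ (a ``backward'' condition that, once the right half is fixed, is determined) is a genuine stopping construction.

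Concretely I would organise the proof as follows. Step 1: record that $\{x\in\cR\}$ is determined by three independent pieces — $\Axpp(x)$ from edges with left endpoint $\ge x$, $\Axmm(x)$ from edges with right endpoint $\le x$, $\Axmp(x)$ from straddling edges — and that $\Axmp$ guarantees no geodesic uses a straddling edge (Lemma \ref{basicrenewallemma}), so $w_{i,j}$ for $\Gamma_{k-1}\le i\le j\le\Gamma_k$ depends only on weights of edges inside $[\Gamma_{k-1},\Gamma_k]$. Step 2: define the $k$-th cycle $\cC_k$ as above and show, by summing over all possible values of the $(\Gamma_j)$ and using the product structure of the defining events, that for any $k_1<\dots<k_r$ and any test sets, $\PP(\cC_{k_1}\in B_1,\dots,\cC_{k_r}\in B_r)=\prod_i \PP(\cC_{k_i}\in B_i)$, with the factors for $k_i\ne 0$ all equal — this is where translation invariance of the underlying i.i.d.\ weights and edge-indicators enters, exactly as in Lemma \ref{Rinf2}. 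Step 3: deduce that $(\Gamma_n)_{n\in\ZZ}$ is a renewal process (the increments $\Gamma_{k}-\Gamma_{k-1}$, $k\ne0$, are i.i.d.\ since they are the first coordinates of the i.i.d.\ cycles), and that it is stationary because $\cR$ is a translation-invariant point process on $\ZZ$ with finite positive intensity $\lambda=\PP(0\in\cR)$, so its Palm/stationary version is exactly the law we have (one can also invoke the standard fact that a translation-invariant renewal-type point process on $\ZZ$ with $\PP(0\in\cR)\in(0,1)$ is automatically the stationary renewal process with that marginal).

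\textbf{Main obstacle.} The subtle point is the \emph{backward} event $\Axmm$: it depends on edges in the left half-line, so the cycles are not simply built by a forward stopping rule on an independent sequence, and one must be careful that conditioning on the \emph{global} renewal structure (which involves both directions) does not introduce hidden correlations between cycles. The resolution — and the heart of the argument — is that $\{x\in\cR\}$ still factors into three pieces on \emph{disjoint} edge sets, so once we sum over all admissible $(\Gamma_j)$-configurations, each configuration's probability is a product over blocks (plus the shared straddling-edge factors, which cancel against the normalisation because they are independent of everything relevant and of the block contents). Making this summation bookkeeping rigorous, and confirming that the conditioning really does render the blocks independent rather than merely pairwise uncorrelated, is the step I expect to require the most care; it is essentially the discrete-time analogue of the classical ``cycle decomposition of a regenerative process'' and I would model the write-up on the corresponding lemma in \cite{denisovfosskonstantopoulos}.
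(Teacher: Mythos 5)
Your high-level plan is in the right spirit, but the claimed resolution of the ``main obstacle'' you identify is exactly where the argument has a genuine gap. You write that the defining events $\Axpp, \Axmp, \Axmm$ ``factor across the blocks exactly as in Proposition~\ref{independence}.'' That proposition shows factorization on disjoint edge sets for a \emph{single fixed} $x$: edges to the right of $x$, edges straddling $x$, edges to the left of $x$. But for $y$ strictly inside a block, say $\Gamma_{k-1}<y<\Gamma_k$, the event $A_y^{--}$ requires $w_{y-j,y}\geq cj$ for \emph{all} $j\geq 1$, including $j>y-\Gamma_{k-1}$; it therefore involves edges whose left endpoint lies arbitrarily far to the left of $\Gamma_{k-1}$, crossing any number of block boundaries. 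Similarly $A_y^{++}$ and $A_y^{-+}$ reach arbitrarily far to the right and across $\Gamma_{k-1}$ respectively. So the events do \emph{not} factor over blocks, and your ``sum over configurations; each is a product over blocks'' does not hold as stated.

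What is actually needed — and what the paper supplies — is the key lemma that, \emph{conditional on $A_x$}, the event $A_y$ (for $y>x$) is equivalent to a restricted event $\widetilde{A}_y$ depending only on edges with both endpoints $\geq x$. Proving this requires a concrete computation using $A_x$ and the decomposition $w_{y-j,y}=w_{y-j,x}+w_{x,y}$ from Lemma~\ref{basicrenewallemma}: for $j>y-x$ one bounds $w_{y-j,y}$ from below by $c(j-(y-x))+c(y-x)=cj$ using $A_x^{--}$ and $\widetilde A_y^{--}$, and similarly one shows the straddling-edge bound in $A_y^{-+}$ for $j>y-x$ follows from $A_x^{-+}$. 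Only after this equivalence can one combine it with the conditional independence of $\mathcal F_x^-$ and $\mathcal F_x^+$ given $A_x$ (which you essentially have) to conclude that the cycles to the right of $\Gamma_{m-1}$ are independent of everything to the left, and then iterate. Your sketch names the obstacle but replaces this calculation with an assertion (``factors over blocks'', ``shared straddling-edge factors cancel'') that is not correct, so as written the argument does not close the gap it identifies.
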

\begin{proof}
We start with the following observation about the effect the presence of a renewal point at site $x \in \mathbb{Z}$ has on the weights to the left and to the right of $x$. Let $\mathcal{F}_{x}^+$ be the sigma-algebra generated by the $\left( v_{i,j}, \alpha_{i,j} : x \leq i < j \right)$ and let $\mathcal{F}_{x}^-$ be the sigma-algebra generated by the $\left( v_{i,j}, \alpha_{i,j} : i < j \leq x \right)$. These two sigma-algebras are independent as all our weights are independent. But this is still true even if we know that there is a renewal point at $x$. For any $B^- \in \mathcal{F}_x^-$, $B^+ \in \mathcal{F}_x^+$ we have
\begin{align*}
\mathbb{P}
&\left[ \left. B^- \cap B^+ \right| A_x^{\scriptscriptstyle{--}} \cap A_x^{\scriptscriptstyle{-+}} \cap A_x^{\scriptscriptstyle{++}} \right] \\
&\qquad \qquad = \frac{\mathbb{P} \left[ B^- \cap B^+ \cap A_x^{\scriptscriptstyle{--}} \cap A_x^{\scriptscriptstyle{-+}} \cap A_x^{\scriptscriptstyle{++}} \right]}{\mathbb{P} \left[ A_x^{\scriptscriptstyle{--}} \cap A_x^{\scriptscriptstyle{-+}} \cap A_x^{\scriptscriptstyle{++}} \right]} \\
&\qquad \qquad = \frac{\mathbb{P} \left[ B^- \cap A_x^{\scriptscriptstyle{--}} \right] \mathbb{P} \left[ A_x^{\scriptscriptstyle{-+}} \right] \mathbb{P} \left[ B^+ \cap A_x^{\scriptscriptstyle{++}} \right]}{\mathbb{P} \left[A_x^{\scriptscriptstyle{--}} \right] \mathbb{P} \left[A_x^{\scriptscriptstyle{-+}} \right] \mathbb{P} \left[A_x^{\scriptscriptstyle{++}} \right]} \\
&\qquad \qquad = \mathbb{P} \left[ B^- \left| A_x^{\scriptscriptstyle{--}} \right. \right] \mathbb{P} \left[ B^+ \left| A_x^{\scriptscriptstyle{++}} \right. \right] \\
&\qquad \qquad = \mathbb{P} \left[ B^- \left| A_x^{\scriptscriptstyle{--}} \cap A_x^{\scriptscriptstyle{-+}} \cap A_x^{\scriptscriptstyle{++}} \right. \right] \mathbb{P} \left[ B^+ \left| A_x^{\scriptscriptstyle{--}} \cap A_x^{\scriptscriptstyle{-+}} \cap A_x^{\scriptscriptstyle{++}} \right. \right]
\end{align*}
This shows that having a renewal point at $x$ does not introduce any dependence between the weights to the left and the weights to the right of $x$. Now we want to show that if $A_x$ holds we can determine where all the renewal points to the right of $x$ are only by looking at edges with both endpoints to the right of $x$. So assume again that $A_x$ holds. For $y > x$ (and fixed $x$) define the event $\widetilde{A}_y =  A_y^{\scriptscriptstyle{++}} \cap \widetilde{A}_y^{\scriptscriptstyle{-+}} \cap \widetilde{A}_y^{\scriptscriptstyle{--}}$ with
\begin{equation} \notag
\widetilde{A}_y^{\scriptscriptstyle{-+}} = \bigcap_{l \geq 1, 1 \leq j \leq y-x} \left\{ \alpha_{y-j,y+l} v_{y-j,y+l} \leq c(l+j) \right\}
\end{equation}
and
\begin{equation} \notag
\widetilde{A}_y^{\scriptscriptstyle{--}} = \bigcap_{1 \leq j \leq y-x} \left\{ w_{y-j,y} \geq cj \right\}.
\end{equation}
The events $A_y^{\scriptscriptstyle{++}}$, $\widetilde{A}_y^{\scriptscriptstyle{-+}}$ and $\widetilde{A}_y^{\scriptscriptstyle{--}}$ all depend only on edges to the right of $x$. Now we want to show that conditioned on $A_x$ the event $A_y$ holds if and only if the event $\widetilde{A}_y$ holds. On $A_x$ we have
\begin{equation} \label{A_x}
w_{x-j,x} \geq cj \text{ and } \alpha_{x-j,x+l} v_{x-j,x+l} \leq c(l+j) \text{ and } w_{x,x+l} \geq cl \text{ for all } j,l \geq 1.
\end{equation}
Assume that $\widetilde{A}_{x+k}$ holds. Then we have
\begin{align} 
&w_{x+k-j,x+k} \geq cj \text{ and } \alpha_{x+k-j,x+k+l} v_{x+k-j,x+k+l} \leq c(l+j) \notag \\
&\qquad \text{ and } w_{x+k,x+k+l} \geq cl \text{ for all } 1 \leq j \leq k,l \geq 1. \label{widetildeA}
\end{align}
We have to show that we can conclude from this that $A_{x+k}$ holds, i.e.
\begin{align} 
&w_{x+k-j,x+k} \geq cj \text{ and } \alpha_{x+k-j,x+k+l} v_{x+k-j,x+k+l} \leq c(l+j) \notag \\
&\qquad \text{ and } w_{x+k,x+k+l} \geq cl \text{ for all } j,l \geq 1. \label{A}
\end{align}
So take $j > k$. Then we have
\begin{align*}
w_{x+k-j,x+k}
&= w_{x+k-j,x} + w_{x,x+k} \qquad \text{(since } x \text{ is a renewal point)} \\
&\geq c(k-j) + kj \qquad (\text{by } (\ref{A_x}) \text{ and } (\ref{widetildeA})) \\
&= cj
\end{align*}
and also for any $l \geq 1$
\begin{align*}
\alpha_{x+k-j,x+k+l} v_{x+k-j,x+k+l}
&= \alpha_{x-(j-k),x+k+l} v_{x-(j-k),x+k+l} \\
&\leq c(k+l+j-k) \qquad (\text{by } (\ref{A_x})) \\
&= c(l+j)
\end{align*}
So (\ref{A}) holds. This implies that $A_{x+k}$ holds if $\widetilde{A}_{x+k}$ holds. The other implication is obvious.

This shows that for any $m \geq 1$ the cycles $\mathcal{C}_m$, $\mathcal{C}_{m+1}$, $\ldots$ are independent of the position of $\Gamma_{m-1}$ and everything to the left of $\Gamma_{m-1}$. With similar arguments to the ones above we can also show that for any $m \geq 1$ the cycles $\mathcal{C}_{-m}$, $\mathcal{C}_{-m-1}$, $\ldots$ are independent of the position of $\Gamma_{-m}$ and everything to the right of $\Gamma_{-m}$. Overall we get that the cycles $\left( \mathcal{C}_k , k \in \mathbb{Z} \right)$ are independent and, by symmetry, that the cycles $\left( \mathcal{C}_k , k \in \mathbb{Z} - \left\{ 0 \right\} \right)$ are identically distributed.

Then $\Gamma_0, \Gamma_1, \ldots$ and $\Gamma_{-1}, \Gamma_{-2}, \ldots$ are non-stationary (delayed) renewal processes and translation invariance implies that $\left( \Gamma_n \right)_{n \in \mathbb{Z}}$ is a stationary renewal process.
\end{proof}
With this result we can already prove the strong law of large numbers.
\begin{proof}[Proof of Theorem \ref{SLLN}:]
As above, let $r(n)$ be the label of the last renewal point to the left of $n$, 
so that $\Gamma_{r(n)} < n \leq \Gamma_{r(n)+1}$. Then if $n\geq\Gamma_0$,  
\begin{equation}\label{upperlower1}
w_{0,\Gamma_0}+\sum_{i=1}^{r(n)}w_{\Gamma_{i-1}, \Gamma_i}
\leq w_{0,n}
\leq w_{0,\Gamma_0}+\sum_{i=1}^{r(n)+1}w_{\Gamma_{i-1}, \Gamma_i}.
\end{equation}

First we find a linear upper bound for $w_{0,n}$. 
Since the edges in the path from 0 to $n$ cannot overlap, and the sum
of their lengths is $n$, 
we have
\begin{align*}
w_{0,n}
&\leq n+\sum_{0\leq x<y\leq n}[v_{x,y}-(y-x)]_+\\
&\leq n+\sum_{0\leq x<n}Z_x
\end{align*}
where we define $Z_x=\sum_{y>x} [v_{x,y}-(y-x)]_+$.
Note that $Z_x$ are i.i.d.\ and non-negative with 
\begin{align*}
\EE Z_x&=\EE \sum_{y>0}\left[v_{0,y}-y\right]_+\\
&\leq \frac12\EE v^2\\
&<\infty.
\end{align*}
So $\limsup w_{0,n}/n<\infty$ a.s.\, and so from 
the left-hand inequality in (\ref{upperlower1}),
we also have 
\[
\limsup \frac1n \sum_{i=1}^{r(n)}w_{\Gamma_{i-1}, \Gamma_i}<\infty\, a.s.
\]
But $r(n)/n\to\lambda$ a.s.\ as $n\to\infty$,
and the terms $w_{\Gamma_{i-1}, \Gamma_i}$ are i.i.d.\ and 
non-negative for $i\geq 1$. So $\EE w_{\Gamma_{i-1}, \Gamma_i}$ must
be finite. Then finally using again 
the fact that $r(n)/n\to\lambda$ a.s., and the 
law of large numbers on both sides of (\ref{upperlower1}),
we get the a.s. convergence $w_{0,n}/n\to\lambda^{-1}\EE w_{\Gamma_{i-1}, \Gamma_i} $.

To prove the convergence in $\mathcal{L}^1$, we remark that in the particular
case $p=1$ the required convergence (both a.s. and $\mathcal{L}^1$) follows directly from 
Kingman's subadditive ergodic theorem,
since $w_{0,n}$ is superadditive. Then, for $p<1$, we may use the following monotonicity argument.

Note that $w_{0,n} \equiv w_{0,n}(p)$ is an increasing function of $p$ and, in
particular, 
$$
0\le w_{0,n}^{+}(p) \le w_{0,n}^{+}(1).
$$
Since $w_{0,n}^{+1}/n$ converges to a finite constant in $\mathcal{L}^1$, this sequence
is uniformly integrable, and so is the sequence $w_{0,n}^{+}(p)$, for any $p<1$. This
and the a.s. convergence imply convergence in $\mathcal{L}^1$. 
\end{proof}

\begin{remark}
One can show that for non-constant weights there is a strict inequality $C > \widehat{C} \mathbb{E} \left[ v \right]$ where $\widehat{C}$ is the constant corresponding to $C$ in the case where $v \equiv 1$.
\end{remark}

\bigskip

In order to prove the central limit theorem, 
we will need to establish that $\Gamma_1-\Gamma_0$,
the length of a typical renewal interval, has finite variance.
By general results about renewal processes (see for example
Chapter 1, Section 4 in \cite{baccellibremaud}, in particular Remark 4.2.1),
this is equivalent to the property that the ``residual renewal time'' $\Gamma_0$
has finite expectation. In order to obtain that $\EE[\Gamma_0]$ is finite,
an additional condition on the distribution of $v$ is required; 
instead of just a second moment we need that the third moment of $v$ is finite.

\begin{lemma} \label{exp} Suppose $\mathbb{E} \left[ v^3 \right] < \infty$. 
If 
\begin{equation}\label{ccond2}
\gamma\,\essinf[v] < c < 
\gamma\EE\left[ \min_{\tau_0\leq i<j\leq \tau_1} v_{i,j} \right],
\end{equation}
then $\mathbb{E} \left[ \Gamma_{0} \right] < \infty$.
\end{lemma}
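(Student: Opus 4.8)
The plan is to build an explicit random upper bound $\widehat{\Gamma}_0 \geq \Gamma_0$ which decomposes as a sum of finitely many i.i.d.-type pieces, each with finite expectation, so that $\EE[\Gamma_0] \leq \EE[\widehat\Gamma_0] < \infty$. The starting point is the observation that $0 \in \cR$ fails only if at least one of the three events $\Axpp$, $\Axmp$, $\Axmm$ fails at $0$, and similarly at each candidate site $x$; so $\Gamma_0$ is the first $x\geq 0$ at which all three hold. I would first restrict attention to the strongly connected points $\tau_0 < \tau_1 < \dots$ (using the facts imported from \cite{denisovfosskonstantopoulos}, in particular the exponential moment bound \eqref{strongexp}), since a renewal point must be strongly connected. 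The task is then to control how far one must search through the $\tau_i$ before finding one that is a renewal point.

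The main step is to find, for a strongly connected site $\tau_k$, auxiliary random variables that bound the ``reach'' of the violations of each of the three events. For $\Axmp$ at $\tau_k$: an edge $(\tau_k-j,\tau_k+l)$ violates the condition only if $v_{\tau_k-j,\tau_k+l} \geq c(l+j)$, and since the tail of $v$ now has a finite \emph{third} moment, $\sum_{m} m^2 \PP(v\geq cm) < \infty$; this lets me define a finite random variable measuring the largest span of any ``bad'' edge straddling a given site, and the finite third moment is exactly what makes its expectation (suitably summed over sites) finite. For $\Axpp$ and $\Axmm$: on $A_x$ one needs $w_{x,x+l}\geq cl$ for all $l$; using the lower bound \eqref{sum2} via strongly connected points together with the SLLN for the i.i.d.\ increments $w_{\tau_{j-1},\tau_j}$ and the condition $c < \gamma\,\EE[\min_{\tau_0\leq i<j\leq\tau_1} v_{i,j}]$ (which is what guarantees a genuine positive ``drift'' surplus even after conditioning forces the minimal edge in a cycle to be large), I can bound the first site beyond which the running minimum of $w_{x,x+l}/l - c$ stays positive by a random variable with finite expectation — here the moment condition enters through the fluctuations of the partial sums. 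Combining: define $\widehat\Gamma_0$ as the first strongly connected point lying beyond the reach of all these violations, and bound it by summing the relevant auxiliary variables over the $O(\Gamma_0)$ intervening strongly connected points; a renewal-reward / Wald-type argument plus \eqref{strongexp} then yields $\EE[\widehat\Gamma_0]<\infty$.

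The translation from ``$\Gamma_0$ finite expectation'' to the lemma's actual phrasing is immediate once the bound is in hand, but one subtlety must be handled with care: the events defining a renewal point involve infinitely many constraints ($l,j$ ranging over all of $\mathbb{N}$), so the auxiliary variables must be shown to be almost surely finite \emph{and} integrable, which is where the quantitative inputs $\EE[v^3]<\infty$ (for $\Axmp$) and the strict inequality in \eqref{ccond2} (for $\Axpp,\Axmm$) are used; a crude second-moment bound would give finiteness but not integrability.

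I expect the main obstacle to be organising the bound for $\Axpp$ (and by symmetry $\Axmm$): one must show that the first-passage-type quantity $\inf\{L : w_{0,l}\geq cl \text{ for all } l\geq L\}$ has finite expectation given only a third moment on $v$, which requires controlling the lower fluctuations of the superadditive sequence $w_{0,l}$ uniformly in $l$, and carefully arguing that the conditioning implicit in looking for the \emph{first} renewal point (rather than an unconditioned site) does not destroy integrability. The precise choice of upper bound in \eqref{ccond2}, involving $\EE[\min_{\tau_0\leq i<j\leq\tau_1}v_{i,j}]$ rather than simply $\EE[v]$, is presumably dictated exactly by this argument, and getting that constant right is the delicate point.
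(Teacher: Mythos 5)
Your proposal correctly identifies where the two hypotheses enter -- the finite third moment for controlling the ``reach'' of edges violating $\Axmp$, and the precise upper bound in \eqref{ccond2} for guaranteeing a positive drift in $w_{x,x+l}-cl$ along the strongly connected points -- and you correctly note that a mere second-moment bound would give a.s.\ finiteness but not integrability. However, the proposal has a genuine gap in the combining step: the passage ``bound it by summing the relevant auxiliary variables over the $O(\Gamma_0)$ intervening strongly connected points'' is circular, since the number of summands is exactly the quantity you are trying to control. You also define $\widehat\Gamma_0$ as ``the first strongly connected point lying beyond the reach of all these violations,'' but such a point can itself be the starting point of new violations (the events $\Axpp,\Axmp,\Axmm$ must be re-checked at each candidate site), so this one-shot definition does not produce a renewal point and does not yield a terminating procedure.

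What the paper does to close this gap is construct an intermediate \emph{auxiliary renewal process} $\cU$, the set of $x$ at which $A_x^{\scriptscriptstyle{--}}$ alone holds (which contains $\cR$ and is still a stationary renewal process, with interval lengths having exponential moments under \eqref{ccond2}, Proposition \ref{Enu}). One then iterates: from a $\cU$-point $\sigma_k$, define the random ``reach'' $\mu_k$ as the first radius $d$ at which $A_{\sigma_k,d}^{\scriptscriptstyle{-+}}\cap A_{\sigma_k,d}^{\scriptscriptstyle{++}}$ fails; $\mu_k=\infty$ with probability $\beta>0$ (and then $\sigma_k\in\cR$), while on $\{\mu_k<\infty\}$ one jumps to the next $\cU$-point at or beyond $\sigma_k+\mu_k$ and repeats. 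This makes the number of iterations $K$ geometric; the total number of $\cU$-intervals traversed is $L=\sum_{j<K}\mu_j$ (an N-sum of i.i.d.\ terms conditioned on being finite), and Proposition \ref{Emu} shows $\EE[\mu\,|\,\mu<\infty]<\infty$ using $\EE[v^3]<\infty$. Finally $\Gamma_0\le\rho_L$, and a random-sum bound (Proposition \ref{ransum}) converts finite mean of $L$ and finite variance of the $\cU$-interval length into $\EE[\rho_L]<\infty$. Without some version of this two-level structure -- an auxiliary renewal process cheaper than $\cR$, together with a geometrically-many-step search along it -- your proposal does not give a non-circular bound, even though the per-step estimates you envisage (the finite-third-moment tail sum, the SLLN along $\tau_i$) are exactly the right ingredients.
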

\begin{proof}
Recall that the $\tau_r$ are the points of the renewal process of strongly connected points, 
defined at the beginning of Section \ref{generalpsection}, 
with $\dots<\tau_{-1}<0\leq \tau_0<\tau_1<\dots$. So $(\tau_0, \tau_1)$ 
is a typical renewal interval. $\gamma$ is the density of strongly connected points.
Since the process of strongly connected points is independent of the weights $v_{i,j}$,
and the weight distribution is not a.s.\ constant, the RHS of (\ref{ccond2}) is 
strictly greater than the LHS so the set of ``good'' values of $c$ is non-empty. 
Also note that (\ref{ccond2}) implies (\ref{ccond1}), so the conclusion of 
Lemma \ref{combinedlemma} applies.

  We will use an algorithmic construction of $\Gamma_0$ similar to the
  construction in \cite{denisovfosskonstantopoulos} to prove that the
  expectation $\mathbb{E} \left[ \Gamma_{0} \right]$ is finite. Here
  we will not construct $\Gamma_0$ itself, but an upper bound for
  it. We will use the following events
  $A_{x,d}^{\scriptscriptstyle{++}}$,
  $A_{x,d}^{\scriptscriptstyle{-+}}$ and
  $A_{x,d}^{\scriptscriptstyle{--}}$ that are similar to
  $A_x^{\scriptscriptstyle{++}}$, $A_x^{\scriptscriptstyle{-+}}$ and
  $A_x^{\scriptscriptstyle{--}}$ but restricted to certain regions:
\begin{equation} \notag
A_{x,d}^{\scriptscriptstyle{++}} = \bigcap_{l=1}^{d} \left\{ w_{x,x+l} \geq cl \right\},
\end{equation}
\begin{equation} \notag
A_{x,d}^{\scriptscriptstyle{-+}} = \bigcap_{1 \leq l \leq d, j \geq 1} \left\{ \alpha_{x-j,x+l} v_{x-j,x+l} < c(l+j) \right\}
\end{equation}
and
\begin{equation} \notag
A_{x,d}^{\scriptscriptstyle{--}} = \bigcap_{j=1}^{d} \left\{ w_{x-j,x} \geq cj \right\}.
\end{equation}
We now introduce another process $\cU$ related to the renewal process $\cR$. 
Define
\begin{equation} \label{U}
\mathcal{U} = \left\{ x \in \mathbb{Z} : A_{x}^{\scriptscriptstyle{--}} \text{ holds} \right\}.
\end{equation}
A point in $\mathcal{U}$ clearly has to be connected to every point to its left.
In \cite{denisovfosskonstantopoulos} the authors refer to points 
that are connected to every point to their left as \textit{silver points}.
We immediately have $\cR\subseteq\cU$.

We will write $\ldots < \rho_{-2} < \rho_{-1} < 0 \leq \rho_0 < \rho_1 < \ldots$ for
the sequence of points in $\mathcal{U}$, where $\rho_0$ is the smallest
non-negative element of $\mathcal{U}$. 

The following result about $\cU$ is analogous to Lemma \ref{cycles} about $\cR$, 
but is much more straightforward to prove.
For $k\in\ZZ$, define 
\[
\mathcal{D}_k = \left( \rho_{k} - \rho_{k-1}, v_{\rho_{k-1} + n,\rho_{k-1} + i}, 
\alpha_{\rho_{k-1} + n,\rho_{k-1} + i} : 0 \leq n < i \leq \rho_{k} - \rho_{k-1} \right).
\] 
\begin{lemma} \label{cycles2}
The cycles $\left( \mathcal{D}_k , k \in \mathbb{Z} \right)$ are independent and $\left( \mathcal{D}_k , k \in \mathbb{Z} - \left\{ 0 \right\} \right)$ are identically distributed. The process $\cU=\left( \rho_n \right)_{n \in \mathbb{Z}}$ forms a stationary renewal process. 
\end{lemma}
\begin{proof}
Note that if $A_x^{--}$ holds, and $y>x$, then $A_y^{--}$ holds if and only if
$A_{y,y-x}^{--}$ holds. 
Hence given $x\in\cU$, we can find the next $y>x$ such that $y\in\cU$ by finding the smallest $y>x$
such that $A_{y,y-x}^{--}$ holds, and to determine whether the event $A_{y,y-x}^{--}$ holds
we only have to consider edges with both endpoints in the interval $[x,y]$. 
The regenerative structure described in the lemma follows immediately.
\end{proof}

Next we define
\begin{equation} \notag
\mu = \inf \left\{ d > 0 : \mathbbm{1}_{A_{0,d}^{\scriptscriptstyle{-+}} \cap A_{0,d}^{\scriptscriptstyle{++}}} = 0 \right\}.
\end{equation}
The random variable $\mu$ is the smallest distance $d > 0$ such that at least one of $A_{0,d}^{\scriptscriptstyle{-+}}$ and $A_{0,d}^{\scriptscriptstyle{++}}$ fails. Note that $\mu$ may be infinite; this is the case 
precisely if $A_0^{-+}$ and $A_0^{++}$ hold, so that
\begin{equation} \notag
\beta \stackrel{def}{=} \mathbb{P} \left[ \mu = \infty \right] = \mathbb{P} \left[ A_{0}^{\scriptscriptstyle{-+}} \cap A_{0}^{\scriptscriptstyle{++}} \right] > 0.
\end{equation}
The idea of the proof can best be explained using Figure \ref{G0} below.
\begin{figure}[htpb]
\begin{center}
\resizebox{0.85\textwidth}{!}{\input 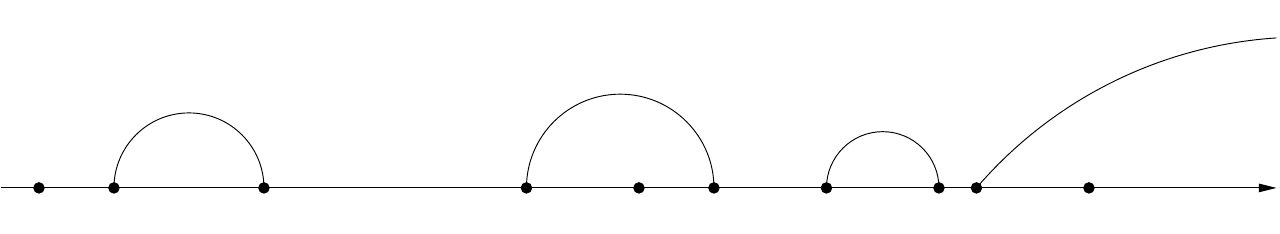tex_t}
\caption{\label{G0} Construction of the process $\left( \sigma_n \right)_{n \in \mathbb{Z}}$.
In this case $K=3$; the point $\sigma_3=\rho_4$ is a renewal point, and
provides an upper bound for $\Gamma_0$.}
\end{center}
\end{figure}
We define $\sigma_0=\rho_0$. Now recursively, for each $k\geq0$ we define
\[
\mu_k = \theta_{\sigma_k}\mu = 
\inf \left\{ d > 0 : \mathbbm{1}_{A_{\sigma_k,d}^{\scriptscriptstyle{-+}} \cap A_{\sigma_k,d}^{\scriptscriptstyle{++}}} = 0 \right\},
\]
and 
\[
\sigma_{k+1}=\inf\{x\in\cU: x\geq \sigma_{k}+\mu_k\}.
\]
The set $\{\sigma_0, \sigma_1, \dots\}$ is a subset of $\{\rho_0, \rho_1, \dots\}$. 
We continue until we reach a $K$ such that $\mu_K$ is infinite. Then the corresponding 
$\sigma_K$ must be a point of $\cR$. For certainly $\sigma_K\in\cU$, so the event $A_{\sigma_K}^{--}$ holds.
But also $\mu_K=\infty$, which by definition of $\mu_k$ implies also that
$A_{\sigma_K}^{-+}$ and $A_{\sigma_K}^{++}$ hold. 

In particular $\Gamma_0\leq\sigma_K$, which will serve as the upper bound we require.

Now it also follows from the regenerative properties in Lemma \ref{cycles2} above that
the random variables $\mu_k$ are i.i.d., and their common distribution is the same as that of $\mu$.
So $K=\inf\{k:\mu_k=\infty\}$ is a geometric random variable with parameter $\beta$. 
Also, given $K$, the random variables $\mu_k, 0\leq k<K$ are i.i.d.\
and their common distribution is that of $\mu$ conditioned on $\mu<\infty$
(in particular this does not depend on $K$). 

Since each renewal interval $(\rho_{j-1}, \rho_j)$ has length at least one, 
we also have that $\sigma_k\leq \rho_L$ where $L=\sum_{j=0}^{K-1}\mu_j$. 

We can write $\rho_L$ in the following way
\begin{equation} \label{rhoL}
\rho_L = \rho_0 + \sum_{j=1}^{L} \rho_j - \rho_{j-1}
\end{equation}
with i.i.d.\ $\rho_j - \rho_{j-1}$, $j=1,2,\ldots$. We will use the following Proposition to show that the expectation of $\rho_L$ is finite.
\begin{proposition} \label{ransum}
Let $X_1, X_2, X_3, \ldots$ be an i.i.d.\ sequence of non-negative random variables with finite variance and let $N$ be a non-negative integer valued random variable with finite mean. Then the expectation of $S_N = X_1 + \ldots + X_N$ is finite.
\end{proposition}
\begin{proof}
For $a > \mathbb{E} \left[ X_1
  \right]$, the expectation of
$$ R_a = \sup_{n \in \mathbb{N}} \left( S_n - an \right) $$
is finite whenever $X_1$ has finite variance. This result 
is familiar in the context of queueing theory, saying
that the expected waiting time in a single-server queue
is finite if the service time distribution has finite variance
(see for example Section 2.2 in  \cite{baccellibremaud}).
Therefore,
$$ \mathbb{E} \left[ S_N \right] 
\leq \mathbb{E} \left[ R_a \right] + a \mathbb{E} \left[ N \right] $$
is finite.
\end{proof}
It is therefore enough to show that $\mathbb{E} \left[ L \right] <
\infty$ and $\mathbb{E} \left[ \left( \rho_1 - \rho_0 \right)^2
\right] < \infty$ (note that $\mathbb{E} \left[ \rho_0 \right] <
\infty$ if $\mathbb{E} \left[ \left( \rho_1 - \rho_0 \right)^2 \right]
< \infty$; this is the same renewal process result
we quoted just before Lemma \ref{exp}.). 
The expecation of $L$ is finite if $\mathbb{E} \left[
  \left. \mu \right| \mu < \infty \right] < \infty$ and this will be
proved in Proposition \ref{Emu}. In order to show that $\mathbb{E}
\left[ \left( \rho_1 - \rho_0 \right)^2 \right] < \infty$ we will show
that the following random variable $\nu$, which satisfies $\nu
\stackrel{d}{=} \rho_1 - \rho_0$, has exponential moments for
appropriate $c$:
\begin{equation} \notag
\nu = \inf \left\{ x > 0 : \mathbbm{1}_{A_{x,x}^{\scriptscriptstyle{--}}} = 1 \right\}
\end{equation}
\begin{proposition} \label{Enu}
If $c$ satisfies (\ref{ccond2}) then
$$ \mathbb{E} \left[ e^{\alpha \nu} \right]  < \infty \qquad \text{for some } \alpha > 0 $$
\end{proposition}

\begin{proof}
As above the $\tau_k$ are the strongly connected points with $\tau_{-1}<0\leq \tau_0$,
and $m(x)$ satisfies $\tau_{m(x)}<x\leq\tau_{m(x)+1}$. We immediately have 
\begin{align}
\mathbb{P} \left[ \nu > x \right]
&\leq \mathbb{P} \left[ \nu > \tau_{m(x)} \right] \notag \\
&\leq \mathbb{P} \left[ \left(  A_{\tau_{0}, \tau_{0}}^{\scriptscriptstyle{--}}\right)^c \cap \ldots \cap 
\left( A_{\tau_{m(x)}, \tau_{m(x)}}^{\scriptscriptstyle{--}} \right)^c \right]. \label{firstineq}
\end{align}
Now we claim that if none of the events 
$A_{\tau_{0}, \tau_{0}}^{\scriptscriptstyle{--}},
\dots,
A_{\tau_{m(x)}, \tau_{m(x)}}^{\scriptscriptstyle{--}}$ occur, then for $k=0,1,\dots, m(x)$,
\begin{equation}
\label{sumbound}
\sum_{r=0}^k \min_{\tau_{r-1}\leq i\leq j\leq \tau_r} v_{i,j} < c(\tau_k-\tau_{-1}).
\end{equation}
For suppose (\ref{sumbound}) fails for some value $k\geq 0$ 
(but is true for all smaller values). Then by subtraction, 
\begin{equation}
\label{subtract}
\sum_{r=a}^k \min_{\tau_{r-1}\leq i\leq j\leq \tau_r} v_{i,j} \geq c(\tau_k-\tau_{a-1}) \qquad \forall 0 \leq a \leq k.
\end{equation}
In that case suppose $0\leq x<\tau_k$. For some $a$ with $0\leq a\leq k$ we have
$\tau_{a-1}\leq x<\tau_a$. 

Since the $\tau_r$ are strongly connected points, there exists a path from $x$ to $\tau_k$
which passes through all of $\tau_a,\tau_{a+1}, \dots, \tau_k$, 
and which therefore includes at least one edge within each interval
$[\tau_{a-1}, \tau_a]$, $[\tau_a, \tau_{a+1}],\dots, [\tau_{k-1}, \tau_k]$.

From (\ref{subtract}), this path must have weight at least $c(\tau_k-\tau_{a-1})$,
which is at least $c(\tau_k-x)$.

Since this holds for all $0\leq x\leq \tau_k$, it follows that the event 
$A_{\tau_{k}, \tau_{k}}^{\scriptscriptstyle{--}}$ would have to hold. 

So indeed the event on the RHS of (\ref{firstineq}) implies (\ref{sumbound}),
and so in particular we have
\begin{equation}\label{secondineq}
\PP(\nu>x)\leq \PP\left(
\sum_{r=0}^{m(x)} \min_{\tau_{r-1}\leq i\leq j\leq \tau_r} v_{i,j}
< c(\tau_{m(x)}-\tau_{-1})
\right).
\end{equation}

Since the strongly connected points form a renewal process whose intervals
have exponential moments (see (\ref{strongexp})), we have, for any $\epsilon>0$,
\begin{align}
\label{m-x}
\PP\left(\frac{m(x)}{x}<\gamma-\epsilon\right)&\leq c_1 e^{-c_2x},\\
\label{taum-x}
\PP\left(\frac{\tau_m(x)-\tau_{-1}}{x}>1+\epsilon\right)&\leq c_2 e^{-c_4x},\\
\end{align}
for some constants $c_1$, $c_2$, $c_3$, $c_4$ and all $x\in\ZZ$.

But the quantities $\min_{\tau_{r-1}\leq i<j\leq \tau_r} v_{i,j}$
are non-negative, and i.i.d.\ for $r\geq 1$, and 
we have assumed that $c<\gamma\EE\left[ \min_{\tau_0\leq i<j\leq \tau_1} v_{i,j} \right]$.
Hence for sufficiently small $\epsilon$ and some $c_5$, $c_6$,
\[
\PP\left(
\sum_{r=0}^{\lfloor(\gamma-\epsilon)x\rfloor}
\min_{\tau_{r-1}\leq i<j\leq \tau_r} v_{i,j}
< c(1+\epsilon)x
\right)
\leq c_5 e^{-c_6 x}.
\]
Putting all these together with (\ref{secondineq}), we get that 
$\PP[\nu>x]$ decays exponentially in $x$, as desired.
\end{proof}
Next we want to prove that the expectation of $\mu$, conditioned on $\left\{ \mu < \infty \right\}$ is also finite under suitable moment conditions for $v$.
\begin{proposition} \label{Emu}
If $\mathbb{E} \left[ v^{3} \right] < \infty$ and $c$ satisfies (\ref{ccond2}), then
$$ \mathbb{E} \left[ \left. \mu \right| \mu < \infty \right] < \infty $$
\end{proposition}
\begin{proof}
We have for $d > 0$
\begin{align*}
\mathbb{P} \left[ \mu = d \right]
&= \mathbb{P} \left[ \left( A_{0,d}^{\scriptscriptstyle{-+}} \cap A_{0,d}^{\scriptscriptstyle{++}} \right)^c \cap \left( A_{0,d-1}^{\scriptscriptstyle{-+}} \cap A_{0,d-1}^{\scriptscriptstyle{++}} \right) \right] \\
&\leq \mathbb{P} \left[ \left( A_{0,d}^{\scriptscriptstyle{-+}} \right)^c \cap A_{0,d-1}^{\scriptscriptstyle{-+}} \cap A_{0,d-1}^{\scriptscriptstyle{++}} \right] \\
&\qquad + \mathbb{P} \left[ \left( A_{0,d}^{\scriptscriptstyle{++}} \right)^c \cap A_{\tau_0,d-1}^{\scriptscriptstyle{-+}} \cap A_{\tau_0,d-1}^{\scriptscriptstyle{++}} \right] \\
&\leq \mathbb{P} \left[ \left( A_{0,d}^{\scriptscriptstyle{-+}} \right)^c \cap A_{0,d-1}^{\scriptscriptstyle{-+}} \right] + \mathbb{P} \left[ \left( A_{0,d}^{\scriptscriptstyle{++}} \right)^c \cap A_{0,d-1}^{\scriptscriptstyle{++}} \right] \\
&\leq \mathbb{P} \left[ \sup_{i \geq 1} \left( v_{-i,d} - ci \right) > c d \right] + \mathbb{P} \left[ w_{0,d} < c d \right] \\
&\leq \sum_{i=1}^{\infty} \mathbb{P} \left[ v_{-i,d} > c \left( d + i \right) \right] + \mathbb{P} \left[ w_{0,d} < c d \right]
\end{align*}
By the same arguments as in the proof of Proposition \ref{Enu} we have that the second probability decays exponentially in $d$. Looking at the first probability we get
\begin{align*}
\sum_{d=1}^{\infty} d \sum_{i=1}^{\infty} \mathbb{P} \left[ v_{-i,d} > c \left( d + i \right) \right] 
&= \sum_{l=2}^{\infty} \sum_{j=1}^{l-1} j \mathbb{P} \left[ v > cl \right]\\
&\leq \sum_{l=2}^{\infty} \frac{l^2}{2}  \mathbb{P}[v>cl],
\end{align*}
which is finite if $\mathbb{E} \left[ v^{3} \right]$ is
finite. Therefore, we get that $\mathbb{E} \left[ v^{3} \right] <
\infty$ implies that $\mathbb{E} \left[ \left. \mu \right| \mu <
  \infty \right] < \infty$.
\end{proof}
This completes the proof that $\mathbb{E} \left[ \Gamma_0 \right] < \infty$ 
whenever $c$ satisfies (\ref{ccond2}).\end{proof}

Now we are ready to prove the central limit theorem for $w_{0,n}$ (Theorem \ref{CLT}).
\begin{proof}[Proof of Theorem \ref{CLT}:]
Take any $c$ satisfying (\ref{ccond2}).
  Since under the condition $\mathbb{E} \left[ v^3 \right] <
  \infty$ we have $\mathbb{E} \left[ \Gamma_0 \right] < \infty$ we
  also get $\mathbb{E} \left[ | \Gamma_{-1} | \right] < \infty$. This
  implies that the variance of $\Gamma_1 - \Gamma_0$ is finite (since
  the $\Gamma_n$ form a stationary renewal process, see Remark 4.2.1 in
  \cite{baccellibremaud}). Now we want to show that $\sigma^2 =
  \operatorname{Var} \left( w_{\Gamma_0,\Gamma_1} - C \left( \Gamma_1
      - \Gamma_0 \right) \right)$ is finite. We will prove this in a
  separate Proposition.
\begin{proposition} \label{var}
If $\mathbb{E} \left[ v^3 \right] < \infty$ then 
$\operatorname{Var} \left( w_{\Gamma_0,\Gamma_1} - C \left( \Gamma_1 - \Gamma_0 \right) \right) < \infty$.
\end{proposition}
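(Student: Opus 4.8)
The plan is to reduce the statement to the bound $\EE[w_{\Gamma_0,\Gamma_1}^2]<\infty$, and then to estimate $w_{\Gamma_0,\Gamma_1}$ inside a single renewal cycle, using the machinery already developed for Lemma~\ref{exp} together with the hypothesis $\EE[v^3]<\infty$. For the reduction, recall that Lemma~\ref{exp}, combined with the renewal-theory fact quoted just before it, gives $\EE[\Gamma_0]<\infty$ and hence $\EE[(\Gamma_1-\Gamma_0)^2]<\infty$. Since
\[
\operatorname{Var}\bigl(w_{\Gamma_0,\Gamma_1}-C(\Gamma_1-\Gamma_0)\bigr)\le 2\EE[w_{\Gamma_0,\Gamma_1}^2]+2C^2\EE[(\Gamma_1-\Gamma_0)^2],
\]
it suffices to prove that $\EE[w_{\Gamma_0,\Gamma_1}^2]<\infty$.

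Next I would set up a within-cycle bound. Write $T=\Gamma_1-\Gamma_0$. The edges of the geodesic from $\Gamma_0$ to $\Gamma_1$ have lengths summing to $T$, and each such edge $e$ satisfies $v_e\le|e|+(v_e-|e|)_+$, so
\[
w_{\Gamma_0,\Gamma_1}\le T+\sum_{\Gamma_0\le x<y\le\Gamma_1}(v_{x,y}-(y-x))_+\le T+\sum_{x=\Gamma_0}^{\Gamma_1-1}Z_x,\qquad Z_x:=\sum_{y>x}(v_{x,y}-(y-x))_+.
\]
The $Z_x$ are i.i.d.\ and non-negative, and a routine computation gives $\operatorname{Var}(Z_x)=\sum_{\ell\ge1}\operatorname{Var}((v-\ell)_+)\le\sum_{\ell\ge1}\EE[(v-\ell)_+^2]$; the last sum is comparable to $\EE[v^3]$ and is therefore finite under our hypothesis, so $\EE[Z_x^2]<\infty$. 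Hence $\EE[w_{\Gamma_0,\Gamma_1}^2]\le 2\EE[T^2]+2\EE[(\sum_{x=\Gamma_0}^{\Gamma_1-1}Z_x)^2]$, and it remains to bound the last term.

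The main step is controlling that random sum. The difficulty is that the index range $\{\Gamma_0,\dots,\Gamma_1-1\}$ is not independent of the summands $Z_x$, so no Wald-type identity applies directly, and it is essential not to enlarge the range to $\{0,\dots,\Gamma_1-1\}$, since the residual $\Gamma_0$ need not be square-integrable. My approach would be to pass to a ``typical cycle'' picture: by the cycle structure (Lemma~\ref{cycles}) and the Palm identity for the stationary renewal process $(\Gamma_n)$, the expectation in question is a bounded multiple of the corresponding quantity for the cycle $(0,\Gamma_1)$ on the event $\{0\in\cR\}$; as in the proof of Lemma~\ref{cycles}, on that event the position of $\Gamma_1$ and the weights in $[0,\Gamma_1]$ are measurable with respect to the edges with left endpoint $\ge0$, and since $A_0^{--}$ and $A_0^{-+}$ are independent of that $\sigma$-algebra (Proposition~\ref{independence}) the conditioning on $A_0$ reduces to conditioning on $A_0^{++}$ only, under which the first renewal point to the right of $0$ has the distribution of a typical cycle length and so is square-integrable. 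I would then dominate this cycle exactly as in the proof of Lemma~\ref{exp}, by a construction of the same type built from the silver points $\rho_k$ and the quantities $\mu_k$, where $K$ is geometric, the $\mu_k$ (conditioned to be finite) are i.i.d.\ with finite mean (Proposition~\ref{Emu}), and the $\cU$-intervals have exponential moments (Proposition~\ref{Enu}, Lemma~\ref{cycles2}); combining this with $\EE[Z_x^2]<\infty$ and a second-moment version of Proposition~\ref{ransum} --- namely, a sum of i.i.d.\ non-negative square-integrable terms stopped at a square-integrable stopping time is again square-integrable (the second Wald identity) --- yields the desired finiteness.

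I expect this last step to be the main obstacle. The two delicate points are: (i) arranging the domination of the cycle by the $\cU$/$\mu_k$-construction so that it is adapted to a filtration with respect to which the $Z_x$ are revealed progressively, so that the second Wald identity may legitimately be invoked (the $\cU$-points and the $\mu_k$ are defined through essentially one-sided events, which makes this possible but requires care); and (ii) tracking the bounded cost incurred by the various conditionings on $A_0^{++}$, $A_0^{-+}$ and $A_0^{--}$. No new probabilistic idea beyond those already used for Lemma~\ref{exp} should be needed, but the bookkeeping must now be carried out at the level of second rather than first moments.
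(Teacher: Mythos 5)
Your approach diverges substantially from the paper's, and I believe it has a genuine gap that cannot be patched without strengthening the moment hypothesis.

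The paper's proof expands the square of $\sum_{\Gamma_0\le x<y\le\Gamma_1}[v_{x,y}-C(y-x)]_+$ directly into a diagonal term and a cross term. The diagonal term is handled by the same $\lambda^{-1}$-averaging you mention. The cross term is the heart of the matter, and the paper handles it with two ideas you don't have: first, a correlation inequality~(\ref{renewalclaim}) showing that conditioning on \emph{no} renewal point between two edges stochastically \emph{decreases} their weights (an FKG-type monotonicity argument, since presence of a renewal point is increasing in the weights); and second, a law-of-large-numbers argument: one shows $\EE s_{0,n}=O(n)$, and since $s_{0,n}\ge \sum_{i\le r(n)}s_{\Gamma_{i-1},\Gamma_i}$ with i.i.d.\ nonnegative summands and $r(n)/n\to\lambda$ a.s., finiteness of $\EE s_{\Gamma_0,\Gamma_1}$ follows by contradiction. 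Crucially, neither step ever requires the cycle endpoint to be a stopping time, and neither step needs more than $\EE[v^3]<\infty$.

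Your route, by contrast, tries to control $\EE\bigl[\bigl(\sum_{x=\Gamma_0}^{\Gamma_1-1}Z_x\bigr)^2\bigr]$ via a second Wald identity. The difficulty you flag in point~(i) is, I think, fatal rather than merely delicate. The event $\{\Gamma_1=\Gamma_0+k\}$ is determined by $A_{\Gamma_0+k}^{\scriptscriptstyle++}$, which depends on $w_{\Gamma_0+k,\Gamma_0+k+l}$ for \emph{all} $l\ge1$; likewise in the $\cU/\mu$-construction, the termination condition $\mu_k=\infty$ is precisely $A_{\sigma_k}^{\scriptscriptstyle-+}\cap A_{\sigma_k}^{\scriptscriptstyle++}$, an event about all edges to the right of $\sigma_k$. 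So the cycle end, and any dominating construction built on the $\mu_k$, is not a stopping time for any filtration in which the $Z_x$ (which read off edges with left endpoint $x$) are adapted — one cannot decide to stop without having already looked infinitely far ahead. The second Wald identity does not apply, and the Palm/conditioning manoeuvre you describe does not remove this; the $Z_x$ for $x\in[\Gamma_0,\Gamma_1)$ remain correlated through the conditioning on the absence of intermediate renewal points, which is exactly the phenomenon the paper's correlation inequality is designed to control.

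There is a second, independent problem: even if one could legitimately dominate the cycle length by $\rho_L$ with $L=\sum_{j<K}\mu_j$ and then invoke a second-moment version of Proposition~\ref{ransum}, that would require $\EE[L^2]<\infty$, hence $\EE[\mu^2\,|\,\mu<\infty]<\infty$. Tracing the estimate in Proposition~\ref{Emu}, this leads to $\sum_d d^2\sum_i\PP[v>c(d+i)]\asymp\sum_l l^3\PP[v>cl]$, which is finite only under $\EE[v^4]<\infty$ — a strictly stronger hypothesis than the $\EE[v^3]<\infty$ that the proposition asserts is sufficient. (Relatedly, the analogue of $\EE[R_a^2]<\infty$ needs a third moment of $\rho_1-\rho_0$, which is fine by Proposition~\ref{Enu}, but the $\mu$-moment is the bottleneck.) So even a repaired version of your plan would prove a weaker statement.

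A small further remark: your bound $\sum_{\Gamma_0\le x<y\le\Gamma_1}(v_{x,y}-(y-x))_+\le\sum_{x=\Gamma_0}^{\Gamma_1-1}Z_x$ enlarges the sum to include edges $(x,y)$ with $y>\Gamma_1$, which is unnecessary (the geodesic from $\Gamma_0$ to $\Gamma_1$ uses only edges inside the cycle) and re-introduces dependence on data outside the cycle that the regenerative structure was supposed to remove. This does not by itself break the argument, but it compounds the difficulty of making the stopping-time reasoning go through.

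The key missing idea, in short, is the FKG-type correlation inequality together with the subsequent LLN argument that converts a linear-in-$n$ bound on $\EE s_{0,n}$ into finiteness of the per-cycle expectation; that combination is what lets the paper avoid any stopping-time considerations and any moment assumption beyond $\EE[v^3]<\infty$.
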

\begin{proof}
  In order to show that the variance of $w_{\Gamma_0,\Gamma_1} - C
  \left( \Gamma_1 - \Gamma_0 \right)$ is finite, it is enough to show
  that the second moment of this random variable is finite.
\begin{align*}
\mathbb{E}
&\left[ \left( w_{\Gamma_0,\Gamma_1} - C(\Gamma_1 - \Gamma_0) \right)^2 \right] \\
&\qquad = \mathbb{E} \left[ \left( w_{\Gamma_0,\Gamma_1} - C(\Gamma_1 - \Gamma_0) \right)^2 \mathbbm{1}_{\left\{ w_{\Gamma_0,\Gamma_1} \geq C(\Gamma_1 - \Gamma_0) \right\}} \right] \\
&\qquad \qquad \qquad + \mathbb{E} \left[ \left( w_{\Gamma_0,\Gamma_1} - C(\Gamma_1 - \Gamma_0) \right)^2 \mathbbm{1}_{\left\{ w_{\Gamma_0,\Gamma_1} < C(\Gamma_1 - \Gamma_0) \right\}} \right] \\
&\qquad \leq \mathbb{E} \left[ \left( \max_{\Gamma_0 = i_0 < j_0 = i_1 < j_1 = \ldots < j_m = \Gamma_1} \sum_{l=0}^{m} \left[ v_{i_l,j_l} - C(j_l - i_l) \right]_+ \right)^2 \right] \\
&\qquad \qquad \qquad + \mathbb{E} \left[ C^2(\Gamma_1 - \Gamma_0)^2 \right]
\end{align*}
Under the assumption $\mathbb{E} \left[ v^3 \right] < \infty$ we
know that the second expectation is finite. Therefore we will only
consider the first expectation in the following. For the first
expectation we get
\begin{align}
\mathbb{E}
&\left[ \left( \max_{\Gamma_0 = i_0 < j_0 = i_1 < j_1 = \ldots < j_m = \Gamma_1} \sum_{l=0}^{m} \left[ v_{i_l,j_l} - C(j_l - i_l) \right]_+ \right)^2 \right] \\
&\qquad \leq \mathbb{E} \left[ \sum_{\Gamma_0 \leq x < y \leq \Gamma_1} \left[ v_{x,y} - C(y-x) \right]_+^2 \right] \label{1} \displaybreak[0] \\
&+2\EE\sum_{\Gamma_0\leq x<y\leq u<z\leq\Gamma_1}
\left[ v_{x,y} - C(y-x) \right]_+ 
\left[v_{u,z} - C(z-u) \right]_+ 
\label{2}
\end{align}

We will look at the expectations in (\ref{1}) and (\ref{2}) separately. 
For the first one, we can use that the expected length of a typical renewal interval is $\lambda^{-1}$ to give
\begin{align*}
\EE \left[ \sum_{\Gamma_0 \leq x < y \leq \Gamma_1} \left[ v_{x,y} - C(y-x) \right]_+^2 \right]
&\leq \EE \left[ \sum_{\Gamma_0\leq x<\Gamma_1}\sum_{y>x} \left[v_{x,y}-C(y-x)\right]^2_+ \right] \\
&=\lambda^{-1} \sum_{y > 0} \EE \left[ \left[ v_{0,y} - Cy \right]_+^2 \right]\\
&\leq const \cdot \EE\left[ v^3 \right],
\end{align*}
and so the expectation in (\ref{1}) is finite. 

Let us now look at the second expectation, for which we have to sum over pairs of
edges that are in the same renewal interval. For $i\leq j$ 
let $R_{i,j}$ be the event that the set $\{i,i+1,\dots,j\}$ contains
at least one renewal point. Note that $\PP(R_{i,j}^c)=\PP(R_{0,j-i}^c)=\PP(\Gamma_0>j-i)$.

Then define 
\[
s_{r,n}=
\sum_{r\leq x<y\leq u<z\leq n}
\left[v_{x,y}-C(y-x)\right]_+
\left[v_{u,z}-C(z-u)\right]_+
I(R_{x+1,z-1}^c).
\]
Notice that the expression in (\ref{2}) is precisely $\EE s_{\Gamma_0, \Gamma_1}$;
we need to show that this is finite. 
We first aim to show that the expectation of $s_{0,n}$ grows only linearly with $n$. 
To do so we make the following claim, to be proved below: 
for any $x<y\leq u<z$, and any $s,t\geq 0$,
\begin{equation}
\label{renewalclaim}
\PP\left(v_{x,y}\geq t, v_{u,z}\geq s\right)\geq 
\PP\left(v_{x,y}\geq t, v_{u,z}\geq s \big| R_{y,u}^c\right).
\end{equation}
In that case
\begin{align*}
\EE s_{0,n}&=
\EE\sum_{0\leq x<y\leq u<z\leq n}
\left[v_{x,y}-C(y-x)\right]_+\left[v_{u,z}-C(z-u)\right]_+I(R_{x+1,z-1}^c)\\
&\leq
\EE\sum_{0\leq x<y\leq u<z\leq n}
\left[v_{x,y}-C(y-x)\right]_+\left[v_{u,z}-C(z-u)\right]_+I(R_{y,u}^c)\\
&=
\sum_{0\leq x<y\leq u<z\leq n}
\EE\left(\left[v_{x,y}-C(y-x)\right]_+\left[v_{u,z}-C(z-u)\right]_+\big|R_{y,u}^c\right)
\PP(R_{y,u}^c)\\
&\leq
\sum_{0\leq x<y\leq u<z\leq n}
\EE\left(\left[v_{x,y}-C(y-x)\right]_+\left[v_{u,z}-C(z-u)\right]_+\right)
\PP(R_{y,u}^c)\\
&\leq
n\sum_{0<y\leq u<z} 
\EE\left[v_{0,y}-Cy\right]_+\EE\left[v_{u,z}-C(z-u)\right]_+\PP(\Gamma_0>u-y)
\\
&=n\EE \Gamma_0\left(\EE\sum_{y>0}\left[v_{0,y}-Cy\right]_+\right)^2.
\end{align*}
By Lemma \ref{exp}, this gives $\EE s_{0,n}=O(n)$ whenever $\EE(v^3)<\infty$.

Now note that for $n\geq\Gamma_0$ we have
\begin{equation}\label{lower2}
\frac1n \left(s_{0,\Gamma_0}+\sum_{i=1}^{r(n)}s_{\Gamma_{i-1}, \Gamma_i}\right)
\leq \frac1n s_{0,n}
\end{equation}
where as before, we write 
$r(n)$ for the label of the last renewal point to the left of $n$, 
so that $\Gamma_{r(n)} < n \leq \Gamma_{r(n)+1}$. 

By Lemma \ref{cycles}, the quantities $s_{\Gamma_{i-1}, \Gamma_i}$ are i.i.d.\ 
for $i\geq 1$. Hence, if $s_{\Gamma_0, \Gamma_1}$ had infinite mean,
then the left-hand side of (\ref{lower2}) would converge to infinity 
almost surely, since $r(n)/n\to\lambda$ a.s. But then also $s_{0,n}/n$ would converge 
to infinity almost surely, which contradicts the fact that $\EE s_{0,n}/n$ 
is bounded. Hence $s_{\Gamma_0, \Gamma_1}$ has finite mean, which is 
to say that the expectation in (\ref{2}) is finite. 

This completes the proof, subject to the claim (\ref{renewalclaim})
which we now justify.
We consider the dependence of the event 
$R_{y,u}$ on the weights $v_{x,y}$ and $v_{u,z}$. 

Take $r\in\{y,y+1,\dots,u\}$.
From the definition of the set of renewal points $\cR$,
it's easy to see that the event $\{r\in\cR\}$ 
is an increasing event as a function of $v_{x,y}$ and $v_{u,z}$;
that is, if $r\in\cR$ and we increase the values of $v_{x,y}$ 
or $v_{u,z}$ while leaving all other weights the same,
then it remains the case that $r\in\cR$.
But $R_{y,u}=\bigcup_{y\leq r\leq u} \{r\in\cR\}$,
so $R_{y,u}$ is also an increasing event as a function of
$v_{x,y}$ and $v_{u,z}$.

Since these events depend only on the weights $v_{i,j}$ 
and the indicator variables $\alpha_{i,j}$ determining which
edges are present,
and since these quantities are all independent,
it follows that
the distribution of $(v_{x,y}, v_{u,z})$ 
conditioned on $R_{y,u}$ dominates the
unconditioned distribution, which is equivalent to (\ref{renewalclaim}).
\end{proof}
With Proposition \ref{var} established, 
the rest of the argument to prove the central limit theorem in Theorem \ref{CLT}
is analogous to that in \cite{denisovfosskonstantopoulos}, see proof of Theorem 2 (pp. 20-22),
using Donsker's theorem and the continuous mapping theorem 
(and the fact that the fraction of renewal points between $0$ and $[nt]$ 
converges to the deterministic function $\lambda t$).
\end{proof}

\subsection{Length of the longest edge}
In this section we analyze the asymptotic behaviour of
$\ell_n$ and $h_n$, the length of the longest edge 
and the weight of the heaviest edge used 
on the 
geodesic between $0$ and $n$, and prove Theorem \ref{longestedge}. 

We are working under the assumption (\ref{F}) that $F$ is regularly varying with index $s$. 
Define $f(x)$ by $1-F(x)=x^{-s}f(x)$, so that $f$ is a slowly varying function,
i.e.\ $f(tx)/f(x)\to 1$ as $x\to\infty$, for any $t>0$. 

We start with some general results about regularly varying functions that
will be useful throughout this section. Let $g(z) = z^{-s} f(z)$ be a
regularly varying function with index $s>1$ (i.e.\ $f$ is slowly
varying). Then we have
\begin{equation} \label{asymptotic1}
\int_x^{\infty} g(z) dz \sim \frac{x^{-s+1}}{s-1} f(x).
\end{equation}
See for example Proposition 1.5.10 in \cite{binghamgoldieteugels}. From the Representation Theorem (Theorem 1.3.1 in \cite{binghamgoldieteugels}) it follows that we can choose a function $r_0(x)$, depending on $f$, that increases 
to infinity but does so 
slowly enough that
$$ \sup_{x \leq y \leq xr_0(x)} \frac{f(y)}{f(x)} \xrightarrow[]{x \rightarrow \infty} 1; $$
then for this $r_0$
\begin{equation} \label{asymptotic2}
\int_x^{xr_0(x)} g(z) dz \sim \frac{x^{-s+1}}{s-1} f(x)
\end{equation}
From (\ref{asymptotic1}) and (\ref{asymptotic2}) we get that for any function $r(x) \leq \infty$ such that $r(x) \rightarrow \infty$ as $x \rightarrow \infty$ the following holds
\begin{equation} \label{asymptotic3}
\int_x^{xr(x)} g(z) dz \sim \frac{x^{-s+1}}{s-1} f(x)
\end{equation}

\begin{proof}[Proof of Theorem \ref{longestedge}:]
We start by proving the limit in (\ref{upperlower}). We start with an upper bound, 
and aim to show that if $\beta>1/(s-1)$, the optimal path is unlikely to use an edge as long as $n^\beta$.

\noindent \textbf{Upper bound.} 
For any edge $e=(x,y)$, write $|e|=y-x$ for the length of the edge. 
Write $w^-_{i,j}$ for the maximal weight of a path from $i$ to $j$ not using the edge $(i,j)$ itself. 

\begin{lemma}\label{expdecaylemma}
Fix $\beta\in(0,1)$. 
\begin{itemize}
\item[(i)]For some $c_1$ and $M>0$,
\[
\PP(w^-_{0,m}\leq mM)\leq e^{-c_1 m} \text{ for all } m.
\]
\item[(ii)]For some $c_2$ and $M>0$, for all $n$,
\[
\PP\left(w^-_{x,y}\leq M(y-x)\text{ for some }0\leq x<y\leq n \text{ with } y-x\geq n^\beta\right)
\leq e^{-c_2 n^\beta}, 
\]
and
\begin{multline}
\PP\big(\text{The geodesic from 0 to $n$ uses an edge $e$ with }
|e|\geq n^\beta \text{ and } v_e\leq M|e|\big)
\\
\label{longedgebound}
\leq e^{-c_2 n^\beta}.
\end{multline}
\end{itemize}
\end{lemma}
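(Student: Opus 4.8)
The plan is to obtain both parts of the lemma from part (i), which asserts a linear lower bound on $w^-_{0,m}$ with exponentially small failure probability. The idea behind (i) is to exhibit an explicit path from $0$ to $m$ using at least two edges (hence different from the single edge $(0,m)$), whose weight is easy to control. When $p=1$ I would take the concatenation of the nearest-neighbour edges $(0,1),(1,2),\dots,(m-1,m)$; its weight is the i.i.d.\ sum $\sum_{j=0}^{m-1}v_{j,j+1}$, with positive finite mean $\EE[v]$, and a Chernoff bound then gives $\PP\bigl(\sum_{j=0}^{m-1}v_{j,j+1}\le Mm\bigr)\le e^{-c_1 m}$ for any $M<\EE[v]$ and a suitable $c_1=c_1(M)>0$ (the exponential moment $\EE[e^{-\theta v}]$ being automatically finite since $v>0$).

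For general $p\in(0,1]$ I would argue in the same spirit, but route the path through the strongly connected points $\dots<\tau_{-1}<0\le\tau_0<\tau_1<\dots$ of Section \ref{generalpsection}. If $[0,m]$ contains at least three strongly connected points, with $\tau_a$ the first and $\tau_b$ the last, then concatenating a geodesic from $0$ to $\tau_a$, geodesics between consecutive $\tau_j$, and a geodesic from $\tau_b$ to $m$ yields a valid path from $0$ to $m$ with at least two edges (these geodesics exist because a strongly connected point is connected to every vertex), so
\[
w^-_{0,m}\ \ge\ \sum_{j\,:\,0\le\tau_{j-1}<\tau_j\le m} w_{\tau_{j-1},\tau_j}.
\]
The right-hand side is a random sum of independent non-negative terms, identically distributed for $j\ge 1$ with common mean $\delta=\EE[w_{\tau_0,\tau_1}]\in(0,\infty)$ (finiteness of $\delta$ using $\EE[v^2]<\infty$). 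The plan is then to bound from below, with exponentially small error, the number of such intervals inside $[0,m]$ by $(\gamma-\varepsilon)m$ using the exponential tails \eqref{strongexp} of the renewal intervals of $\{\tau_j\}$, and to bound the lower tail of a sum of $\lceil(\gamma-\varepsilon)m\rceil$ i.i.d.\ copies of $w_{\tau_0,\tau_1}$ by another Chernoff estimate; this gives $\PP(w^-_{0,m}\le Mm)\le e^{-c_1 m}$ for any fixed $M<\gamma\delta$, a suitable $c_1>0$, and all large $m$ (and recovers the case $p=1$, where $\gamma=1$ and $\delta=\EE[v]$).

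Part (ii) would then follow from (i) with the same $M$. For the first assertion, a union bound over the at most $n^2$ pairs $(x,y)$ with $0\le x<y\le n$ and $y-x\ge n^\beta$, together with translation invariance and (i) (so each term is at most $e^{-c_1(y-x)}\le e^{-c_1 n^\beta}$), gives a total bound $n^2 e^{-c_1 n^\beta}=e^{2\log n-c_1 n^\beta}$, which is at most $e^{-c_2 n^\beta}$ for any $c_2<c_1$ once $n$ is large. For \eqref{longedgebound}, I would note that if the geodesic from $0$ to $n$ uses an edge $e=(x,y)$, then splicing into it any path from $x$ to $y$ that avoids $e$ cannot increase the total weight, so $v_e\ge w^-_{x,y}$ (trivially so if no such path exists). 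Hence the event that the geodesic uses some $e$ with $|e|\ge n^\beta$ and $v_e\le M|e|$ is contained in the event that $w^-_{x,y}\le M(y-x)$ for some admissible pair with $y-x\ge n^\beta$, and \eqref{longedgebound} follows from the first assertion.

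The main obstacle will be the case $p<1$ in part (i): there a path between two given vertices need not exist, so no fixed deterministic path works and the lower bound has to be built through the random strongly connected points; combining the exponential concentration of their count in $[0,m]$ with the Chernoff bound for the resulting random sum is routine but is the step requiring the most care. A minor caveat is that (i) cannot literally hold for very small $m$ (for instance $w^-_{0,1}=-\infty$ deterministically), but it holds for all $m$ beyond a fixed constant, which is all that is used in (ii) and in the rest of the proof of Theorem \ref{longestedge}.
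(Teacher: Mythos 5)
Your proof is correct and follows essentially the same strategy as the paper: for $p=1$ bound $w^-_{0,m}$ below by the i.i.d.\ sum of nearest-neighbour weights; for $p<1$ route a path through the strongly connected points, combine the exponential concentration of their count in $[0,m]$ (from (\ref{strongexp})) with a Chernoff-type lower-tail bound; and deduce (ii) by a union bound over pairs $(x,y)$ together with the observation that an edge $e=(x,y)$ with $v_e < w^-_{x,y}$ cannot appear on a geodesic. The only cosmetic difference is that the paper bounds $w^-_{0,m}$ below by the sum of the \emph{individual} edge weights $v_e$ along a path through the strongly connected points (at least $m\gamma/2$ such edges with high probability), whereas you aggregate into the i.i.d.\ geodesic weights $w_{\tau_{j-1},\tau_j}$; both variants are fine (and $\EE[v]<\infty$ alone suffices for the finiteness of $\delta$ you invoke).
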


\begin{proof}
Property (i) is immediate for $p=1$, since the quantity $w^-_{0,m}$ is bounded from above 
by the sum of $m$ i.i.d.\ non-negative random variables. For $p<1$ we can do something
analogous using the strongly connected points. From (\ref{strongexp}), 
the distance between successive strongly connected points has an exponentially decaying tail,
and so the probability that there exist fewer than $m\gamma/2$ strongly connected points between
$0$ and $m$ decays exponentially in $m$, where $\gamma$ is the density of strongly connected points.
If there exist at least $m\gamma/2$ such points, then there is a path from 0 to $m$ containing 
at least $m\gamma/2$ edges. 

But the weights are i.i.d., bounded below and with positive mean, so for appropriately chosen $M$ 
the probability that their sum is less than $Mm$ decays exponentially, as required for (i).

For the first part of (ii), simply sum (i) over all appropriate values of $x$ and $y$. 
This introduces an extra factor of $n^2$, but this can be removed by replacing
$c_1$ with sufficiently small $c_2<c_1$ (using the fact that for all $n$ the
probability concerned is strictly less than 1). 
The second part of (ii)
also follows, since an edge $(x,y)$ with $v_{x,y} < w^-_{x,y}$ will never be used in an optimal path.
\end{proof}

Choose $M$ according to Lemma \ref{expdecaylemma}. Now let $N_\beta$ be the number of edges $e$ in the 
interval $[0,n]$ such that $|e|\geq n^\beta$ and $v_e>M|e|$. From the last part of the Lemma, 
\begin{equation}\label{nbeta}
\PP(N_\beta=0 \text{ and } \ell_n\geq n^{\beta}) \leq e^{-c_2n^\beta}.
\end{equation}
But also 
\[
\EE(N_\beta)\leq \sum_{k=n^\beta}^n n\PP\left(v>Mk\right),
\]
since there are at most $n$ edges of any given length $k$ in the interval $[0,n]$. 
Since $\PP(v>Mk)\sim k^{-s}f(k)$ by assumption, we can use
(\ref{asymptotic3}) with $x=n^\beta$ and $r(x)=x^{1/\beta}$ to get
\[
\EE(N_\beta)\leq const\cdot n^{1-\beta(s-1)}f(n^\beta).
\]
Since $f$ is slowly varying, this tends to 0 as $n\to\infty$ whenever $\beta>1/(s-1)$. 
Hence for all such $\beta$, $\PP(N_{\beta}>0)\to0$ as $n\to\infty$; 
combining with (\ref{nbeta}) we have 
\[
\PP\left(\frac{\log \ell_n}{\log n}\geq \beta\right)\to 0 \text{ as } n\to\infty
\text{ for all } \beta>\frac{1}{s-1},
\]
as required for the upper bound.

\noindent \textbf{Lower bound.}
Fix $K>0$ (to be chosen later) and  
let $R_\beta$ be the number of edges within the interval 
$\left(\left\lceil\frac{2n}5\right\rceil, \left\lfloor\frac{3n}5\right\rfloor\right)$
which satisfy $v_e\geq K|e|$ and $|e|\geq n^\beta$. Then
\begin{align}
\nonumber
\EE(R_\beta)
&\geq \sum_{k=n^\beta}^{n/12} \frac{n}{12} \PP\left(\alpha_{0,1}v\geq Kk\right)
\\
\label{rbeta}
&\geq const\cdot n^{1-\beta(s-1)}f(n^\beta).
\end{align}
The first inequality holds since for any $k$ with $n^\beta\leq k\leq n/12$, there are
at least $n/12$ edges of length $k$ within 
$\left(\left\lceil\frac{2n}5\right\rceil, \left\lfloor\frac{3n}5\right\rfloor\right)$,
and the second follows again from (\ref{asymptotic3}).

The RHS of (\ref{rbeta}) tends to infinity as $n\to\infty$ if $\beta<1/(s-1)$. 
Since the corresponding events for different edges $e$ are independent,
we obtain that $\PP(R_\beta\geq 1)\to 1$, i.e.\ with high probability,
at least one such edge exists.

If so, let $e^*=(x^*, y^*)$ be the longest such edge. 
Then define the interval $I^*=(x^*-2|e^*|, y^*+2|e^*|)$,
which is centred on $e$ but is five times as long. 
Note that $I^*$ is still contained in $[0,n]$. 
Finally, let $w^*$ be the maximal weight of a path
contained in the interval $I^*$ which uses only
edges shorter than $e^*$.

We claim that, if $R_\beta\geq 1$, then at least one of the following events must 
hold:
\begin{itemize}
\item[(a)]Some edge at least as long as $e^*$ (maybe $e^*$ itself) is used in the optimal path from 0 to $n$.
\item[(b)]$w^*\geq K|e^*|$.
\item[(c)]There is either no strongly connected point in $(x^*-|e^*|, x^*)$ or there is no strongly connected point in $(y^*, y^*+|e^*|)$. 
\end{itemize}
For if (b) does not hold, then using the edge $e^*$ 
is preferable to any combination of edges 
in $I^*$ which are shorter than $e^*$.
If in addition $(c)$ fails, then using 
appropriate strongly connected points one can 
include the edge $e^*$ simultaneously 
with any 
edge set of compatible edges 
which are wholly to the left of $x^*-|e^*|$ 
or wholly to the right of $y^*+|e^*|$. 
Then the only reason not to use $e^*$ 
is if the optimal path contains an edge $(r,s)$
where either $r<x^*-2|e^*|$, $s>x^*-|e^*|$ 
or $r<y^* + |e^*|$, $s>y^*+2|e^*|$. 
But such an edge has length at least $e^*$. 
So indeed (a) then holds. 

We already have $\PP(R_\beta\geq 1)\to 1$ as $n\to\infty$.
Since the strongly connected points form a renewal process 
with positive density and the renewal intervals have 
exponential tails (see (\ref{strongexp})), 
and $|e^*|\geq n^\beta$, event (c) has probability 
tending to 0 as $n\to\infty$.

If we can show that the probability of event (b) 
also goes to 0, then with probability tending to 1, 
event (a) occurs. Then indeed $\ell_n \geq n^{\beta}$,
and we will have shown that for any $\beta<1/(s-1)$,
$\PP(\ell_n\geq n^\beta)\to 1$ as $n\to \infty$ as required.

So, we need to prove the following:
\[
\textit{Claim: for appropriate $K$, }
\PP(R_\beta\geq 1, w^*\geq K|e^*|)\to 0 \textit{ as } n\to\infty.
\]
Suppose $R_\beta\geq 1$, and condition on the identity of the edge $e^*$.
Let $m=|e^*|$. From the definition of $e^*$, knowing the identity of 
$e^*$ has given us no information about the weights of edges shorter than $m$. 
Then since $I^*$ has length $5m$, the distribution of $w^*$ is
dominated by the distribution of $w_{0,5m}$ in the case $p=1$. 
But the SLLN in that case gives $\frac{1}{5m}w_{0,5m}\to C^{(p=1)}$ 
in probability, for some constant $C^{(p=1)}$. Hence the claim
holds for any $K>C^{(p=1)}$. 

This completes the argument for the longest edge $\ell_n$,
and we can use those results to give the corresponding statements
for the heaviest weight $h_n$. 

The lower bound follows immediately from the bound for $\ell_n$ and property
(\ref{longedgebound}). 
For the upper bound, suppose $\beta>1/(s-1)$. Take $\beta'\in(1/(s-1), \beta)$. 
We know that as $n\to\infty$, the probability that the optimal 
path uses an edge as long as $n^{\beta'}$ tends to 0. 
But also the probability that there exists an edge of length 
less than $n^{\beta'}$ with weight as high as $n^\beta$
is bounded above by
\[
\sum_{k=1}^{n^{\beta'}}n\PP\left(v\geq n^\beta\right)
\leq const\cdot n^{1+\beta'}n^{-\beta s}f(n^\beta)
\]
which converges to 0 as $n\to\infty$. So indeed the probability 
that an edge as heavy as $n^{\beta}$ is used goes to 0, as required.




Now we turn to the fluctuations of $w_{0,n}$ when $2<s<3$. Suppose $\beta<1/(s-1)$,
and choose $\beta'\in(\beta, 1/(s-1))$.
Let $\bare$ be the heaviest edge used in the optimal path from $0$ to $n$. 
We know from above that with high probability $v_\bare> n^{\beta'}$.

Condition on the identity of $\bare$ and the weight of all the other edges in
$[0,n]$, but not the weight of $\bare$ itself. Write $\cA$ for the collection of all this information.

Given $\cA$, we have a lower bound, $V_{\min}$ say, for the weight of $\bare$
(since given the weights of all other edges, $\bare$ will be the heaviest weight in 
the optimal path if and only if its weight exceeds some threshold). 
Now the conditional distribution of $v_{\bare}$ given $\cA$ is the
distribution of a typical weight $v$ conditioned on $v>V_{\min}$. 
Given $\cA$, the value of $w_{0.n}-v_{\bare}$ is constant. 

Certainly we will have $V_{\min}>n^{\beta'}/2$ with high probability. 
For if $V_{\min}\leq n^{\beta'}/2$, then 
$\PP(v_\bare\leq n^{\beta'}|\cA)\geq \PP(v\leq n^{\beta'}|v\geq n^{\beta'}/2)$,
which does not go to 0 as $n\to\infty$ (since the tail of the distribution of $v$ is regularly varying,
so that $P(v\leq 2x|v\geq x)$ converges to a non-zero limit as $x\to\infty$).

But again since $v$ has a regularly varying tail, we have that 
$\PP(v \in [x_n, x_n+n^\beta]|v \geq v_{\min})$ goes to 0 as $n\to\infty$
uniformly in $x_n$ and in $v_{\min}>n^{\beta'}/2$, since $n^{\beta}=o(n^{\beta'})$. 

Hence for some function $\epsilon(n)$ tending to 0 as $n\to\infty$,
we have that with high probability as $n\to\infty$,
\[
\PP(v_\bare\in [x_n, x_n+n^\beta] | \cA) < \epsilon(n) \text{ for all } x_n,
\]
and hence also with high probability
\[
\PP(w_{0,n}\in [y_n, y_n+n^\beta] | \cA) < \epsilon(n) \text{ for all } y_n.
\]

Now we can average over $\cA$, to give that for any sequence $y_n$,
the unconditional distribution of $w_{0,n}$ satisfies
\begin{equation*}
\PP(w_{0,n}\in [y_n, y_n+n^\beta]) \to 0 \text{ as } n\to\infty.
\end{equation*}
as required. 
In particular, with $s<3$ this implies that $\var(w_{0,n})$ grows faster than $n$, 
and that no central limit theorem such as that in Theorem \ref{CLT} 
can hold (even for single values of $t$). 
\end{proof}

Now we want to present two examples that show that for $s=3$ both critical cases are possible: it might happen that the longest edge is $o(\sqrt{n})$ and it is possible that the longest edge satisfies $\frac{\ell_n}{\sqrt{n}} \rightarrow \infty$ in probability as $n \rightarrow \infty$. Let us first look at the case where $f(x) = \frac{1}{\log x}$. Then we have $\mathbb{E} \left[ v^3 \right] = \infty$.
\begin{example}
With $\mathbb{P} \left[ v > k \right] = \frac{1}{k^3 \log k}$ we have $\mathbb{E} \left[ v^3 \right] = \infty$ since $\int_{2}^{\infty} \frac{1}{x \log x} dx = \infty$, but on the other hand (again using (\ref{asymptotic3}))
\begin{align*}
\mathbb{E} \left[ N_{\frac{1}{2}} \right]
&\leq n \sum_{k=\sqrt{n}}^{n} \frac{1}{(Mk)^3 \log Mk} \\
&\leq const \cdot \frac{1}{\log \left( \sqrt{n} \right)} \\
&\xrightarrow[n \rightarrow \infty]{} 0
\end{align*}
So in this case we have that although $\mathbb{E} \left[ v^3 \right] = \infty$ we will not see edges of length $\sqrt{n}$.
\end{example}

However, if $f$ is increasing and such that $\mathbb{E} \left[ N_{\frac{1}{2}} \right] \xrightarrow[n \rightarrow \infty]{} \infty$ then we have $\frac{\ell_n}{\sqrt{n}} \rightarrow \infty$ in probability by the same arguments as in the proof of (\ref{upperlower}) in Lemma \ref{longestedge}. An example is the case where and $f(x) = \log x$:

\begin{example}
Let $\mathbb{P} \left[ v > k \right] = \frac{\log k}{k^3}$. Then the expected number of edges of at least length $\sqrt{n} \log \log n$ and weight at least $M$ times their length is bounded from below by
\begin{align*}
\sum_{k = \sqrt{n} \log \log n}^{\frac{n}{2}} \frac{n}{2} \mathbb{P} \left[ v > Mk \right]
&\geq const \cdot n \left( \sqrt{n} \log \log n \right)^{-2} \cdot \log \left( \sqrt{n} \log \log n \right) \\
&= const \cdot \frac{\frac{1}{2} \log{n} + \log \log \log n}{ \left( \log \log n \right)^2} \\
&\xrightarrow[n \rightarrow \infty]{} \infty
\end{align*}
By the same arguments as for the lower bound in the proof of (\ref{upperlower}) in Lemma \ref{longestedge} we have that with positive probability we will use an edge of length $\sqrt{n} \log \log n$, so $\frac{\ell_n}{\sqrt{n}} \rightarrow \infty$ in probability.
\end{example}

\subsection{Non-constant edge probabilities}

In this section we want to discuss briefly the situation in which the
probabilities that edges are present are not given by a constant $p
\in \left. \left( 0,1 \right. \right]$, but by a sequence $\left( p_i
\right)_{i \geq 1}$ where $p_i$ is the probability that an edge of
length $i$ is present. In the case with constant edge weights this
situation was analyzed in \cite{denisovfosskonstantopoulos}, and it can be
extended to our case as follows. As in \cite{denisovfosskonstantopoulos}
we need the following two conditions:
\begin{align}
&[C1] \; 0 < p_1 < 1 \notag \\
&[C2] \; \sum_{k=1}^{\infty} \left( 1 - p_1 \right) \ldots \left( 1 - p_k \right) < \infty. \notag
\end{align}
Under these conditions the set of strongly connected points is almost surely infinite and the set of strongly connected points forms a stationary renewal process. Since this is all we needed to establish that the set of renewal points is almost surely infinite, conditions $[C1]$ and $C[2]$ are sufficient to get that
$$ \cR \text{ is almost surely an infinite set}.$$
However, in the proof of the strong law of large numbers and the central limit theorem above, we used that the strongly connected points $\tau_i$ have exponential moments. This is in general no longer the case if we replace the constant $p$ by a sequence $\left( p_i \right)_{i \in \mathbb{N}}$. In \cite{denisovfosskonstantopoulos} it was proven that $\mathbb{E} \left[ \tau_0 \right] < \infty$ if the condition
\begin{align}
&[C3] \; \sum_{k=1}^{\infty} k \left( 1 - p_1 \right) \ldots \left( 1 - p_k \right) < \infty \notag
\end{align}
holds. In the proofs of Propositions \ref{Enu} and \ref{Emu} we used that certain errors, see (\ref{m-x}) and (\ref{taum-x}), decay exponentially because the $\tau_i$ had exponential moments. If we are however only interested in showing that $\Gamma_0$ has a finite first moment, then it is enough if these errors decay fast enough to give us finite first moments of $\nu$ and $\mu$ conditioned on $\mu < \infty$. For these errors to decay fast enough it is sufficient to have two moments of $\tau_1 - \tau_0$ and for this it is enough to have $\mathbb{E} \left[ \tau_0 \right] < \infty$. So under $[C1]$, $[C2]$, $[C3]$ and the condition $\mathbb{E} \left[ v^3 \right] < \infty$ for the weights, we still get a SLLN and a CLT for the weight $w_{0,n}$. This agrees with the results in \cite{denisovfosskonstantopoulos}: if the weights are constant then conditions $[C1]$, $[C2]$ and $[C3]$ give us a SLLN and CLT.

\section{Proofs for the model with $\mathbb{E} \left[ v^2 \right] = \infty$}

In this section the weights have a distribution which does not have a second moment, i.e. $\mathbb{E} \left[v^{2}\right] = \infty$. We want to prove Theorems \ref{main1} and \ref{main2}. Again, we start with the case $p=1$ (Theorem \ref{main1}) and then look at the case $p<1$ (Theorem \ref{main2}). The proofs follow closely those in \cite{hamblymartin} where analogous results for directed last-passage percolation in two dimensions were established.

\subsection{Proof of Theorem \ref{main1}}
\begin{proof}[Proof of Theorem \ref{main1}:]
  To prove Theorem \ref{main1} we use approximations of $w_{0,n}$ and
  $w$ that use only the $k$ largest weights. We define
  \begin{equation} \notag \mathcal{C}^k = \left\{ A \subset \left\{
        1,2,\ldots,k \right\} : Y_i \sim Y_j \text{ for all } i,j \in
      A \right\}
\end{equation}
and
\begin{equation} \notag \mathcal{C}_{0,n}^k = \left\{ A \subset
    \left\{ 1,2,\ldots,k \wedge \binom{n+1}{2} \right\} : Y_i^{(n)} \sim Y_j^{(n)}
    \text{ for all } i,j \in A \right\}
\end{equation}
and put
\begin{equation} \notag
w^k = \sup_{A \in \mathcal{C}} \sum_{i \in A, i \leq k} M_i,
\end{equation}
\begin{equation} \notag
w_{0,n}^k = \sup_{A \in \mathcal{C}_{0,n}} \sum_{i \in A, i \leq k} M_i^{(n)}.
\end{equation}
We also define appropriately rescaled versions 
\begin{align*}
\wtw^k_{0,n}&=\frac{w^k_{0,n}}{b_n},
\\
\wtw_{0,n}&=\frac{w_{0,n}}{b_n}.
\end{align*}
The tails of $w$ and
$w_{0,n}$ are bounded by
\begin{equation} \notag S^k = \sup_{A \in \mathcal{C}} \sum_{i \in A,
    i > k} M_i \; \text{ and } \; S^k_{0,n} = \sup_{A \in
    \mathcal{C}_{0,n}} \sum_{i \in A, i > k} M_i^{(n)}.
\end{equation}
The following Lemma implies that $w$ is almost surely finite and that
$w^k \rightarrow w$ for $k \rightarrow \infty$.
\begin{lemma}
  With probability 1 we have $S^k < \infty$ for all $k \geq 0$ and
  $S^k \rightarrow 0$ for $k \rightarrow \infty$.
\end{lemma}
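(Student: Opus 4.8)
The plan is to exploit that a compatible family consists of pairwise disjoint subintervals of $[0,1]$, so that only boundedly many of its edges can be long, together with the asymptotics $M_i\approx i^{-1/s}$ coming from the law of large numbers. Write $|Y_i|=\max(U_i,V_i)-\min(U_i,V_i)$ for the length of the $i$th edge, and for $j\geq0$ set $\mathcal{I}_j=\{i\geq1:2^{-j-1}<|Y_i|\leq2^{-j}\}$. Any $A\in\mathcal{C}$ can contain at most $2^{j+1}$ indices from $\mathcal{I}_j$, and since $M_i=(W_1+\dots+W_i)^{-1/s}$ is non-increasing in $i$, the contribution to $\sum_{i\in A,\,i>k}M_i$ coming from indices in $\mathcal{I}_j$ is at most $T_j^k$, the sum of the $2^{j+1}$ largest values of $M_i$ over $i\in\mathcal{I}_j\cap(k,\infty)$. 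Hence $S^k\leq\sum_{j\geq0}T_j^k$, and the whole statement reduces to estimating the $T_j^k$. Note that $T_j^k\leq T_j^0=:T_j$ for every $k$ (restricting to indices $>k$ only discards the largest weights), and that each index counted in $T_j^k$ exceeds $k$, so $T_j^k\leq 2^{j+1}M_{k+1}$.

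First I would record two almost sure facts. By the strong law of large numbers $(W_1+\dots+W_i)/i\to1$, hence $c_*:=\inf_{i\geq1}(W_1+\dots+W_i)/i>0$ a.s., so $M_i\leq c_*^{-1/s}i^{-1/s}$ for all $i$. Second, the lengths $|Y_i|$ are i.i.d.\ with $\PP(|Y_i|>x)=(1-x)^2$, so $q_j:=\PP(i\in\mathcal{I}_j)$ satisfies $2^{-j-2}\leq q_j\leq2^{-j}$; in particular $q_j>0$, so $\mathcal{I}_j$ is a.s.\ infinite. Enumerate $\mathcal{I}_j$ as $\iota_j(1)<\iota_j(2)<\cdots$; then $\{\iota_j(m)\leq t\}$ is the event that a $\mathrm{Binomial}(\lfloor t\rfloor,q_j)$ variable is at least $m$. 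Taking $t=m2^j/j^2$ makes its mean at most $m/j^2$, so a Chernoff bound gives $\PP(\iota_j(m)\leq m2^j/j^2)\leq(e/j^2)^m$; summing over $1\leq m\leq2^{j+1}$ and then over $j$ shows $\sum_j\PP\big(\exists\,m\leq2^{j+1}:\iota_j(m)\leq m2^j/j^2\big)<\infty$. The crucial point here is that $q_j$ is of order $2^{-j}$, which is exactly what makes these error probabilities summable. By Borel--Cantelli, a.s.\ there is a random $J$ such that for all $j\geq J$ we have $\iota_j(m)>m2^j/j^2$ for every $1\leq m\leq2^{j+1}$.

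On this event, for $j\geq J$,
\[
T_j\leq c_*^{-1/s}\sum_{m=1}^{2^{j+1}}\big(m2^j/j^2\big)^{-1/s}
= c_*^{-1/s}\,j^{2/s}\,2^{-j/s}\sum_{m=1}^{2^{j+1}}m^{-1/s}.
\]
Then I would split into cases according to $s\in(0,2)$: if $s<1$ the inner sum is bounded by $\zeta(1/s)$; if $s=1$ it is of order $j$; if $1<s<2$ it is of order $2^{j(1-1/s)}$. In the first two cases $T_j$ is bounded by a polynomial in $j$ times $2^{-j/s}$, and in the third by a polynomial in $j$ times $2^{-j(2/s-1)}$, where $2/s-1>0$ precisely because $s<2$. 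In all cases $\sum_{j\geq J}T_j<\infty$ a.s.; together with the finitely many a.s.-finite terms $T_j$, $j<J$, this gives $S^k\leq\sum_jT_j<\infty$ a.s.\ for every $k\geq0$, and in particular $w=S^0<\infty$ a.s.

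Finally, $S^k$ is non-increasing in $k$ and non-negative, hence converges a.s.; to identify the limit as $0$, use $T_j^k\leq2^{j+1}M_{k+1}\to0$ as $k\to\infty$ for each fixed $j$ (since $M_{k+1}=(W_1+\dots+W_{k+1})^{-1/s}\to0$ a.s.), together with the domination $T_j^k\leq T_j$ and $\sum_jT_j<\infty$; dominated convergence in the sum over $j$ gives $\sum_jT_j^k\to0$, hence $S^k\to0$ a.s. The main obstacle is the uniform lower bound on $\iota_j(m)$ in the second step: a plain application of the law of large numbers to the thinned sequence of $\mathcal{I}_j$-indices only controls $\iota_j(m)$ for $m$ beyond a $j$-dependent threshold, whereas it is exactly the small-$m$ terms that are dangerous (the weight $M_i$ has infinite mean for small $i$ when $s\leq1$), which is why the quantitative Chernoff-plus-Borel--Cantelli estimate is needed; and it is there that the hypothesis $s<2$ enters, to make the final series in $j$ converge.
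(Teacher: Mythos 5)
Your proof is correct, and it takes a genuinely different route from the paper's. The paper controls the tail $S^k$ via the quantity $\Lambda_i = \sup_{A\in\mathcal{C}}|A\cap\{1,\dots,i\}|$, compares it to the length $L_i$ of the longest increasing subsequence of a uniform random permutation (the corresponding quantity in two-dimensional LPP), invokes the classical bound $\EE[L_i]\leq c\sqrt{i}$, and then shows $\EE[U_k]<\infty$ for the Abel-summed majorant $U_k=\sum_{i>k}\Lambda_i(M_i-M_{i+1})$, a first-moment argument. You instead exploit the geometry directly: a compatible family is a collection of disjoint subintervals of $[0,1]$, so at most $2^{j+1}$ of its edges can have length exceeding $2^{-j-1}$; combining the dyadic decomposition $\mathcal{I}_j$, the SLLN-derived pathwise bound $M_i\leq c_*^{-1/s}i^{-1/s}$, and a Chernoff--Borel--Cantelli estimate on the indices $\iota_j(m)$ then gives $\sum_j T_j<\infty$ almost surely. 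Your argument is self-contained (no appeal to Ulam's problem or external LIS bounds), works pathwise rather than in expectation, and makes transparent exactly where the hypothesis $s<2$ is used (to make $2^{j(1-2/s)}$ summable). The paper's version is shorter given the imported result and also yields a finite first moment for the tail majorant, which a purely almost-sure argument does not. One small remark: at $j=0$ the expression $m2^j/j^2$ is undefined, and for $j=1,2$ the factor $e/j^2$ may be $\geq 1$; this does no harm since Borel--Cantelli only needs the bound for all sufficiently large $j$ and the finitely many remaining $T_j$ are each a.s.\ finite, but it would be cleaner to restrict the Chernoff estimate explicitly to $j\geq 3$ from the outset.
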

\begin{proof}
  Define $\Lambda_i = \sup_{A \in \mathcal{C}} | A \cap
  \left\{1,\ldots,i \right\} |$. This is the largest number of the edges
  $Y_1,\ldots,Y_i$ that can be included simultaneously in an admissible path. This is
  independent of the weights $\left( M_i \right)_{i \in
    \mathbb{N}}$. In the two-dimensional case in \cite{hamblymartin}
  the corresponding random variable $L_i$ had the distribution of the
  length of the longest increasing subsequence of a random permutation
  of the set $\left\{1,\ldots,i\right\}$. In our case the distribution
  is slightly different, but we get the same asymptotic behaviour and
  the same bounds that we need to prove the Lemma. In the
  two-dimensional case two points $(i,j),(i',j')$ are compatible if
\begin{itemize}
 \item $i \leq i'$ and $j \leq j'$ or $i' \leq i$ and $j' \leq j$
\end{itemize}
In our case two edges (represented by two points in $[0,1]^2$) are compatible if
\begin{itemize}
\item $i \leq \min(i',j')$ and $j \leq \min(i',j')$ or $i' \leq
  \min(i,j)$ and $j' \leq \min(i,j)$ (under the condition $i < j$ and
  $i' < j'$ this is equivalent to $j \leq i'$ or $j' \leq i$)
\end{itemize}
We can see that the second condition is more restrictive. Therefore,
any path that is admissible in our model is also admissible in the
two-dimensional model. If we therefore look at the largest number of
points $Y_1,\ldots,Y_i$ that we can include in an admissible path in
the two models we get that
\begin{equation}\label{Llambda} 
L_i \geq \Lambda_i \text{ a.s.} 
\end{equation}
Since we have $\mathbb{E} \left[ L_i \right] \leq c \sqrt{i}$,
$\mathbb{E} \left[ L_i^2 \right] \leq ci$ and
$\frac{L_i}{i^{\frac{1}{s}}} \xrightarrow[]{d} 0$, for $i \rightarrow
\infty$ and $s \in (0,2)$, we get the same results for the
corresponding variable $\Lambda_i$:
$$ \mathbb{E} \left[ \Lambda_i \right] \leq c \sqrt{i}, \; 
\mathbb{E} \left[ \Lambda_i^2 \right] \leq ci, \; \frac{\Lambda_i}{i^{\frac{1}{s}}} 
\xrightarrow[]{d} 0 \text{ for } i \rightarrow \infty \text{ and } s \in (0,2). $$
Indeed, $\Lambda_i$ has been studied in its own right as the
``independence number of a random interval graph'';
see for example \cite{JusSchWin} and \cite{BouFer},
where, among other things, a central limit theorem and large deviations principle 
are obtained.

The difference between $L_i$ and $\Lambda_i$ is the main difference
between our model and the two-dimensional nearest-neighbour last-passage 
percolation model. 
Since we have the bound (\ref{Llambda}), we can follow the proof 
of Lemma
3.1 in \cite{hamblymartin}: put $U_k = \sum_{i=k+1}^{\infty} \Lambda_i
\left( M_i - M_{i+1} \right)$ and for fixed $A \in \mathcal{C}$ define
$R_i = | A \cap \left\{1,\ldots,i \right\} |$. Then
\begin{align*}
  \sum_{i \in A, i > k} M_i
  &= \lim_{n \rightarrow \infty} \sum_{i \in A, k < i \leq n} M_i \displaybreak[0] \\
  &= \lim_{n \rightarrow \infty} 
\sum_{i = k+1}^{n} M_i \mathbbm{1}_{ \left\{ i \in A \right\} } \displaybreak[0] \\
  &= \lim_{n \rightarrow \infty} \sum_{i = k+1}^{n} M_i \left( R_i - R_{i-1} \right) \displaybreak[0] \\
  &= \lim_{n \rightarrow \infty} 
\left[ - M_{k+1}R_k + \sum_{i = k+1}^{n-1} R_i \left( M_i - M_{i+1} \right) + M_n R_n \right] \displaybreak[0] \\
  &\leq \lim_{n \rightarrow \infty} 
\sum_{i = k+1}^{n-1} R_i \left( M_i - M_{i+1} \right) 
+ \liminf_{n \rightarrow \infty} M_n R_n \displaybreak[0] \\
  &\leq \lim_{n \rightarrow \infty} \sum_{i = k+1}^{n-1} \Lambda_i 
\left( M_i - M_{i+1} \right) + \liminf_{n \rightarrow \infty} M_n \Lambda_n \displaybreak[0] \\
  &= U_k + \liminf_{n \rightarrow \infty} M_n \Lambda_n \displaybreak[0] \\
  &= U_k
\end{align*}
Therefore, we have $S^k \leq U_k$ for all $k$ and it suffices to show
that $U_k \rightarrow 0$ as $k \rightarrow \infty$. Since $U_k$ is the
remainder of an infinite sum, it is actually enough to show that $U_k
< \infty$ almost surely. By the independence of $\left( \Lambda_i
\right)$ and $\left( M_i \right)$ we get that
\begin{align*}
  \mathbb{E} \left[ U_k \right]
  &= \sum_{i=k+1}^{\infty} \mathbb{E} 
\left[ \Lambda_i \right] \left( \mathbb{E} \left[ M_i \right] - \mathbb{E} 
\left[ M_{i+1} \right] \right) \\
  &\leq \sum_{i=k+1}^{\infty} c \sqrt{i} \left( \mathbb{E} \left[ M_i
    \right] - \mathbb{E} \left[ M_{i+1} \right] \right)
\end{align*}
Now we can use the known distribution of the $M_i$ ($M_i$ has the
distribution of $\left(V_i\right)^{-\frac{1}{s}}$ where $V_i \sim
\textrm{Gamma}(i,1)$) to get
$$ \mathbb{E} \left[ U_k \right] \leq \frac{c}{s} 
\sum_{i=k+1}^{\infty} \sqrt{i} \left( i - \frac{1}{s} - 1 \right)^{-\frac{1}{s}} $$
(see Lemma 3.1 of \cite{hamblymartin} for the details). The last sum is
finite for all $k > \frac{1}{s}$ and it follows that $U_k < \infty$
almost surely for all $k$.
\end{proof}
Theorem \ref{main1} then follows from the next two Propositions which
were proved in \cite{hamblymartin} (Propositions 3.2 and 3.3). The
proofs are almost identical and rely on the two following facts:
\begin{itemize}
\item The distribution of the weights $M_i$ in \cite{hamblymartin} and
  in our paper is exactly the same
\item The definition of compatible points/edges is slightly different,
  but such that $\Lambda_i \leq L_i$
\end{itemize}
\begin{proposition} \label{prop1} Let $\varepsilon > 0$ and $k$ be
  fixed. Then for all sufficiently large $n$ there exists a coupling
  of the continuous and the discrete model indexed by $n$ such that
  \begin{equation} \notag \mathbb{P} \left[ \sum_{i=1}^k \left| M_i -
        \widetilde{M}_i^{(n)} \right| > \varepsilon \right] \leq
    \varepsilon,
\end{equation}
\begin{equation} \label{euclid}
\mathbb{P} 
\left[ \sum_{i=1}^k \left\| Y_i - Y_i^{(n)} \right\| > \varepsilon \right] \leq \varepsilon,
\end{equation}
\begin{equation} \notag
\mathbb{P} \left[ \mathcal{C}^k_{0,n} \neq \mathcal{C}^k \right] \leq \varepsilon.
\end{equation}
\end{proposition}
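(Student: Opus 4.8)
The plan is to deduce Proposition~\ref{prop1} from the two convergence statements (\ref{Yeq}) and (\ref{Meq}) by means of the Skorokhod representation theorem, after first observing that the combinatorial object $\mathcal{C}^k$ depends on the edge locations in a way that is almost surely locally constant at the limiting configuration.

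First I would record that in both models the edge locations are independent of the edge weights: in the continuous model $(U_i,V_i)_{i\ge1}$ is independent of $(W_k)_{k\ge1}$, and in the discrete model the uniformly chosen ordering of the edges is independent of the weight order statistics $(M_i^{(n)})$. Since a product of weakly convergent laws converges to the product of the limits, (\ref{Yeq}) and (\ref{Meq}) combine to give
\[
\left(\widetilde M_1^{(n)},\dots,\widetilde M_k^{(n)},Y_1^{(n)},\dots,Y_k^{(n)}\right)
\xrightarrow[]{d}
\left(M_1,\dots,M_k,Y_1,\dots,Y_k\right)
\]
in $\mathbb{R}^k\times([0,1]^2)^k$ as $n\to\infty$. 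By Skorokhod's theorem we may realise the finite-dimensional data $(\widetilde M_i^{(n)},Y_i^{(n)})_{i\le k}$ for all $n$, together with $(M_i,Y_i)_{i\le k}$, on one probability space so that this convergence is almost sure; all three bounds will be verified for this single coupling.

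On the common space the first two bounds are immediate, because $\sum_{i\le k}|M_i-\widetilde M_i^{(n)}|\to0$ and $\sum_{i\le k}\|Y_i-Y_i^{(n)}\|\to0$ almost surely, so each of these quantities exceeds $\varepsilon$ with probability tending to $0$ and is therefore $\le\varepsilon$ for all large $n$. For the third bound, write $Y_i=(a_i,b_i)$ with $a_i=\min(U_i,V_i)<b_i=\max(U_i,V_i)$. As $U_1,V_1,\dots,U_k,V_k$ are i.i.d.\ uniform, the $2k$ numbers $\{a_i,b_i:1\le i\le k\}$ are almost surely pairwise distinct; let $\eta>0$ be the (random, almost surely positive) minimum gap between them. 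On this event, if $\|Y_i^{(n)}-Y_i\|<\eta/2$ for all $i\le k$, then each coordinate of $Y_i^{(n)}$ lies within $\eta/2$ of the corresponding coordinate of $Y_i$, so every difference $b_i-a_j$ and $b_j-a_i$ keeps its sign when $(Y_i)$ is replaced by $(Y_i^{(n)})$; since in the continuum the defining inequalities of $\sim$ (namely $b_i\le a_j$ or $b_j\le a_i$) are strict whenever they hold, it follows that $Y_i^{(n)}\sim Y_j^{(n)}$ if and only if $Y_i\sim Y_j$ for all $i,j\le k$, hence $\mathcal{C}^k_{0,n}=\mathcal{C}^k$. Since $\|Y_i^{(n)}-Y_i\|\to0$ a.s., this equality holds for all large $n$, so $\PP[\mathcal{C}^k_{0,n}\ne\mathcal{C}^k]\to0$ and is $\le\varepsilon$ eventually.

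Finally, as only three ``bad'' events are involved and each has probability tending to $0$, a single threshold $N=N(\varepsilon,k)$ can be chosen beyond which all three bounds hold for the coupling indexed by $n$, which is the assertion of Proposition~\ref{prop1}. This reproduces the proof of Proposition~3.2 in \cite{hamblymartin}, the only change being the more restrictive compatibility relation, which does not affect the structure of the argument. I expect the step needing most care to be the stability of $\mathcal{C}^k$ — verifying that the map from edge locations to compatibility pattern is continuous at the limiting point — which rests on the general-position fact that the $2k$ interval endpoints are almost surely distinct, a fact that also disposes of the borderline case distinguishing ``$\le$'' from ``$<$'' in the definition of $\sim$.
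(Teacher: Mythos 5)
Your argument is correct and follows exactly the route indicated by the paper's brief ``Sketch of the Proof'': the first two bounds come from the convergences (\ref{Yeq}) and (\ref{Meq}) (you make this precise via independence of locations and weights plus Skorokhod), and the third comes from the stability of the compatibility pattern under small perturbations, which you justify by the general-position observation that the $2k$ interval endpoints of $Y_1,\dots,Y_k$ are a.s.\ distinct, so there is a positive gap $\eta$ within which all the strict inequalities defining $\sim$ are preserved.
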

Here we use the Euclidean distance in $\mathbb{Z}^2$ as distance
between two edges $Y_1 = (a,b)$ and $Y_2 = (a',b')$ in (\ref{euclid}).
\begin{proof}[Sketch of the Proof:]
  The first two statements follow straightforwardly from the convergence stated in
  (\ref{Yeq}) and (\ref{Meq}). The
  last statement follows from the fact that with high probability, a small perturbation of
  the $Y_i$ does not affect the ordering of the points.
\end{proof}
\begin{proposition} \label{prop2} Let $\varepsilon > 0$. Then for
  sufficiently large $k$ and $\widetilde{S}^k_{0,n} =
  \frac{S^k_{0,n}}{b_n}$,
\begin{equation} \notag
\mathbb{P} \left[ \widetilde{S}^k_{0,n} > \varepsilon \right] \leq \varepsilon
\end{equation}
for all $n$.
\end{proposition}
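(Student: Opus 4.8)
The plan is to mirror the Abel-summation argument used above for the continuous model, keeping track of all constants so that none of them depend on $n$. Write $N=\binom{n+1}{2}$ for the number of edges in $[0,n]$ and set $M^{(n)}_{N+1}=0$. Let $\Lambda^{(n)}_i=\sup_{A\in\mathcal C_{0,n}}|A\cap\{1,\dots,i\}|$ be the largest number of the first $i$ sampled edges $Y^{(n)}_1,\dots,Y^{(n)}_i$ that can simultaneously lie on an admissible path. Since the random ordering of the edges is independent of the ordered weights, $\Lambda^{(n)}_i$ is independent of $(M^{(n)}_i)_i$, and by the comparison $\Lambda^{(n)}_i\le L^{(n)}_i$ with the two-dimensional quantity, together with the classical bound for the independence number of a random interval graph, we have $\mathbb{E}\Lambda^{(n)}_i\le c\sqrt i$ with $c$ not depending on $n$.

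First I would run exactly the summation by parts that gave $S^k\le U_k$ in the continuous case: for any $A\in\mathcal C_{0,n}$, using $M^{(n)}_i=\sum_{j\ge i}(M^{(n)}_j-M^{(n)}_{j+1})$ and $|A\cap\{k+1,\dots,j\}|\le\Lambda^{(n)}_j$,
\[
\sum_{i\in A,\,i>k}M^{(n)}_i=\sum_{j=k+1}^{N}(M^{(n)}_j-M^{(n)}_{j+1})\,|A\cap\{k+1,\dots,j\}|\le\sum_{j=k+1}^{N}(M^{(n)}_j-M^{(n)}_{j+1})\,\Lambda^{(n)}_j=:U^{(n)}_k ,
\]
so $S^k_{0,n}\le U^{(n)}_k$ and hence $\widetilde S^k_{0,n}\le U^{(n)}_k/b_n$ (and if $N\le k$ the sum is empty, so $\widetilde S^k_{0,n}=0$). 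By independence and $\mathbb{E}\Lambda^{(n)}_j\le c\sqrt j$,
\[
\mathbb{E}\bigl[\widetilde S^k_{0,n}\bigr]\le\frac{c}{b_n}\sum_{j=k+1}^{N}\sqrt j\,(\mathbb{E} M^{(n)}_j-\mathbb{E} M^{(n)}_{j+1}),
\]
and by Markov's inequality it suffices to make the right-hand side smaller than $\varepsilon^2$ by choosing $k$ large, uniformly in $n$.

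The heart of the proof is a uniform-in-$n$ bound on the normalised order statistics: for every small $\delta>0$ there are $C_\delta$ and $k_\delta$, not depending on $n$, with $\mathbb{E} M^{(n)}_j\le C_\delta\,b_n\,j^{-1/s+\delta}$ for all $k_\delta\le j\le N$. I would obtain this from R\'enyi's representation: writing $U(t)=F^{(-1)}(1-1/t)$ for the tail-quantile function, which is regularly varying of index $1/s$ and satisfies $b_n=U(N)$, one has $M^{(n)}_j\stackrel d=U(G_{N+1}/G_j)$ with $G_i=W_1+\dots+W_i$ and the $W_i$ i.i.d.\ $\mathrm{Exp}(1)$. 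Feeding the decomposition $G_{N+1}=G_j+(G_{N+1}-G_j)$ into independent summands, Potter's bounds for $U(Nx)/U(N)$ applied with $x=G_{N+1}/(NG_j)$, and standard moment bounds for Gamma variables, into $\mathbb{E}[U(G_{N+1}/G_j)]/U(N)$ gives the estimate for $k_\delta\le j\le N/2$; the range $N/2\le j\le N$ then follows from it by monotonicity of $j\mapsto M^{(n)}_j$. (The restriction $j\ge k_\delta>1/s$ is both necessary and harmless: exactly as in the continuous model $\mathbb{E} M^{(n)}_j=\infty$ for $j\le 1/s$, which is why $k$ must be taken at least this large.) Granted this bound, one further summation by parts, using $\sqrt j-\sqrt{j-1}\le 1/\sqrt j$, yields
\[
\frac{1}{b_n}\sum_{j=k+1}^{N}\sqrt j\,(\mathbb{E} M^{(n)}_j-\mathbb{E} M^{(n)}_{j+1})\le C_\delta\Bigl(\sqrt{k+1}\;k^{-1/s+\delta}+\sum_{j>k}j^{-1/s-1/2+\delta}\Bigr);
\]
since $s\in(0,2)$ we may fix $\delta<1/s-\tfrac12$, and then both the boundary term $k^{1/2-1/s+\delta}$ and the tail of the convergent series $\sum j^{-(1/s+1/2-\delta)}$ tend to $0$ as $k\to\infty$, uniformly in $n$, which completes the argument.

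The main obstacle is precisely this uniform moment bound on $\mathbb{E} M^{(n)}_j$. In the continuous model it came for free from the exact identity $\mathbb{E} M_j=\Gamma(j-1/s)/\Gamma(j)$, whereas here only regular variation of $F$ is available, so the exact computation must be replaced by uniform extreme-value estimates. The delicate points are the behaviour near $j\asymp N$ (the smallest order statistics, where Potter's bounds degrade and the slowly varying correction in $b_n=N^{1/s}L(N)$ may itself tend to $0$), which is why that range is routed through monotonicity, and checking that the exceptional contributions coming from $G_j$ being atypically large or $G_{N+1}-G_j$ being atypically small are negligible — which they are, being exponentially small in $N\ge j$ and hence easily summable against $j^{-1/s}$.
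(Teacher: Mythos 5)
Your argument is correct and follows essentially the same route the paper takes: the paper outsources the proof to Section~3.2 of Hambly--Martin and says the transfer rests on (i) the identical distribution of the normalised order statistics and (ii) the comparison $\Lambda_i \le L_i$, which are precisely the two ingredients you use in the Abel-summation step and in the uniform-in-$n$ moment bound on $\mathbb{E} M^{(n)}_j / b_n$. You have simply reconstructed the details of that transfer, and I see no gaps; the choice $\delta < 1/s - \tfrac12$ is available exactly because $s\in(0,2)$, and routing the range $j \asymp N$ through monotonicity correctly handles the region where Potter's bounds degrade.
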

\begin{remark}
  A detailed proof of Proposition \ref{prop2} can be found in
  Section 3.2 of \cite{hamblymartin}. The transfer of the proof to our situation
  follows again from the two facts stated before Proposition
  \ref{prop1}.
\end{remark}
We can then write
\begin{align*}
  \left| w - \wtw_{0,n} \right|
  &= \left| \left( w - w^{k_n} \right) + \left( w^{k_n} - \widetilde{w}^{k_n}_{0,n} \right) 
+ \left( \widetilde{w}^{k_n}_{0,n} - \widetilde{w}_{0,n} \right) \right| \\
  &\leq S^{k_n} + \left| w^{k_n} - \widetilde{w}^{k_n}_{0,n} \right| +
  \widetilde{S}^{k_n}_{0,n}
\end{align*}
and for some suitable sequence $k_n$ we have that the first and last
term tend to $0$ in probability. We also have that on
$\mathcal{C}^{k_n}_{0,n} = \mathcal{C}^{k_n}$,
\begin{equation} \notag \left| w^{k_n} - \widetilde{w}^{k_n}_{0,n}
  \right| \leq \sum_{i=1}^{k_n} \left| M_i - \widetilde{M}_i^{(n)}
  \right|
\end{equation}
holds. Since $\mathbb{P} \left[ \mathcal{C}^k_{0,n} \neq \mathcal{C}^k
\right] \rightarrow 0$ and $\sum_{i=1}^{k_n} \left| M_i -
  \widetilde{M}_i^{(n)} \right| \rightarrow 0$ in probability,
we have that $\wtw_{0,n}\to w$ as required for Theorem 
\ref{main1}.
\end{proof}

\subsection{Proof of Theorem \ref{main2}}
The proof in the case $p<1$ goes through in an essentially identical way, 
after making a couple of appropriate observations.

First, the number of edges in the interval $[0,n]$ 
is no longer $\binom{n+1}{2}$, but is now 
a Binomial$\left(\binom{n+1}{2}, p\right)$ random variable. 
Since under (\ref{F}) we have that
\[
a_{\binom{n+1}{2}}/a_{p\binom{n+1}{2}}\to p^{-1/s} \text{ as } n\to\infty,
\]
it's easy to obtain that equation (\ref{Meq}) generalises for $p\in(0,1]$ to
\begin{equation} \label{Meqp}
p^{-1/s}\left( \widetilde{M}_1^{(n)}, \widetilde{M}_2^{(n)}, \ldots , \widetilde{M}_k^{(n)} \right) 
\xrightarrow[]{d} \left( M_1 , M_2 , \ldots , M_k \right) 
\end{equation}
as $n\to\infty$, so that the asymptotics of the heaviest edges change
simply by a constant factor. 

The second issue concerns the set of feasible paths. 
Since not all edges are present, it is no longer the case that
if $x=(i,j)$ and $y=(i',j')$ are two edges with
$i<j\leq i'<j'$, then $x$ and $y$ can necessarily be used in 
the same path (there may be no feasible path between $j$ and $i'$).

However, if $k$ is fixed and $n\to\infty$, 
then with high probability, any subset of the $k$ heaviest edges
which are compatible with each other in this sense can be used together in a path
(since the minimal distance between the endpoints of two such edges
goes to infinity in probability). Then, since the argument above 
shows that we can obtain an arbitrarily close approximation to $w$ by considering
only the $k$ heaviest edges, the result of Theorem \ref{main2} for $p<1$ 
can be obtained just as before. 

\section*{Acknowledgements}
JM and SF were supported by the EPSRC. PS was supported by a ``DAAD
Doktorandenstipendium'' and the EPSRC. We would like to thank the
Department of Statistics, University of Oxford, the Isaac Newton
Institute for Mathematical Sciences, University of Cambridge, and the
Mathematische Forschungsinstitut Oberwolfach for their support.
We thank the referee for a careful reading of the manuscript and 
for valuable comments and suggestions. 
\bibliographystyle{plain}
\bibliography{LPP31}

\bigskip

\noindent
\begin{minipage}[t]{7cm}
\small \sc
Serguei Foss \\
School of Mathematical Sciences\\
Heriot-Watt University\\
Edinburgh EH14 4AS, UK\\
E-mail: {\tt foss@ma.hw.ac.uk}\\
Institute of Mathematics\\
Novosibirsk, Russia
\end{minipage}
\begin{minipage}[t]{6cm}
\small \sc
James Martin \\
Department of Statistics\\
University of Oxford\\
1 South Parks Road\\
Oxford OX1 3TG\\
E-mail: {\tt martin@stats.ox.ac.uk}
\end{minipage}
\\[5mm]
\begin{minipage}[t]{6cm}
\small \sc
Philipp Schmidt \\
Department of Statistics\\
University of Oxford\\
1 South Parks Road\\
Oxford OX1 3TG\\
E-mail: {\tt schmidt@stats.ox.ac.uk}
\end{minipage}

\end{document}